\numberwithin{equation}{section}
\newcommand{\R}{\mathbb R}
\newcommand{\C}{\mathbb C}
\newcommand{\N}{\mathbb N}
\newcommand{\be}{\begin{equation}}
\newcommand{\ee}{\end{equation}}
\newcommand{\ba}{\begin{eqnarray}}
\newcommand{\ea}{\end{eqnarray}}
\newcommand{\p}{\partial}
\def\e{{\varepsilon}}
\newcommand\mub{\widetilde{\mu}}
\newcommand\bna{\begin{eqnarray*}}
\newcommand\ena{\end{eqnarray*}}
\newcommand\bnan{\begin{eqnarray}}
\newcommand\enan{\end{eqnarray}}
\newcommand\bnp{\begin{proof}}
\newcommand\enp{\end{proof}}
\newcommand\bneq{\begin{eqnarray*}\left\lbrace \begin{array}{rcl}}
\newcommand\eneq{\end{array} \right.\end{eqnarray*}}
\newcommand\bneqn{\begin{eqnarray}\left\lbrace \begin{array}{rcl}}
\newcommand\eneqn{\end{array} \right.\end{eqnarray}}
\newcommand\bni{\begin{itemize}}
\newcommand\eni{\end{itemize}}
\newcommand\nor[2]{\left\|#1\right\|_{#2}}
\newtheorem{theorem}{Theorem}[section]
\newtheorem{proposition}[theorem]{Proposition}
\newtheorem{lemma}[theorem]{Lemma}
\newtheorem{corollary}[theorem]{Corollary}
\theoremstyle{remark}
\newtheorem{definition}{Definition}
\numberwithin{equation}{section}
\begin{document}
\title[Exact controllability of nonlinear heat equations]
{Exact controllability of nonlinear heat equations\\ in spaces of analytic functions}

\author[Laurent]{Camille Laurent}
\address{CNRS, Sorbonne Universit\'e, Laboratoire Jacques-Louis Lions, UMR 7598, Boite courrier 187, 75252, Paris Cedex 05, France}
\email{camille.laurent@upmc.fr}

\author[Rosier]{Lionel Rosier}
\address{Centre Automatique et Syst\`emes (CAS) and Centre de Robotique, MINES ParisTech, PSL Research University, 60 Boulevard Saint-Michel,
 75272 Paris Cedex 06, France}
\email{lionel.rosier@mines-paristech.fr}

\date{}

\maketitle

\begin{abstract}
It is by now well known that the use of Carleman estimates allows to establish the controllability to trajectories of nonlinear parabolic equations. However, by this approach, it is not clear how to decide whether a given function is indeed reachable. In this paper, we pursue the study of the reachable states
of parabolic equations based on a direct approach using control inputs in Gevrey spaces by considering a nonlinear heat equation in dimension one. The nonlinear part  is assumed to be an analytic function of the spatial variable
$x$, the unknown $y$, and its derivative $\partial _x y$.  
By investigating carefully a nonlinear Cauchy problem in the spatial variable and the relationship between the jet of space derivatives and the jet of time derivatives, 
we derive an exact controllability result for small  initial and final data that can be extended as analytic functions on some ball of the complex plane.

\end{abstract}

\vspace{0.3cm}

\textbf{2010 Mathematics Subject Classification: 35K40, 93B05}  

\vspace{0.5cm}

\textbf{Keywords:}  Nonlinear heat equation; viscous Burgers' equation; Allen-Cahn equation; exact controllability; ill-posed problems; Gevrey class; 
reachable states. 


\section{Introduction}

The null controllability of nonlinear parabolic equations is well understood since the nineties. It was 
derived in \cite{GL} in dimension one by solving some ``ill-posed problem'' with Cauchy data in some
Gevrey spaces, and in \cite{imanuvilov,FI} in any dimension and for any control region by using some
``parabolic Carleman estimates''.

The null controllability was actually extended  to the controllability to trajectories in \cite{FI}. However, 
it is a quite hard task to decide whether a given state is the value at some time of a trajectory of the system without control (free evolution). In practice, the only known examples of such states are the steady states. 

As noticed in \cite{MRRreachable}, in the linear case, the steady states are Gevrey functions of order $1/2$ in $x$
(and thus analytic over $\C$) for which infinitely many traces vanish at the boundary, a fact which is also a very conservative 
condition leading to exclude e.g. all the nontrivial polynomial functions.

The recent paper \cite{MRRreachable} used the flatness approach and a Borel  theorem to provide
an explicit set of  reachable states composed of states that can be extended as analytic functions on a ball $B$. 
It was also noticed in \cite{MRRreachable} that any reachable state could be extended as an analytic function on 
a square included in the ball $B$. We refer the reader to \cite{DE,HKT} for new sets of 
reachable states for the linear 1D heat equation, with control inputs chosen in $L^2(0,T)$.           
We notice that the flatness approach applied to the control of PDEs, first developed in \cite{LMR,DPRM,meurer,SMJ}, was revisited recently to 
recover the null controllability of (i) the heat equation in cylinders \cite{MRRheat}; (ii) a family of parabolic equations with unsmooth coefficients 
\cite{MRRparabolic}; (iii) the Schr\"odinger equation \cite{MRRschrodinger}; (iv) the Korteweg-de Vries equation with a 
control at the left endpoint \cite{MRRRkdv}.  One of the main features of the flatness approach is that it provides control inputs developed
as explicit series, which leads to very efficient numerical schemes.   

The aim of the present paper is to extend the results of \cite{MRRreachable} to nonlinear parabolic equations.  Roughly, we shall prove that
a reachable state for the linear heat equation is also reachable for the nonlinear one, provided that its magnitude is not too large and 
its poles and those of the nonlinear term 
are sufficiently far from the origin. The method of proof is inspired by \cite{GL} where a Cauchy problem in the variable $x$ is investigated. The main novelty is that we prove an {\em exact} controllability result (and not only a {\em null} controllability result as in \cite{GL}), and
we need to investigate the influence of the nonlinear terms on the jets of the time derivatives of two 
traces at $x=0$.  
Here, we do not use some series expansions of the control inputs as in the flatness approach, but we still use some Borel theorem as in \cite{petzsche,MRRreachable}.  It is unclear whether the same results could be obtained by the classical approach using the exact controllability of the
linearized system and a fixed-point argument.   

To be more precise, we are concerned with the exact controllability of the following nonlinear heat equation 
\ba
\partial _t y = \partial _x ^2 \, y + f(x , y ,  \partial _x y),&& x\in [-1,1],\ t\in [0,T],  \label{W1}\\ 
y(-1,t)=h_{-1}(t),&& t\in [0,T], \label{W2} \\
y(1,t)=h_1(t),&& t\in [0,T], \label{W3} \\
y(x,0)=y^0(x),&& x\in [-1,1], \label{W4}
\ea
where $f:\R ^3\to \R$ is analytic with respect to all its arguments in a neighborhood of $(0,0,0)$. More precisely, we assume that 
\be
f(x,0,0)=0\quad \forall x\in (-4,4), \label{AB1}
\ee 
and that 
\be
f(x,y_0,y_1)= \sum_{ (p,q,r)\in \N^3} a_{p,q,r} (y_0)^p (y_1)^q x^r  \quad \forall (x,y_0,y_1)\in (-4,4)^3, \label{AB2}
\ee
with
\be
|a_{p,q,r} | \le \frac{M}{ b_0^p b_1^q b_2^r}\qquad  \forall p,q,r\in \N \label{AB3}
\ee
for some constants
\be
M>0 , \quad b_0>4,\  b_1>4, \textrm{ and }  b_2>4. \label{AB4}
\ee
Note that $a_{0,0,r}=0$ for all $r \in \N$ by \eqref{AB1}. For $p,q\in \N$ let  
\[
A_{p,q}(x)=\sum_{r\in \N} a_{p,q,r} x^r, \quad |x|<b_2. 
\]
We infer from \eqref{AB2} and \eqref{AB3} that
\begin{eqnarray*}
f(x,y_0,y_1) &=& \sum_{\tiny\begin{array}{c}p,q\in \N ,\\ p+q>0\end{array}} A_{p,q}(x)(y_0)^p(y_1)^q, \\
|A_{p,q}(x)| &\le& \frac{M}{b_0^pb_1^q} \, \frac{1}{1-\frac{|x|}{b_2}}, \quad |x|<b_2. 
\end{eqnarray*}  
Among the many physically relevant instances of \eqref{W1} satisfying \eqref{AB1}-\eqref{AB4}, we quote:
\begin{enumerate}
\item the {\em heat equation with an analytic potential}:
\[
\partial _t y =\partial _x ^2 y + \varphi (x)y
\]
where $\varphi (x)=\sum_{r\ge 0}a_rx^r$, with $|a_r|\le M/b_2^r$ for all $r\in \N$ and some constants $M>0$, $|b_2|>4$. 
\item the {\em Allen-Cahn equation} 
\[\partial _t y=\partial _x^2 \, y +y-y^3 \] 
\item the {\em viscous Burgers' equation}
\be
\partial _t y=\partial _x^2 \, y  -y\partial  _x y.
\label{ABCD}
\ee
Note that our controllability result is still valid when the nonlinear term $-y\partial _x y$ in \eqref{ABCD} is replaced by a term like
$\varphi (x) y^p (\partial _x y)^q$ with $\varphi $ as in (1), and $p,q\in \N$.   
\end{enumerate}
Because of the smoothing effect, the exact controllability result has to be stated in a space of analytic functions (see \cite{MRRreachable} for the linear heat equation). For given $R>1$ and $C>0$, we denote by ${\mathcal R} _{R,C}$ the set 
\[
{\mathcal R}_{R,C} := \{ y:[-1,1]\to \R; \ \exists (\alpha _n)_{n\ge 0} \in \R ^\N, \ |\alpha _n| \le C \frac{n!}{R^n}\ \ \forall n\ge 0 \textrm{ and  }  
y(x) =\sum_{n=0}^\infty \alpha _n \frac{x^n}{n!} \ \  \forall x\in [-1,1]\}.   
\]
We say that a function $h\in C^\infty ([t_1,t_2])$ is {\em Gevrey of order $s\ge 0$ on $[t_1,t_2]$}, and we write  
$h\in G^s([t_1,t_2])$, if there exist
some positive constants $M,R$ such that 
\[
|\partial _t ^p h(t) | \le M \frac{(p!)^s}{R^p}\quad \forall t\in [t_1,t_2], \ \forall p\ge 0. 
\]
Similarly, we say that a function $y\in C^\infty ([x_1,x_2]\times [t_1,t_2])$ is 
{\em Gevrey of order $s_1$ in $x$ and $s_2$ in $t$}, with $s_1,s_2\ge 0$, and we write $y\in G^{s_1,s_2}([x_1,x_2]\times [t_1,t_2])$, if there exist some positive constants $M,R_1,R_2$ such that 
\[
\vert \partial _x ^{p_1}\partial _t ^{p_2} y(x,t) \vert  \le M \frac{ (p_1!)^{s_1} (p_2!)^{s_2}}{R_1^{p_1} R_2^{p_2}}\quad 
\forall (x,t)\in [x_1,x_2]\times [t_1,t_2], \ \forall (p_1,p_2)\in \N ^2.
\]

The main result in this paper is the following { \em exact controllability} result. 
\begin{theorem}
\label{thm1}
Let $f=f(x,y_0,y_1)$ be as in \eqref{AB1}-\eqref{AB4} with $b_2>\hat R := 4e^{(2e)^{-1}}\approx 4.81$. 
Let $R>\hat R$ and $T>0$. 
Then there exists some number $\hat C >0$ such that for all
 $y^0,y^1\in {\mathcal R}_{R,\hat C}$, there exists $h_{-1},h_1\in G^2([0,T])$ such that the solution $y$ of 
\eqref{W1}-\eqref{W4} is defined for all $(x,t)\in [-1,1]\times [0,T]$ and satisfies $y(x,T)=y^1(x)$ for all  $x\in [-1,1]$. Furthermore, we have that $y\in G^{1,2}([-1,1]\times [0,T])$.      
\end{theorem}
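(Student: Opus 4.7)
The plan is to split $[0,T]$ into three sub-intervals $[0,T/3]$, $[T/3,2T/3]$, and $[2T/3,T]$. On the middle one I take $y \equiv 0$, which is admissible thanks to the hypothesis $f(x,0,0)=0$ from \eqref{AB1}. It then suffices to produce, on each outer sub-interval, a classical $G^{1,2}$ solution of \eqref{W1} matching $y^0$ (resp.\ $y^1$) at one endpoint and identically zero at the other. Since the Cauchy problem used below runs in the spatial variable $x$ rather than in time, no time-irreversibility obstructs us and the two sub-problems are handled by essentially the same argument; I focus on $[0,T/3]$.

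On $[0,T/3]$, the approach, inspired by \cite{GL}, is to prescribe the two traces $\phi_0(t) := y(0,t)$ and $\phi_1(t) := \partial_x y(0,t)$ as Gevrey-$2$ functions, and then reconstruct $y$ by solving a nonlinear Cauchy problem in $x$ with $\phi_0,\phi_1$ as data at $x=0$. Writing the formal expansion $y(x,t) = \sum_{n \ge 0} z_n(t) \, x^n/n!$ with $z_0 = \phi_0$, $z_1 = \phi_1$, the PDE yields the recursion
\[
z_{n+2}(t) \;=\; \dot z_n(t) \;-\; \partial_x^n \bigl[ f(x, y(x,t), \partial_x y(x,t)) \bigr] \big|_{x=0},
\]
whose right-hand side depends only on $z_0(t),\dots,z_{n+1}(t)$, their first time derivatives, and the Taylor coefficients of $f$. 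Careful analytic estimates on this recursion, using the constants $M,b_0,b_1,b_2$ from \eqref{AB3}--\eqref{AB4}, should show that if $\phi_0,\phi_1$ are Gevrey-$2$ with sufficiently small constants, the series converges on $|x| \le 1+\varepsilon$ uniformly in $t$, delivering a classical solution $y \in G^{1,2}([-1,1]\times[0,T/3])$. The boundary controls are then $h_{\pm 1}(t) := y(\pm 1, t) \in G^2([0,T/3])$.

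Choosing $\phi_0$ and $\phi_1$ correctly reduces to a jet-prescription problem at the two endpoints $t=0$ and $t=T/3$. The recursion can be inverted, at any fixed $t_0$, to set up a bijection between the space jet $(z_n(t_0))_{n \ge 0}$ and the time jet $\bigl(\partial_t^k \phi_0(t_0), \partial_t^k \phi_1(t_0)\bigr)_{k \ge 0}$, with quantitative bounds exchanging Gevrey-$1$ control on the former for Gevrey-$2$ control on the latter. The target space jet at $t_0 = 0$ is $\alpha_n := \partial_x^n y^0(0)$, which satisfies $|\alpha_n| \le \hat C\, n!/R^n$ since $y^0 \in \mathcal{R}_{R,\hat C}$; the target at $t_0 = T/3$ is identically zero. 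Given these two Gevrey-$2$ target time jets at the endpoints of $[0,T/3]$, a Gevrey version of the Borel theorem (see \cite{petzsche,MRRreachable}) then produces the desired $\phi_0,\phi_1 \in G^2([0,T/3])$.

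I expect the main obstacle to lie in the nonlinear analytic bookkeeping underlying both directions of the jet correspondence and in the convergence of the series $\sum z_n(t) x^n/n!$ up to $|x|=1$. One must track how the analytic radii $b_0,b_1,b_2$ propagate through the Cauchy--Kowalewska-type recursion and its inverse, and the threshold $\hat R = 4e^{(2e)^{-1}}$ should emerge as the precise value for which the formal power series in $x$ converges on $|x|\le 1$ uniformly in $t$, while simultaneously the inverted recursion maps Gevrey-$1$ space jets to Gevrey-$2$ time jets. Smallness of $\hat C$ is what ultimately closes the fixed-point-type argument used to solve the nonlinear recursion with $z_n(t)$ small in the relevant analytic norm; once these estimates are in hand, assembling the three sub-intervals and extracting the boundary controls $h_{\pm 1}$ is routine.
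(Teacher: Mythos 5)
Your architecture is essentially the one the paper uses: reduce everything to a Cauchy problem in the spatial variable with two $G^2$ traces at $x=0$ (Theorem \ref{thm2}), use the bijection between the space jet and the pair of time jets at $x=0$ (Propositions \ref{prop1} and \ref{prop10}) to translate the conditions $y(\cdot,0)=y^0$ and $y(\cdot,T)=y^1$ into jet conditions on the traces, and realize those jets by a Gevrey Borel theorem. The one genuine difference is organizational: you concatenate three time intervals and pass through the zero state (legitimate, since \eqref{AB1} makes $y\equiv 0$ a solution and the matching at $t=T/3$ is to infinite order once all time jets of the traces vanish there and analyticity in $x$ forces $y(\cdot,T/3)\equiv 0$), whereas the paper constructs two global solutions $\hat y,\tilde y$ on all of $[0,T]$ matching $y^0$ at $t=0$ and $y^1$ at $t=T$ respectively (Proposition \ref{prop100}), blends their traces at $x=0$ with a Gevrey-$3/2$ cutoff, and solves the $x$-Cauchy problem once. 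Your route requires a \emph{two-point} Borel interpolation on $[0,T/3]$, which is not what the standard one-point statement (Lemma \ref{lem4}) gives; it does follow by multiplying the one-point Borel function by a Gevrey-$3/2$ cutoff vanishing near $T/3$ — i.e., exactly the cutoff device the paper uses for its own gluing — so this is a fixable omission rather than an error, but you should state it. Be aware also that the two steps you defer to "careful analytic estimates" are where essentially all the work lies: the convergence of the $x$-series up to $|x|=1$ is obtained in the paper not by direct manipulation of the recursion for $z_{n+2}$ but via an abstract Cauchy--Kowalewski theorem in a scale of Gevrey-type Banach spaces, and the quantitative space-jet-to-time-jet bounds are a delicate induction (Proposition \ref{prop10}); the threshold $\hat R=4e^{(2e)^{-1}}$ then arises from combining the radius constraint $L<1/4$ of the $x$-Cauchy problem with the loss factor $e^{e^{-1}}$ in the Borel step, i.e.\ from $4e^{e^{-1}}/R'^2<1/4$, which is consistent with, though more specific than, your heuristic.
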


A similar result with only {\em one control} can be derived assuming that $f$ is odd w.r.t. $(x,y_0)$. 
Consider the control system
\ba
\partial _t y = \partial _x ^2 \, y + f(x , y ,  \partial _x y),&& x\in [0,1],\ t\in [0,T],  \label{WW1}\\ 
y(0,t)=0,&& t\in [0,T], \label{WW2} \\
y(1,t)=h(t),&& t\in [0,T], \label{WW3} \\
y(x,0)=y^0(x),&& x\in [0,1]. \label{WW4}
\ea
\begin{corollary}
\label{cor1}
Let $f=f(x,y_0,y_1)$ be as in \eqref{AB1}-\eqref{AB4} with $b_2>\hat R := 4e^{(2e)^{-1}}\approx 4.81$, and assume that
\be
\label{WW5}
f(-x,-y_0,y_1)= - f(x,y_0,y_1)\quad \forall x\in [-1,1], \ \forall y_0,y_1\in (-4,4). 
\ee
Let $R>\hat R$ and $T>0$. Then there exists some number $\hat C >0$ such that for all
 $y^0,y^1\in {\mathcal R}_{R,\hat C}$ with $(y^0(-x), y^1(-x))= (-y^0(x), -y^1(x))$ for all $x\in [-1,1]$, there exists $h\in G^2([0,T])$ such that the solution $y$ of 
\eqref{WW1}-\eqref{WW4} is defined for all $(x,t)\in [-1,1]\times \in [0,T]$ and satisfies $y(x,T)=y^1(x)$ for all  $x\in [0,1]$. Furthermore, we have that 
$y\in G^{1,2}([0,1]\times [0,T])$.      
\end{corollary}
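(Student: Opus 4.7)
The plan is to deduce Corollary \ref{cor1} from Theorem \ref{thm1} by an odd reflection argument. The data $y^0, y^1 \in \mathcal R_{R,\hat C}$ are already given as odd functions on $[-1,1]$, so I would apply Theorem \ref{thm1} directly on $[-1,1]$ to obtain a Gevrey controlled trajectory $y$ with boundary controls $h_{-1}, h_1 \in G^2([0,T])$ steering $y^0$ to $y^1$. The whole point of hypothesis (\ref{WW5}) is that this trajectory can be arranged to be odd in $x$, so that $y(0,t) \equiv 0$ (matching the Dirichlet condition at $x=0$) and $h_{-1}(t) = -h_1(t)$. Taking $h := h_1$ and restricting $y$ to $[0,1] \times [0,T]$ then produces a solution of (\ref{WW1})-(\ref{WW4}) with the announced Gevrey regularity.

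The compatibility check is the following. If $y(\cdot, t)$ is odd, then $\partial_x y(\cdot, t)$ is even, so (\ref{WW5}) yields
\[
f\bigl(-x, y(-x,t), \partial_x y(-x,t)\bigr) = f\bigl(-x, -y(x,t), \partial_x y(x,t)\bigr) = -f\bigl(x, y(x,t), \partial_x y(x,t)\bigr).
\]
A direct computation then shows that if $y$ solves (\ref{W1}) on $[-1,1]$ with odd data, the function $\tilde y(x,t) := -y(-x,t)$ solves the same PDE, with boundary values $\tilde y(\pm 1, t) = -h_{\mp 1}(t)$, same initial condition $y^0$, and same final condition $y^1$.

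To actually force the odd solution, I would revisit the construction used in the proof of Theorem \ref{thm1}. Following the philosophy of \cite{GL} and \cite{MRRreachable}, $y$ is built as the unique Gevrey solution of a nonlinear Cauchy problem in the $x$ variable, starting from traces $\phi_0(t) := y(0,t)$ and $\phi_1(t) := \partial_x y(0,t)$ chosen by a Borel-type procedure compatible with the initial and final profiles $y^0, y^1$. Since $y^0(0) = y^1(0) = 0$ by oddness, one can take $\phi_0 \equiv 0$; both $y(x,t)$ and $-y(-x,t)$ are then Gevrey solutions of the same $x$-Cauchy problem with data $(0, \phi_1)$ at $x=0$, and the uniqueness statement that underlies the proof of Theorem \ref{thm1} forces them to coincide. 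Oddness of $y$ follows, whence $y(0,t) = 0$ and $h_{-1} = -h_1$.

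The main obstacle is verifying that the Borel-type construction of $(\phi_0, \phi_1)$ in the proof of Theorem \ref{thm1} respects the odd symmetry — more concretely, that the interpolating profile in time between $y^0$ and $y^1$, together with its jet of space/time derivatives at $x=0$, can be chosen consistent with $\phi_0 \equiv 0$ — and that the $x$-Cauchy problem is uniquely solvable in the Gevrey class used. Both points should come for free from the proof of Theorem \ref{thm1}: odd initial and final data produce odd jets, and the recursion that builds $y$ from its traces at $x=0$ yields a unique Gevrey solution in the relevant ball of $\mathcal R_{R,\hat C}$-type estimates.
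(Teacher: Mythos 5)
Your overall strategy matches the paper's: everything reduces to showing that the trace $g_0(t)=y(0,t)$ at $x=0$ can be taken identically zero, after which one restricts to $[0,1]$ and reads off the control at $x=1$. (Your additional step — proving that $y$ is odd on $[-1,1]$ via uniqueness of the $x$-Cauchy problem for $-y(-x,t)$ — is correct in principle but not actually needed: once $y(0,t)\equiv 0$ the restriction to $[0,1]$ already solves \eqref{WW1}--\eqref{WW4}.)

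However, there is a genuine gap at the step you dismiss as coming ``for free'': the assertion that odd data produce a vanishing time-jet at $x=0$, i.e.\ that one may take $\phi_0\equiv 0$. The trace $g_0$ is not free to choose; its time-jet at $\tau$ must equal $d_n=\partial_t^n y(0,\tau)$, which by Lemma \ref{lem1} is
\[
d_n=\partial_x^{2n}y^0(0)+H_n\bigl(0,y^0(0),\partial_x y^0(0),\dots,\partial_x^{2n-1}y^0(0)\bigr),
\]
where $H_n$ is built by a nonlinear recursion from $f$. The first term vanishes by oddness of $y^0$, but the vanishing of the second is not automatic: one must prove that $x\mapsto H_n(x,y^0(x),\dots,\partial_x^{2n-1}y^0(x))$ is an odd function, which requires establishing by induction the parity law $H_n(-x,-y_0,y_1,-y_2,\dots,(-1)^{2n-1}y_{2n-1})=-H_n(x,y_0,\dots,y_{2n-1})$ (and a companion parity law for the auxiliary functions $\partial_x^n[\partial_x^2 y+f(x,y,\partial_x y)]$ that appear in the recursion). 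This induction is where hypothesis \eqref{WW5} is actually used, and it is the substantive content of the paper's proof (its Claims 1--3). Your proposal names the obstacle but does not carry out this verification, and ``odd initial and final data produce odd jets'' is precisely the statement that needs proof rather than a consequence of the proof of Theorem \ref{thm1}.
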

 
 Corollary \ref{cor1} can be applied e.g. to (i) the heat equation with an {\em even} analytic potential;  (ii) the Allen-Cahn equation; (iii) 
 the viscous Burgers' equation.  

The constant $\hat R := 4e^{(2e)^{-1}}$ is probably not optimal, but our main aim was to provide an   
 explicit (reasonable) constant. It is expected that the optimal constant is $\hat R := 1$, with  
a diamond-shaped domain of analyticity as in \cite{DE} and \cite{HKT} for the linear heat equation.

The paper is organized as follows. Section \ref{section2} is concerned with the wellposedness of the Cauchy problem in the 
$x$-variable  (Theorem \ref{thm2}).
The relationship between the jet of space derivatives and the jet of time derivatives at some point  (jet analysis) for a solution of \eqref{W1} is studied in Section \ref{section3}. In particular, 
we show that the nonlinear heat equation \eqref{W1} can be (locally) solved forward and backward if the initial
data $y_0$ can be extended as an analytic function in some ball of $\C$ (Proposition  \ref{prop100}). 
Finally, the proofs of Theorem \ref{thm1} and Corollary \ref{cor1} are  displayed in Section \ref{section4}.  

\section{Cauchy problem in the space variable}
\label{section2}

\subsection{Statement of the global wellposedness result} 

Let $f=f(x,y_0,y_1)$ be as in \eqref{AB1}-\eqref{AB4}. 
We are concerned with the wellposedness of the Cauchy problem in the variable $x$: 
\ba
\partial _x ^2y =\partial _t y - f(x,y, \partial _x y),&&  x\in [-1,1],\  t\in [t_1,t_2], \label{C1} \\
y(0,t)=g_0(t),&& t\in [t_1,t_2], \label{C2}\\
\partial _x y (0,t)=g_1(t),&& t\in [t_1,t_2], \label{C3}     
\ea 
for some given functions $g_0,g_1\in G^2([t_1,t_2])$.
The aim of this section is to prove the following result. 
\begin{theorem}
\label{thm2}
Let $f=f(x,y_0,y_1)$ be as in \eqref{AB1}-\eqref{AB4}. 
Let $-\infty <t_1<t_2<+\infty$ and $R>4$. Then there exists some number $C>0$ such that for all 
$g_0,g_1\in G^2([t_1,t_2])$ with 
 \be
 \label{AA}
 \vert g^{ (n) } _i(t)\vert \le C\frac{ (n!)^2 }{R^{n}}, \quad i=0,1,\ n\ge 0, \ t\in[t_1,t_2],  
 \ee
there exist some numbers $R_1,R_2$ with $4/e<R_1<R_2$ and a solution $y\in G^{1,2} ([-1,1]\times [t_1,t_2])$ of \eqref{C1}-\eqref{C3} satisfying for some 
constant $M>0$
\be
\label{AAAAA}
\vert \partial _x ^{p_1}\partial _t ^{p_2} y(x,t) \vert  \le M \frac{ ( p_1 + 2 p_2) !  }{R_1^{p_1} R_2^{2p_2}}\quad 
\forall (x,t)\in [-1,1]\times [t_1,t_2], \ \forall (p_1,p_2)\in \N ^2.
\ee
\end{theorem}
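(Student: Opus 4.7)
The plan is to construct the solution of \eqref{C1}-\eqref{C3} as a Taylor series in $x$,
$$
y(x,t) = \sum_{n\ge 0} y_n(t)\,\frac{x^n}{n!},
$$
with $y_0=g_0$ and $y_1=g_1$ prescribed by the Cauchy data, and the remaining $y_n$ forced by equation \eqref{C1} and its $x$-derivatives evaluated at $x=0$:
$$
y_{n+2}(t) = y_n'(t) - \partial_x^n [f(x,y(x,t),\partial_x y(x,t))]\big|_{x=0}, \qquad n\ge 0.
$$
This recursion determines the full jet $(y_n)_{n\ge 0}$ uniquely. The whole game is then to prove the a priori bound
$$
\sup_{t\in[t_1,t_2]}|y_n^{(k)}(t)| \le M\,\frac{(n+2k)!}{R_1^{n}R_2^{2k}}, \qquad n,k\ge 0, \quad (\star)
$$
for suitable $M>0$ and $4/e<R_1<R_2$. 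The exponent $n+2k$ reflects the heat scaling: two $t$-derivatives cost as much as two $x$-derivatives.

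I would prove $(\star)$ by strong induction on $n+2k$. The base cases $n\in\{0,1\}$ follow from hypothesis \eqref{AA} on $g_0,g_1$, using $(2k)!\le 4^{k}(k!)^{2}$; the margin $R>4$ leaves room to pick $R_2$. For the inductive step, $k$ time-derivatives of the recursion give
$$
y_{n+2}^{(k)}(t) = y_n^{(k+1)}(t) - \partial_t^k\partial_x^n [f(x,y,\partial_x y)]\big|_{x=0}.
$$
The first term is controlled directly by the induction hypothesis at order $n+2(k+1)$. For the nonlinear term I would substitute $f=\sum_{p+q>0}A_{p,q}(x)\,y_0^p y_1^q$ from \eqref{AB2} and expand via Leibniz and Faa di Bruno. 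This produces a structured combinatorial sum of products of values $y_m^{(j)}(t)$ with $m+2j<n+2k+2$, each of which is controlled by the induction hypothesis.

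The cleanest way to package the bookkeeping is a majorant argument. The ansatz $(\star)$ is equivalent to saying that the formal series $\sum_{n,k\ge 0}|y_n^{(k)}|\,x^{n}\tau^{2k}/(n+2k)!$ is dominated by $M\sum_{n,k}(x/R_1)^{n}(\tau/R_2)^{2k}$, a product of two geometric series. Products $y^p(\partial_x y)^q$ of such majorants are again of geometric type, and the decay $b_0^{-p}b_1^{-q}$ in \eqref{AB3} allows the summation over $p,q$. The spatial convolution is of Gevrey-$1$ type and behaves well; the time convolution must respect the Gevrey-$2$ structure, and the two are coupled through the single factor $(n+2k)!$. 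Stirling's formula applied to the multinomial coefficients generated by Faa di Bruno then produces the threshold $4$ below which $R_1,R_2$ cannot descend; balancing these constraints and taking $M$ small closes the induction.

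The main obstacle is precisely this last step: converting the multi-index Faa di Bruno expansion of $\partial_t^k\partial_x^n[y^p(\partial_x y)^q]$ into a closed-form estimate of the announced type, and checking that the resulting constraints on $R_1,R_2$ are compatible with the sole assumption $R>4$. Once $(\star)$ holds, the series converges for $|x|\le 1$ because $R_1>1$, termwise differentiation yields $y\in G^{1,2}([-1,1]\times[t_1,t_2])$ together with the estimate \eqref{AAAAA}, and the equation \eqref{C1} holds on $[-1,1]\times[t_1,t_2]$ because both sides are real-analytic in $x$ uniformly in $t$ and agree to all orders at $x=0$. The Cauchy conditions \eqref{C2}-\eqref{C3} are built into the construction.
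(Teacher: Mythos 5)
Your overall strategy (formal power series in $x$ at $x=0$, coefficients forced by the recursion $y_{n+2}=y_n'-\partial_x^n f|_{x=0}$, closed by an a priori bound of the form $|y_n^{(k)}|\le M(n+2k)!/(R_1^nR_2^{2k})$) is a legitimate alternative to the paper's route, which first obtains existence from an abstract Cauchy--Kowalevski theorem in a scale of Banach spaces of Gevrey functions and only afterwards proves the Gevrey bound \eqref{AAAAA} by an induction very close to yours. But your proposal has a genuine gap exactly at the point you yourself flag as ``the main obstacle'', and the ansatz $(\star)$ as written cannot close the induction. The problem is the Leibniz/Fa\`a di Bruno sum for the nonlinearity: if $u$ and $v$ both satisfy $(\star)$, then each term $\binom{n}{j}\binom{k}{i}|u_j^{(i)}||v_{n-j}^{(k-i)}|$ is indeed bounded by $M^2(n+2k)!/(R_1^nR_2^{2k})$ (using $\binom{n}{j}\binom{k}{i}\le\binom{n+2k}{j+2i}$), but there are $(n+1)(k+1)$ such terms and the bound on each is essentially sharp (take $j=i=0$, or $j=n$ and $i=k$), so the product only satisfies $(\star)$ with $M^2$ replaced by $(n+1)(k+1)M^2$. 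Iterating over the powers $y^p(\partial_xy)^q$ and over $n$ makes this loss fatal. The same objection applies to your majorant formulation: because the two variables are coupled through the single factorial $(n+2k)!$, termwise domination by a product of geometric series is not stable under multiplication.

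The missing ingredient is to strengthen the induction hypothesis by a polynomial damping factor, i.e. to prove $|y_n^{(k)}|\le C\,(n+2k)!/\bigl(R_1^nR_2^{2k}(n+2k+1)^\mu\bigr)$ with $\mu>2$ (the paper takes $\mu>3$). With this weight the Leibniz sum is controlled, after writing $(n+2k+1)^{\mu}(2i+j+1)^{-\mu}(2(k-i)+n-j+1)^{-\mu}=\bigl((2i+j+1)^{-1}+(2(k-i)+n-j+1)^{-1}\bigr)^{\mu}$, by convexity and a convergent double series; this is exactly Lemma \ref{lem3} of the paper (the classical device of Kinderlehrer--Nirenberg and Yamanaka). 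Once that lemma is in hand, your induction does close: the factor $(R_1/R_2)^2<1$ coming from the linear term $y_n^{(k+1)}$ absorbs the $O(1/(n+2k))$ contributions of the nonlinearity for large ranks, smallness of $C$ handles the finitely many remaining ranks, the base case works since $R>4$ allows $4/e<R_1<R_2<2\sqrt R$, and the rest of your argument (convergence for $|x|\le1$ because $R_1>1$, verification of the PDE by analyticity in $x$) is fine. As written, however, the proposal defers precisely the combinatorial step on which the whole proof turns, and the specific a priori bound it proposes to propagate is not inductively stable.
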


\subsection{Abstract existence theorem}
We consider a family of Banach spaces $(X_s)_{s\in [0,1]}$ satisfying for $0\leq s'\leq s\leq 1$
\bnan
\label{embed}
X_s\subset X_{s'},\\
\label{CC1}
\nor{f}{X_{s'}}\leq \nor{f}{X_s};
\enan
that is, the natural embedding $ X_s\subset X_{s'}$ for $s'\le s$ is of norm less than $1$.

We are concerned with an abstract Cauchy problem  
\begin{eqnarray*}
\partial_x U(x)&=&G(x)U(x), \quad -1\le x\le 1, \\
U(0)&=&U^0,
\end{eqnarray*}
where $X^0\in X_1$ and $\big( G(x)\big)_{x\in [-1,1]}$ is a familly  of {\em nonlinear} operators with
 possible {\em loss of derivatives}.  

Our first result is a  
global wellposedness result. 
\begin{theorem}
\label{thmexistenceloc}
For any $\e \in (0,1/4)$, there exists a constant $D>0$ such that for any family $(G(x))_{x\in [-1,1]} $ of nonlinear maps 
from $X_s$ to $X_{s'}$ for $0\le s'<s\le 1$ satisfying 
\bnan
\nor{G(x)U}{X_{s'}}& \le &\frac{\e }{s-s'}\nor{U}{X_{s}}
\label{hypoGloc1}
\\
\label{hypoGloc2}
\nor{G(x)U- G(x)V)}{X_{s'}}& \le &\frac{\e }{s-s'}\nor{U-V}{X_{s}}
\enan
for $0\leq s'<s\leq 1$, $x\in [-1,1]$ and $U$,$V \in X_s$ 
with $\nor{U}{X_{s}}\le D$, $\nor{V}{X_{s}}\leq D$,  there exists $\eta>0$ so that for any $U^0\in X_1$ with $\nor{U^0}{X_1}\leq \eta$, there exists a 
solution $U\in C([-1,1],X_{s_0})$ for some $s_0\in (0,1)$ to the integral equation
\bna
U(x)=U^0+\int_0^x G(\tau)U(\tau)d\tau .
\ena
Moreover, we have the estimate 
\[
\nor{U( x )}{X_s}\leq C_1 \left(1 -\frac{ \lambda |x|}{a_\infty (1-s)} \right) ^{-1} \nor{U^0}{X_1}, \quad 
\textrm{\rm for }   \ 0\le s<1, \ |x|<\frac{a_\infty}{\lambda} (1-s),
\]
where $\lambda \in (0,1)$, $a_\infty \in (\lambda ,1)$ and $C_1>0$ are some constants.
In particular, we have 
\[
\nor{U( x )}{X_s}\leq C_1 \left(1 -\frac{ 2}{\frac{a_\infty}{\lambda} +1} \right) ^{-1} \nor{U^0}{X_1}, \quad 
\textrm{\rm for }   \ 0\le s\le s_0=\frac{1}{2}(1-\frac{\lambda}{a_\infty}), \ |x| \le 1.
\]
If, in addition,  we assume that
\be
\label{hypoGloc0}
\textrm{ for all }  U_0\in X_s \textrm{ with } \Vert U_0\Vert _{X_s} \le D,   \textrm{ the map } \tau \in [-1,1]\to G(\tau ) U_0\in X_{s'} \ \ \textrm{ is continuous}, 
\ee
then $U$ is solution in the classical sense of  
\bneqn
\label{eqnabstraite}
\partial_x U(x)&=&G(x)U(x), \quad -1\le x\le 1, \\
U(0)&=&U^0.
\eneqn
\end{theorem}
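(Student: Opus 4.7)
The plan is to prove Theorem \ref{thmexistenceloc} by a Picard iteration in the scale $(X_s)_{s\in[0,1]}$, following the classical abstract Cauchy--Kowalewski framework of Ovsiannikov--Nirenberg--Nishida, adapted to the nonlinear setting via the Lipschitz estimate \eqref{hypoGloc2}. Set $U_0(x)\equiv U^0$ and
$$U_{n+1}(x)=U^0+\int_0^x G(\tau)U_n(\tau)\,d\tau,$$
and aim to show that $(U_n)$ is Cauchy in $C([-1,1];X_{s_0})$ for the $s_0$ announced in the statement.

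\textbf{Step 1 (a priori bound).} By induction on $n$, I would establish the uniform estimate
$$\|U_n(x)\|_{X_s}\le C_1\|U^0\|_{X_1}\sum_{k=0}^{n}\left(\frac{\lambda|x|}{a_\infty(1-s)}\right)^{k}$$
valid for all $s\in[0,1)$ and $|x|<(a_\infty/\lambda)(1-s)$. The heart of the induction is to bound $\|G(\tau)U_n(\tau)\|_{X_s}$ using \eqref{hypoGloc1} with an intermediate scale $\sigma\in(s,1)$ chosen as an affine function of $\tau$ between $s$ (at $\tau=x$) and some $s_*<1$ (at $\tau=0$). Integrating in $\tau$ produces a factor of the form $(1+1/k)^k$ at the $k$-th step of the induction; since $(1+1/k)^k\nearrow e$, the constants $\lambda\in(0,1)$ and $a_\infty\in(\lambda,1)$ can be fixed so that the resulting geometric series converges, and passing to the limit $n\to\infty$ gives the claimed bound on the candidate solution.

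\textbf{Step 2 (smallness, contraction, and passage to the limit).} Choose $\eta>0$ so small that the a priori bound forces $\|U_n(x)\|_{X_s}\le D$ uniformly in $n$ for all $x\in[-1,1]$ and $0\le s\le s_0=\tfrac12(1-\lambda/a_\infty)$. Then \eqref{hypoGloc2} applies at every iteration, and the same scale-splitting as in Step~1, applied now to $U_{n+1}-U_n$, yields
$$\|U_{n+1}(x)-U_n(x)\|_{X_s}\le C_1\|U_1-U_0\|_{X_1}\left(\frac{\lambda|x|}{a_\infty(1-s)}\right)^{n},$$
so the series $\sum(U_{n+1}-U_n)$ converges in $C([-1,1];X_{s_0})$ to a limit $U$ solving the integral equation. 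The bound in the statement is inherited from Step~1.

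\textbf{Step 3 (from integral to differential formulation).} Assuming \eqref{hypoGloc0}, the continuity of $\tau\mapsto G(\tau)U(\tau)$ with values in $X_{s'}$ (for any $s'<s_0$) follows by combining continuity of $\tau\mapsto U(\tau)$ in $X_{s_0}$ with the pointwise continuity of $G$ and the Lipschitz bound \eqref{hypoGloc2}. Then the integral $\int_0^x G(\tau)U(\tau)\,d\tau$ is of class $C^1$ into $X_{s'}$ and \eqref{eqnabstraite} holds in the classical sense.

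\textbf{Main obstacle.} The delicate step is the induction in Step~1. The loss-of-derivatives factor $1/(s''-s)$ in \eqref{hypoGloc1} is nearly singular, and any naive choice of intermediate scale yields a bound that blows up with $n$. The crucial combinatorial observation, borrowed from the abstract Cauchy--Kowalewski literature, is that choosing the auxiliary scales in arithmetic progression between $s$ and $1$ along the Picard iteration exactly compensates the $n!$-type factors produced by iterated loss, leaving behind only the finite factor $e$. Fixing $\lambda/a_\infty$ strictly less than $1$ then absorbs this $e$ into the geometric series and gives the sharp dependence displayed in the statement.
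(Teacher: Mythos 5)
Your overall strategy is the same as the paper's: both are Picard iterations in the scale $(X_s)$ in the spirit of the abstract Cauchy--Kowalewski theorems of Ovsiannikov--Nirenberg--Nishida. The paper follows the Kano--Nishida variant: it works with the weighted norms $\nor{U}{Y_k}=\sup\nor{U(x)}{X_s}\bigl(1-\tfrac{|x|}{a_k(1-s)}\bigr)$ built on a slowly shrinking sequence $a_k\downarrow a_\infty$, proves the bound $\nor{U_{k+1}}{X_s}\le D$ \emph{simultaneously} with the contraction estimate $\nor{V_{k+1}}{Y_{k+1}}\le 4\e\nor{V_k}{Y_k}$ in a single induction (this is where $\e<1/4$ enters, giving a genuine contraction with ratio independent of $k$), and then uses a scaling argument $\tilde x=\lambda x$, $\tilde G=\lambda^{-1}G(\lambda^{-1}\cdot)$ to pass from the interval $[-(1-\delta),1-\delta]$ produced by this iteration to all of $[-1,1]$. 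Your plan instead aims at the geometric decay $\nor{V_n(x)}{X_s}\lesssim\bigl(\tfrac{\lambda|x|}{a_\infty(1-s)}\bigr)^n$ directly on $[-1,1]$ via the classical $n!$-versus-$n^n$ compensation; that route is viable here because $\e<1/4<1/e$, and it would bypass the paper's rescaling step. Note also that the joint induction matters: the hypotheses \eqref{hypoGloc1}--\eqref{hypoGloc2} are only available on the ball $\nor{U}{X_s}\le D$, so the smallness of $\eta$ must be invoked \emph{inside} the induction of your Step 1, not afterwards in Step 2.

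One detail of your Step 1, as literally written, would fail. You take the intermediate scale $\sigma(\tau)$ affine in $\tau$ with $\sigma(x)=s$; then $\sigma(\tau)-s=c\,(x-\tau)$ and, since your inductive bound on $\nor{V_n(\tau)}{X_{\sigma(\tau)}}$ does not vanish as $\tau\to x$, the integral $\int_0^x\frac{\e}{\sigma(\tau)-s}\nor{V_n(\tau)}{X_{\sigma(\tau)}}\,d\tau$ has a logarithmic divergence at the upper endpoint. The two standard ways out are exactly the two mechanisms you conflate: either (i) the Kano--Nishida choice of the paper, $s(\tau)=\tfrac12\bigl(1+s-\tfrac{\tau}{a_{k+1}}\bigr)$, which stays \emph{strictly} above $s$ on the whole range of integration (the singularity is at $\tau=a_{k+1}(1-s)$, outside the domain, and is absorbed into the weight defining $Y_{k+1}$, yielding $\int_0^x 2a_{k+1}\tfrac{A+\tau}{(A-\tau)^2}d\tau\le \tfrac{4a_{k+1}A}{A-x}$); or (ii) the Nirenberg--Nishida choice $\sigma=s+\tfrac{1-s}{n+1}$, constant in $\tau$ but depending on the iteration index $n$, which is the one that actually produces the $(1+1/n)^n\to e$ factor you invoke in your ``main obstacle'' paragraph. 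Either fix makes your outline go through; as stated, the $\tau$-dependent scale touching $s$ at $\tau=x$ does not.
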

We prove first the existence of a solution on an interval $[-(1-\delta ), 1-\delta ]$, where $\delta \in (0,1)$. Next, we use a scaling argument
to obtain a solution for $x\in [-1,1]$. 
\begin{proposition}
\label{propexistenceloc}
For any $\e \in (0,1/4)$, any $\delta\in (0,1)$ and any $G$ and $D$ as in \eqref{hypoGloc0}-\eqref{hypoGloc2}, there exists some numbers $a_{\infty}>1-\delta$ and $\eta>0$ 
such that for any $U^0\in X_1$ with $\nor{U^0}{X_1}\leq \eta$, there exists a unique solution 
for $x\in (-a_{\infty},a_{\infty})$ to \eqref{eqnabstraite} in the space $Y_\infty$
(see below). Moreover, we have the estimate 
\[
\nor{U( x )}{X_s}\leq C_1 \left(1 -\frac{|x|}{a_\infty (1-s)} \right) ^{-1} \nor{U^0}{X_1}, \quad 
\textrm{\rm for }   \ 0\le s<1, \ |x|<a_\infty (1-s),
\]
where $C_1>0$ is a constant.
\end{proposition}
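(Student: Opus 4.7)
The strategy is to realize $U$ as the Picard fixed point of the integral map
\[
\Phi[U](x) := U^0 + \int_0^x G(\tau)\,U(\tau)\,d\tau
\]
inside a weighted Banach space of Ovsyannikov--Nirenberg--Nishida type. For a parameter $a_\infty>1-\delta$ to be specified, I take $Y_\infty$ to be the space of continuous maps $U:(-a_\infty,a_\infty)\to X_0$ such that $U(x)\in X_s$ whenever $|x|<a_\infty(1-s)$, equipped with the weighted norm
\[
\nor{U}{Y_\infty} := \sup_{\substack{0\le s<1\\ |x|<a_\infty(1-s)}} \Bigl(1-\tfrac{|x|}{a_\infty(1-s)}\Bigr)\nor{U(x)}{X_s}.
\]
The weight records the linear shrinking of the admissible $x$-radius as the smoothness index $s$ grows, and is precisely the scale in which the loss of derivative \eqref{hypoGloc1} integrates to a bounded operation.

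The analytic heart of the proof is a Nagumo-type estimate. When $\nor{U}{Y_\infty}\le M$, hypothesis \eqref{hypoGloc1} applied with any intermediate index $s''\in(s',1-|\tau|/a_\infty)$ combined with the weighted bound on $\nor{U(\tau)}{X_{s''}}$ gives
\[
\nor{G(\tau)U(\tau)}{X_{s'}} \le \frac{\e\, M}{(s''-s')\bigl(1-|\tau|/(a_\infty(1-s''))\bigr)}.
\]
Inserting the balanced midpoint $s'':=s'+\tfrac{1}{2}(1-s'-|\tau|/a_\infty)$, which halves the available gap so that $a_\infty(1-s'')-|\tau|=\tfrac12(a_\infty(1-s')-|\tau|)$, upgrades this to a purely quadratic singularity whose $\tau$-integral telescopes. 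One elementary computation, followed by multiplication by the weight $(1-|x|/(a_\infty(1-s')))$ and the use of \eqref{CC1} to dominate $\nor{U^0}{X_{s'}}$ by $\nor{U^0}{X_1}$, delivers the self-improving bound
\[
\Bigl(1-\tfrac{|x|}{a_\infty(1-s')}\Bigr)\nor{\Phi[U](x)}{X_{s'}} \le \nor{U^0}{X_1} + 4\,\e\,a_\infty\,M.
\]
Choosing $a_\infty\in(1-\delta,\,1/(4\e))$, which is non-empty since $\e<1/4$, and $M:=\nor{U^0}{X_1}/(1-4\e a_\infty)$, makes $\Phi$ stabilize the closed $Y_\infty$-ball of radius $M$; replaying the same estimate using \eqref{hypoGloc2} in place of \eqref{hypoGloc1} produces a contraction of ratio $4\e a_\infty<1$, and Banach's theorem supplies the unique fixed point, with $C_1=1/(1-4\e a_\infty)$.

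To promote the integral identity to \eqref{eqnabstraite} in the classical sense, the continuity hypothesis \eqref{hypoGloc0} combined with the Lipschitz bound \eqref{hypoGloc2} shows that $\tau\mapsto G(\tau)U(\tau)$ is continuous in $X_{s'}$ for each $s'$ strictly inside the admissible region, whence the Bochner integral is differentiable with derivative $G(x)U(x)$. The main technical obstacle I anticipate is that both \eqref{hypoGloc1} and \eqref{hypoGloc2} are conditional on the pointwise $X_{s''}$-norm remaining below the cut-off $D$, while the weighted control only yields $\nor{U(\tau)}{X_{s''}}\le M/(1-|\tau|/(a_\infty(1-s'')))$, which blows up at the boundary of the admissible region. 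I would address this by running the fixed point argument on the slightly shrunk region $|\tau|\le\beta\,a_\infty(1-s)$ for some $\beta<1$ close to $1$, where the pointwise bound becomes $M/(1-\beta)$ and hence stays below $D$ provided $\eta$ is small enough, and then let $\beta\to 1$, absorbing the resulting loss into the constants. Modulo this bookkeeping, the argument is a faithful abstract Cauchy--Kovalevsky scheme.
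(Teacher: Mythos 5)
Your setup --- the weighted space $Y_\infty$, the midpoint choice of the intermediate index, and the resulting factor $4\e a_\infty$ --- is exactly the Nagumo/Ovsyannikov machinery the paper uses, and you have correctly isolated the one real difficulty: hypotheses \eqref{hypoGloc1}--\eqref{hypoGloc2} are only valid below the pointwise threshold $D$, while the $Y_\infty$-norm only gives a pointwise bound that degenerates at the boundary $|x|=a_\infty(1-s)$. The problem is that your proposed repair does not close. If you restrict to $|\tau|\le\beta a_\infty(1-s)$ and keep the weight $1-|x|/(a_\infty(1-s))$, then the midpoint index $s''(\tau)$ satisfies $|\tau|\le\beta a_\infty(1-s''(\tau))$ only for $|\tau|\le\frac{\beta}{2-\beta}a_\infty(1-s')<\beta a_\infty(1-s')$, so for $\tau$ near the edge of the restricted region the point $(\tau,s''(\tau))$ leaves it and the norm no longer controls $\nor{U(\tau)}{X_{s''}}$; if you instead squeeze $s''$ into $(s',1-|\tau|/(\beta a_\infty))$ so as to stay inside, the $\tau$-integral acquires a logarithmic divergence as $|x|\to\beta a_\infty(1-s')$ that the weight (bounded below by $1-\beta$ there) cannot absorb. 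Switching to the weight $1-|x|/(\beta a_\infty(1-s))$ merely reproduces the original degeneracy at the new boundary. Moreover the limit $\beta\to1$ is not available: keeping the pointwise norms below $D$ costs you $\eta\le C(1-\beta)D$, so letting $\beta\to1$ forces $\eta\to0$.

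The paper's resolution is to interleave the shrinkage with the Picard iteration rather than perform it once: it fixes a decreasing sequence $a_0=1>a_1>\dots\to a_\infty>1-\delta$ with $\sum_k(4\e)^k(1-a_{k+1}/a_k)^{-1}<\infty$, estimates the increment $V_k=U_{k+1}-U_k$ in the norm $Y_k$ built on $a_k$ (obtaining $\nor{V_k}{Y_k}\le(4\e)^k\eta$), and then uses the strict shrinkage from $a_k$ to $a_{k+1}$ to convert the weighted bound on $V_k$ into the pointwise bound $\nor{V_k(x)}{X_s}\le(1-a_{k+1}/a_k)^{-1}(4\e)^k\eta$ on $|x|<a_{k+1}(1-s)$; summing these keeps every iterate $U_{k+1}$ below $D$, which is precisely what legitimizes the next application of \eqref{hypoGloc2}. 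The geometric decay $(4\e)^k$ beats the polynomially growing loss $(1-a_{k+1}/a_k)^{-1}\sim\gamma^{-1}(1+k)^2$, and $a_\infty$ stays above $1-\delta$. You would need to import this device (or an equivalent continuously shrinking family of domains) to make your argument rigorous; as written, the single-space contraction cannot be run, and this is the essential content of the proposition rather than bookkeeping.
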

\bnp[Proof of Proposition \ref{propexistenceloc}]

We follow closely the proof of \cite{KN}, taking care of the choice of the constants and of the time of existence. 

Consider a sequence of numbers  $(a_k)_{k\ge 0} $ satisfying the following properties (the existence of such a sequence is
proved in Lemma \ref{lmak}, see  below):
\begin{enumerate}
\item[(i)] $a_0=1$;
\item[(ii)] $(a_k)_{k\ge 0}$ is a decreasing sequence converging to $a_{\infty}>1-\delta$;
\item[(iii)] $ \sum_{i=0}^\infty \frac{(4\e)^i}{1-\frac{a_{i+1}}{a_i}}<+\infty$.
\end{enumerate}
Next, we pick $\eta$ small enough so that $\eta \sum_{i=0}^\infty \frac{(4\e)^i}{1-\frac{a_{i+1}}{a_i}}< D$.

We define, for $k\in \N\cup \{ \infty\}$,  the (Banach) space 
$Y_k =\{ U \in \bigcap _{0\le s< 1} C(-a_k(1-s), a_k(1-s), X_s) ; \ \Vert U\Vert _{Y_k} <\infty\}$ 
with the norm
\be
\nor{U}{Y_{k}}:=\sup_{\substack{|x|<a_{k}(1-s)\\0\leq s<1}} \nor{U(x)}{X_s} \left(1-\frac{|x|}{a_{k}(1-s)}\right) \qquad \textrm{ if } k\in \N ,
\label{A1001}
\ee
\be
\nor{U}{Y_{\infty}}:=\sup_{\substack{|x|<a_{\infty}(1-s)\\0\leq s<1}} \nor{U(x)}{X_s} \left(1-\frac{|x|}{a_{\infty}(1-s)}\right) 
\quad \textrm{ if } k=\infty .
\ee

Note that for $|x|<a_k(1-s)$, $0\leq s<1$, we have that  $1-\frac{|x|}{a_k(1-s)}\in (0,1]$. Note also that we have $Y_k\subset Y_{k+1}$ and $\nor{U}{Y_{k+1}}\leq \nor{U}{Y_{k}}$, for the sequence  $(a_k)_{k\in \N}$ is decreasing.

We want to define a sequence $(U_k)_{k\ge 0}$ by the relations
\bna
U_0=0, \quad U_{k+1}=TU_k \quad \textrm{ for } k\in \N 
\ena where
\bna
(TU)(x)=U^0+\int_0^x G(\tau)U(\tau)d\tau .
\ena
Note that $U_1 (x)=(TU_0)(x)=U^0$ for $|x|<1$. 
Introduce 
\[
V_k:=U_{k+1}-U_k,\quad k\in \N. 
\]

We prove by induction on $k\in \N$ the following statements (that  contain the fact that the sequence $(U_k)_{k\in \N}$ is indeed well defined):
 \ba
 \label{Itemlambda}
\lambda_{k}:=\nor{V_k}{Y_k} &\leq& (4\e )^{k}  \eta  , \\
\label{inegUs}
\nor{U_{k+1}(x)}{X_s}\leq  \sum_{i=0}^k \frac{\lambda_i}{1-\frac{a_{i+1}}{a_i}} &\leq& D \ \textnormal{  for }|x| < a_{k+1} (1-s),
\ea
so that $G(x)U_{k+1}(x)$ is well defined in $X_{s'}$ for $|x|\leq a_{k+1} (1-s)$.

Let us first check that  \eqref{Itemlambda}-\eqref{inegUs} are valid 
 for $k=0$. For \eqref{Itemlambda}, we have that 
\[
\lambda _0 = \Vert  U_1-U_0\Vert _{Y_0} \le  \Vert U^0\Vert _{X_1} \le \eta. 
\]
For \eqref{inegUs}, we notice that 
\[
\Vert U_1\Vert _{X_s} =\Vert U^0\Vert _{X_s} \le \Vert U^0\Vert _{X_1} \le 
\eta  \le \frac{\eta }{1-a_1} \le D.
\]

Assume that \eqref{Itemlambda}-\eqref{inegUs} are true up to the rank $k$. Let us check that they are also true at the rank $k+1$.

Take $s$ and $x$ (for simplicity, we assume $x\geq 0$) so that $0\leq x 
< a_{k+1}(1-s)$.
For any $0\leq \tau\leq x$, \eqref{inegUs} gives $\max \big( \nor{U_{k+1}(\tau)}{s} , \nor{U_{k}(\tau)}{s} \big) \leq D$ (recall $a_{k+1}\leq a_k$).
In particular, we can apply \eqref{hypoGloc2} replacing $s'$ by $s$ and $s$ by 
$s(\tau)=\frac{1}{2}\left(1+s-\frac{\tau}{a_{k+1}}\right)$, obtaining  
\bna
\nor{G(\tau) U_{k+1} (\tau ) - G(\tau ) U_k(\tau)}{X_{s}}&\leq &\frac{\e }{s(\tau ) -s}\nor{U_{k+1} (\tau ) -U_k(\tau)}{X_{s(\tau)}}
\ena
Note that we have indeed $0\le s<s(\tau) <1$. Next 
\bna
\nor{V_{k+1}(x)}{X_s} &=& \Vert  (T U_{k+1} )(x)   - (T U_k )(x) \Vert _{X_s} \\
&\leq& \int_0^x \nor{ G(\tau) U_{k+1} (\tau ) - G(\tau ) U_k(\tau)}{ X_s}d\tau \\
&\leq& \int_0^x\frac{\e }{s(\tau)-s}\nor{U_{k+1} (\tau ) -U_k(\tau)}{X_{s(\tau)}}d\tau\\
&\leq &\e\nor{V_k}{Y_{k}}\int_0^x\frac{a_{k+1} }{s(\tau)-s}\left(\frac{1-s(\tau)}{a_{k+1}(1-s(\tau))-\tau}\right)d\tau
\ena
where we have used the fact that $s(\tau)$ satisfies 
$\tau < a_{k+1}(1-s(\tau))$  (for 
$a_{k+1} (1-s (\tau )) -\tau  =\frac{1}{2} (a_{k+1} (1-s) -\tau )>0$) 
and $0<s(\tau)<1$, so that with 
\eqref{A1001} 
\bnan
\label{justifstau}
\nor{V_k(\tau)}{X_{s(\tau)}}
\leq \left(1-\frac{|\tau|}{a_{k+1}  (1-s(\tau) )}\right)^{{-1}}\nor{V_k}{Y_{k+1}}
\leq \left(1-\frac{|\tau|}{a_{k+1}  (1-s(\tau) )}\right)^{{-1}}\nor{V_k}{Y_k}.
\enan

Let us go back to the estimate of the integral. To simplify the notations, we denote $A=a_{k+1}(1-s)$ and recall $0\leq \tau \leq x < A$. We have
\begin{eqnarray*}
\int_0^x\frac{a_{k+1}(1-s(\tau))}{(s(\tau)-s)(a_{k+1}(1-s(\tau))-\tau)}d\tau
&=&2a_{k+1}\int_0^x\frac{A+\tau}{(A-\tau)^2}d\tau\\
&\leq& \int_0^x\frac{4a_{k+1}A}{(A-\tau)^2}d\tau=a_{k+1} \left.\frac{4A }{A-\tau}\right]_0^x\leq a_{k+1}\frac{4A }{A-x}\cdot
\end{eqnarray*}
So, recalling $\frac{A }{A-x}
=\left(1-\frac{x}{A }\right)^{-1}=  \left(1-\frac{|x|}{a_{k+1}(1-s)}\right)^{-1}$, we have obtained
\bna
\nor{V_{k+1}(x)}{s}\left(1-\frac{|x|}{a_{k+1}(1-s)}\right)\leq 4a_{k+1}\e \nor{V_k}{Y_{k}} .
\ena
So, we have proved that
\bnan
\label{Vk1}
\nor{V_{k+1}}{Y_{k+1}}\leq  4a_0\e\nor{V_k}{Y_k},
\enan
and hence $\lambda_{k+1}\leq 4a_0\e \lambda_k$. This yields
 \eqref{Itemlambda} at rank $k+1$.
 
 Let us proceed with the proof of \eqref{inegUs} at rank $k+1$. 

Since $U_{k+2}=U_{k+1}+V_{k+1}$, we only need to prove $\nor{V_{k+1}(x)}{s}\leq \frac{\lambda_{k+1}}{1-\frac{a_{k+2}}{a_{k+1}}}$ for $|x| <  a_{k+2}(1-s)$. This is obtained by noticing that 
\bna
\nor{V_{k+1}(x)}{s}\leq \left(1-\frac{|x|}{a_{k+1}(1-s)}\right)^{-1}\nor{V_{k+1}}{Y_{k+1}}\leq \left(1-\frac{a_{k+2}}{a_{k+1}}\right)^{-1}\lambda_{k+1}
\ena
since $|x| < a_{k+2}(1-s)$.
The proof by induction of  \eqref{Itemlambda}-\eqref{inegUs} is complete. 

We are now in a position to prove the existence of a solution to \eqref{eqnabstraite}.
Let us introduce the function $U_{\infty}:=\lim _{k\to +\infty} U_{k}=\sum_{k=0}^{+\infty} V_k$. Note that the convergence 
of the series is normal in $Y_{\infty}$. Indeed, 
\bna
\sum_{k=0}^{+\infty} \nor{V_k}{Y_{\infty}}\leq \sum_{k=0}^{+\infty}  \nor{V_k}{Y_{k}}\leq \sum_{k=0}^{+\infty} \lambda_k <+\infty.
\ena 
Note also that \eqref{inegUs} remains true for $U_{\infty}$ for $|x| < a_{\infty} (1-s)$, so that $G(\tau)U_{\infty}$ is well defined.

Let us prove that $U_{\infty}$ is indeed a solution of \eqref{eqnabstraite}.
Using the fact that $U_{k+1}-TU_k=0$, we have 
\bna
U_{\infty}-(TU_{\infty})(x)=U_{\infty}-U_{k+1}+\int_0^x \left[G(\tau)U_k(\tau)-G(\tau)U_{\infty}(\tau)\right]d\tau
\ena
where all the terms in the equation are in $Y_{\infty}$. 
The same estimates as before yield for $0\le s<1$ and $|x|<a_\infty (1-s)$
\[
\Vert \int_0^x [G(\tau ) U_k(\tau ) -G(\tau ) U_\infty (\tau )]\, d\tau \Vert _{X_s}
\le 4 a_\infty \varepsilon \Vert U_k-U_\infty \Vert _{Y_\infty} 
\left(  1- \frac{ |x| }{ a_\infty (1-s)} \right) ^{-1} ,  
\]
and hence
\[
\Vert U_\infty -TU_\infty\Vert _{Y_\infty}
\le \Vert U_\infty -U_{k+1} \Vert _{Y_\infty} + 4a_\infty \varepsilon 
\Vert U_k-U_\infty \Vert _{Y_\infty} \to 0  
\]
as $k\to +\infty$. Thus $U_\infty$ is a solution of \eqref{eqnabstraite}. 

Let us prove the uniqueness of the solution of \eqref{eqnabstraite} in the same space $Y_{\infty}$. Assume that $U$ and $\widetilde U$ are two solutions of \eqref{eqnabstraite} in $Y_{\infty}$. 
Pick  $\lambda \in (0,1)$, and let $a_0'=1$, $a_k'=\lambda a_k$ for $k\in \N^*\cup \{ \infty \}$. 
Notice that (i), (ii) and (iii) are still valid for the $a_k'$. 
Denote by $Y_k'$, $k\in \N \cup \{ \infty \}$, the space associated with $a_k'$. Note that $a_k'<a_\infty$ and hence
$Y_\infty \subset Y_k'$ for $k \gg 1$.   
Then we have by the same computations as above that
\[
\Vert U-\widetilde U\Vert _{Y_k'}
\le (4\varepsilon )^k \Vert U-\widetilde U \Vert _{Y_0'} \
\]
which yields 
\[
\Vert U -\widetilde U\Vert _{Y'_\infty} \le \lim_{k\to +\infty}  
\Vert U-\widetilde U\Vert _{Y_k'} =0.
\]
Thus $U(x)=\widetilde U(x)$ for $|x|<a_\infty '=\lambda a_\infty$, with
$\lambda$ as close to $1$ as desired.  
\enp
It remains to prove the existence of the sequence $(a_k)_{k\ge 0}$. This is done in the next lemma. 
\begin{lemma}
\label{lmak}
There exists a sequence $(a_k)_{k\in \N} $ satisfying (i), (ii) and (iii). 
\end{lemma}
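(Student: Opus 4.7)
The plan is to exhibit an explicit one-parameter family $(a_k)_{k\ge 0}$ with geometric decay toward its limit and verify the three conditions by direct computation. Since the critical quantity in (iii) is $1-a_{k+1}/a_k$, an ansatz in which $a_k-a_\infty$ is proportional to a power $\beta^k$ looks inevitable, and the choice of $\beta$ will be dictated by the need to beat the factor $(4\varepsilon)^i$ in the series.

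Concretely, I would fix once and for all $\delta'\in(0,\delta)$ (so that $1-\delta'>1-\delta$) and $\beta\in(4\varepsilon,1)$; this last interval is nonempty because the hypothesis $\varepsilon<1/4$ gives $4\varepsilon<1$. Then set
\[
a_k := 1 - \delta'(1-\beta^k), \qquad k\in\N.
\]
Condition (i) is immediate since $a_0=1$. For (ii), the map $k\mapsto\beta^k$ is strictly decreasing on $\N$, so $(a_k)$ is strictly decreasing with limit
\[
a_\infty := 1-\delta' > 1-\delta,
\]
as required.

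For (iii), a direct computation gives $a_k-a_{k+1}=\delta'(1-\beta)\beta^k$, and since $a_k\le 1$,
\[
1 - \frac{a_{k+1}}{a_k} \;=\; \frac{a_k - a_{k+1}}{a_k} \;\ge\; \delta'(1-\beta)\beta^k.
\]
Therefore
\[
\sum_{i=0}^{\infty} \frac{(4\varepsilon)^i}{1 - \frac{a_{i+1}}{a_i}} \;\le\; \frac{1}{\delta'(1-\beta)} \sum_{i=0}^{\infty} \left(\frac{4\varepsilon}{\beta}\right)^i,
\]
and the right-hand series converges because $\beta>4\varepsilon$. All three properties are thus satisfied.

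The construction is entirely elementary and there is no serious obstacle. The one point worth flagging is that the two simultaneous constraints---namely $\beta\in(4\varepsilon,1)$, which is what makes the series in (iii) summable, and $a_\infty=1-\delta'>1-\delta$, which is part of (ii)---can both be satisfied precisely because the assumption $\varepsilon<1/4$ built into Theorem~\ref{thmexistenceloc} forces $4\varepsilon<1$. If $\varepsilon$ were allowed to reach $1/4$, the admissible range for $\beta$ would collapse, which explains the strict inequality in the hypothesis.
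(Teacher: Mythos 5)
Your construction is correct: all three properties are verified by direct computation, and every step checks out (in particular the lower bound $1-a_{k+1}/a_k \ge a_k-a_{k+1}=\delta'(1-\beta)\beta^k$, which uses $0<a_k\le 1$, is fine). Your route is genuinely different from the paper's. The paper defines the sequence by the multiplicative recursion $a_{k+1}=a_k\bigl(1-\frac{\gamma}{(1+k)^2}\bigr)$, so that $1-\frac{a_{i+1}}{a_i}=\frac{\gamma}{(1+i)^2}$ decays only \emph{polynomially}; the series in (iii) then becomes $\gamma^{-1}\sum (1+i)^2(4\e)^i$, which converges for every $\e<1/4$ with no coupling between the decay rate and $\e$, and the smallness of $\gamma$ is used only to push the infinite product $a_\infty=\prod_k\bigl(1-\frac{\gamma}{(1+k)^2}\bigr)$ above $1-\delta$ (via a logarithm/$\zeta(2)$ estimate). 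You instead prescribe the limit directly, $a_k=1-\delta'(1-\beta^k)$ with $a_\infty=1-\delta'$ built in, so (ii) is immediate with no infinite-product argument; the price is that $1-\frac{a_{i+1}}{a_i}$ decays \emph{geometrically} like $\beta^i$, so summability of (iii) forces you to choose $\beta\in(4\e,1)$, i.e.\ a parameter tied to $\e$. Since $\e$ is fixed throughout the proposition this is harmless, and your closing observation is accurate: both constructions degenerate exactly at $\e=1/4$, where in your case the interval $(4\e,1)$ for $\beta$ collapses and in the paper's case $\sum(1+i)^2(4\e)^i$ diverges.
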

\bnp
We denote $C_0=4\e<1$ and we require
$ \sum_{i=0}^\infty \frac{C_0^i}{1-\frac{a_{i+1}}{a_i}}<+\infty$. Picking $a_0=1$ and $\gamma>0$ small enough, we define 
the sequence $(a_k)_{k\in \N}$ by induction by setting
\bna
a_{k+1}=a_k\left(1-\frac{\gamma}{(1+k)^2}\right), \quad k\in \N. 
\ena 
The sequence $(a_k)_{k\in \N} $ is clearly decreasing, $\frac{C_0^i}{1-\frac{a_{i+1}}{a_i}}= \gamma^{-1}  (1+i)^2 C_0^i$, and hence 
$ \sum_{i=0}^\infty \frac{C_0^i}{1-\frac{a_{i+1}}{a_i}}<+\infty$, for $C_0<1$.
Finally, $b_k=\ln(a_k)$ converges to $\sum_{k=0}^{\infty} \ln\left(1-\frac{\gamma}{(1+k)^2}\right)\geq - 2\gamma \sum_{k=0}^{\infty} \frac{1}{(1+k)^2}$ for $\gamma$ small enough. In particular, $a_{\infty}\geq e^{-2  \gamma \zeta(2)}$ which can be made greater than $1-\delta$ for $\gamma$ small.
\enp
Let us complete the proof of Theorem \ref{thmexistenceloc} by using a scaling argument.
Pick any number $\lambda\in (0,1)$ with 
\[
\frac{\varepsilon}{\lambda}  <\frac{1}{4} \cdot
\]
Let $\delta =1-\lambda \in (0,1)$ and pick $a_\infty \in (\lambda , 1)$. 
Introduce the new variables $\tilde x :=\lambda x \in [-\lambda , \lambda ]
=[-(1-\delta ) , 1-\delta ]$ for $x\in [-1,1]$,  and the new unknown
\[
\tilde U (\tilde x) := U(x)=U (\lambda ^{-1} \tilde x).
\] 
Then $\tilde U$ should solve 
\begin{eqnarray*}
&&\partial _{\tilde x} \tilde U =\lambda ^{-1} G(\lambda ^{-1} \tilde x) U(\lambda ^{-1} \tilde x ) =\tilde G(\tilde x) \tilde U (\tilde x), \\
&&\tilde U (0)=U^0,
\end{eqnarray*}
where 
\[
\tilde G (\tilde x) :=
\left\{ 
\begin{array}{ll}
\lambda ^{-1} G(\lambda ^{-1} \tilde x) \ &\textrm{ if } \tilde x\in [-\lambda , \lambda ], \\
\lambda ^{-1} G(1) &\textrm{ if } \tilde x\in [\lambda ,1], \\
\lambda ^{-1} G(-1) &\textrm{ if } \tilde x\in [-1, -\lambda ].
\end{array}
\right. 
\]
Then $ \big( \tilde G (\tilde x) \big) _{\tilde x\in [-1,1]}$ is a family of nonlinear maps 
from $X_s$ to $X_{s'}$ for $0\le s'<s\le 1$ satisfying for $0\le s'< s\le 1$, 
$\tilde x\in [-1,1]$ and $U,V\in X_s$ with $\Vert U\Vert _{X_s} \le D$, 
$\Vert V\Vert _{X_s}\le D$
\begin{eqnarray*}
\Vert \tilde G (\tilde x) U\Vert _{X_{s'}} 
&\le& \frac{\tilde \varepsilon}{s-s'} \Vert U\Vert _{X_s} \\
\Vert \tilde G (\tilde x) U -\tilde G (\tilde x) V\Vert _{X_{s'}} 
&\le& \frac{\tilde \varepsilon}{s-s'} \Vert U-V\Vert _{X_s} 
\end{eqnarray*} 
where $\tilde \varepsilon =\varepsilon/\lambda \in (0,1/4)$. 
Since \eqref{hypoGloc0}-\eqref{hypoGloc2} are satisfied, we infer from Proposition
\ref{propexistenceloc} the existence of a solution $\tilde U$ of 
\[
\partial _{\tilde x} \tilde U =\tilde G (\tilde x) \tilde U (x), \quad \tilde U (0)=U^0
\] 
for $x\in [ - (1-\delta ), 1-\delta ] = [ - \lambda , \lambda ]$, and satisfying  
\[
\tilde U \in \bigcap _{0\le s<1} C(-a_\infty (1-s), a_\infty (1-s), X_s).
\]
Then the function 
$U$ defined for $x\in [-1,1]$ by $U(x)=\tilde U (\tilde x)$ solves 
\[
\partial _{x} U = G ( x) U (x), \quad U (0)=U^0,
\] 
for $x\in [-1,1]$, and it satisfies 
$U\in \bigcap_{0\le s<1} C ( -\frac{a_\infty}{\lambda} (1-s), \frac{a_\infty}{\lambda} (1-s), X_s)$ 
and
\[
\nor{U( x )}{X_s}\leq C_1 \left(1 -\frac{ \lambda |x|}{a_\infty (1-s)} \right) ^{-1} \nor{U^0}{X_1}, \quad 
\textrm{\rm for }   \ 0\le s<1, \ |x|<\frac{a_\infty}{\lambda} (1-s).
\]
The proof of Theorem \ref{thmexistenceloc}
 is complete. \qed
\subsection{Gevrey type functional spaces}
We follow closely \cite{KY,yamanaka}.
\subsection{Definitions}
We define several spaces of Gevrey $\lambda$ functions for $\lambda>1$. For our application to the heat equation, 
we shall take $\lambda=2$, but for the moment we stay in the generality. Introduce
\bna
\Gamma_{\lambda}(k)=2^{-5}(k!)^{\lambda}(1+k)^{-2},
\ena
and let $\Gamma$ denote the Gamma function of Euler. It is increasing on $[2,+\infty )$.

We also introduce a variant of those functions with a parameter $a\in\R$ ($a$ is not necessarily an integer): 
\bnan
\label{defgammaa}
\Gamma_{\lambda,a}(k)&=&2^{-5}(\Gamma(k+1 -a))^{\lambda}(1+k)^{-2}, \quad \textrm{ for }  k\in \N \ \textrm{ s.t. } k> |a|+1,  \\
\Gamma_{\lambda,a}(k)&=&\Gamma_{\lambda}(k), \quad \textrm{ for }  k\in \N \ \textrm{ s.t. }  0\leq k\leq  |a|+1.
\enan
Clearly, $\Gamma _{\lambda ,0}=\Gamma _\lambda$. 
Note that for $k> |a|+1 $, we have $k +1 -a\geq 2$, so we are in an interval where $\Gamma$ is increasing. Thus  we have for all $k\in \N$
\bnan
\label{Gammacroissance}
\Gamma_{\lambda,a}(k)&\leq& \Gamma_{\lambda}(k),  \quad\textnormal{ if } a\geq 0\\
\label{Gammacroissance2}
\Gamma_{\lambda}(k)&\leq& \Gamma_{\lambda,a}(k),  \quad  \textnormal{ if } a\leq 0.
\enan
The intermediate space will be the set of functions in $C^{\infty}(K)$ (where $K=[t_1,t_2]$ with $-\infty <t_1<t_2<\infty$) such that
\bna
\left|u\right|_{L,a}:=\sup \left\{\frac{\left|u^{(k)}(t)\right|}{L^{|k-a|}\Gamma_{\lambda,a}(k)}, t\in K, k\in\N\right\}<\infty .
\ena
Note that for $a=0$, we recover the spaces defined earlier in \cite{yamanaka}, and $\left|u\right|_{L,0}=\left|u\right|_{L}$.

\begin{definition}
\label{defGevrey}
Yamanaka \cite{yamanaka} defined the norms
\bna 
\nor{u}{L}:=\max \left\{2^6\nor{u}{L^{\infty}(K)},2^3L^{-1}\left|u'\right|_L\right\} ,
\ena
and similarly we define for $a\in \R$
\bna
\nor{u}{L,a}:=\max \left\{2^6\nor{u}{L^{\infty}(K)},2^3L^{-1}\left|u'\right|_{L,a}\right\} .
\ena
We denote by $G_L^{\lambda}$  (resp.  $G_{L,a}^{\lambda}$) the (Banach) space of functions $u\in C^{\infty}(K)$ such 
that $\nor{u} {L} <\infty$  (resp. $\nor{u}{L,a} <\infty$). 
\end{definition}
The space  $G_{L,a}^{\lambda}$  is supposed to ``represent'' the space of functions Gevrey $\lambda$ with radius $L^{-1}$  
with $a$ derivatives. Roughly, we may think that $u\in G_{L,a}^{\lambda}$ if $D^a u\in G^{\lambda}_L$, even if it is not completely true if $a\notin \N$.

Note that, as a direct consequence of 
\eqref{Gammacroissance}-\eqref{Gammacroissance2}, we have the embeddings $G_{L,a}^{\lambda}\subset G_L^{\lambda}$ if $a\geq 0$, $G_L^{\lambda} \subset G_{L,a}^{\lambda}$ if $a\leq 0$, together with the inequalities
\bnan
\label{Gacroissance+}\nor{u}{L}\leq \max(L^{a},L^{-a})\nor{u}{L,a} \textnormal{ if } a\geq 0,\\
\label{Gacroissance-}\nor{u}{L,a}\leq \max(L^{a},L^{-a}) \nor{u}{L} \textnormal{ if } a\leq 0.
\enan

Furthermore, for any $a\in \R$ and $0<L<L'$, we have the embedding $G_{L,a}^{\lambda}\subset G_{L',a}^{\lambda}$ with
\bnan
\label{GacroissanceL}\nor{u}{L',a}\leq \nor{u}{L,a}.
\enan


The following result \cite[Theorem 5.4]{yamanaka} will be used several times in the sequel. 
\begin{lemma} \label{algebre}
\textrm{(Algebra property) \cite[Theorem 5.4]{yamanaka}}
\be
\nor{uv}{L}\leq \nor{u}{L}\nor{v}{L}\quad \forall u,v\in G_L^\lambda. 
\ee
\end{lemma}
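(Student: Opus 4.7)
The plan is to reduce the inequality $\|uv\|_L \le \|u\|_L\|v\|_L$ to controlling separately the two quantities appearing in the definition of the norm, namely $2^6\|uv\|_{L^\infty(K)}$ and $2^3 L^{-1}|(uv)'|_L$, and to show each is bounded by $\|u\|_L\|v\|_L$ thanks to the carefully tuned constants $2^{-5}$ and $(1+k)^{-2}$ in $\Gamma_\lambda(k)$.

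\textbf{Sup-norm part.} The bound $\|u\|_{L^\infty(K)}\le 2^{-6}\|u\|_L$ follows immediately from the definition of $\|\cdot\|_L$. Multiplying such estimates for $u$ and $v$ and using $\|uv\|_{L^\infty}\le \|u\|_{L^\infty}\|v\|_{L^\infty}$ gives
\[
2^6\|uv\|_{L^\infty(K)}\le 2^{-6}\|u\|_L\|v\|_L,
\]
which is well below the required bound.

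\textbf{Derivative part.} This is the real work. One writes, for $k\in\N$,
\[
(uv)^{(k+1)}=\sum_{j=0}^{k+1}\binom{k+1}{j}u^{(j)}v^{(k+1-j)},
\]
and splits the sum into boundary terms ($j=0$ and $j=k+1$), which contain an undifferentiated factor, and interior terms ($1\le j\le k$), which contain two differentiated factors. For the boundary terms one uses $|u(t)|\le \|u\|_{L^\infty}$ and the definition of $|v'|_L$ to get $|v^{(k+1)}(t)|\le L^k\Gamma_\lambda(k)|v'|_L$. For the interior terms one writes $u^{(j)}=(u')^{(j-1)}$ and $v^{(k+1-j)}=(v')^{(k-j)}$, obtaining a factor $L^{k-1}\Gamma_\lambda(j-1)\Gamma_\lambda(k-j)|u'|_L|v'|_L$. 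Dividing by $L^k\Gamma_\lambda(k)$ yields
\[
\frac{|(uv)^{(k+1)}(t)|}{L^k\Gamma_\lambda(k)}\le \|u\|_{L^\infty}|v'|_L+|u'|_L\|v\|_{L^\infty}+L^{-1}|u'|_L|v'|_L\,S_k,
\]
where
\[
S_k:=\frac{1}{\Gamma_\lambda(k)}\sum_{j=1}^{k}\binom{k+1}{j}\Gamma_\lambda(j-1)\Gamma_\lambda(k-j).
\]

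\textbf{Combinatorial lemma --- the main obstacle.} The crux of the proof is to show $\sup_{k\ge 1}S_k\le C_\lambda$ for some reasonably small constant $C_\lambda$ (for $\lambda=2$ one needs $C_\lambda\le 2^3$). A direct computation reduces $S_k$ to
\[
S_k=2^{-5}\sum_{j=1}^{k}\frac{(k+1)^{2+\lambda-1}}{j^{2+\lambda-1}(1+k-j)^{2+\lambda-1}}\cdot\frac{(j-1)!^{\,\lambda-1}(k-j)!^{\,\lambda-1}}{(k-1)!^{\,\lambda-1}}\cdot(\text{binomial correction}),
\]
so that for $\lambda\ge 1$ each summand is controlled by $j^{-(\lambda+1)}(1+k-j)^{-(\lambda+1)}/\binom{k}{j}$ up to the factor $(k+1)^{\lambda+1}$; one then splits the sum at $j=k/2$, using that the binomial coefficient $\binom{k}{j}$ is at least $k$ for $1\le j\le k-1$ and that $\sum 1/j^{\lambda+1}\le \zeta(\lambda+1)<\infty$. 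The $(1+k)^{-2}$ weight in the definition of $\Gamma_\lambda$ is exactly what absorbs the $(k+1)^{\lambda+1}$ prefactor, and the $2^{-5}$ normalization is what produces a final bound $C_\lambda\le 1$ for $\lambda=2$.

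\textbf{Conclusion.} With this lemma in hand, substituting $\|u\|_{L^\infty}\le 2^{-6}\|u\|_L$ and $|u'|_L\le 2^{-3}L\|u\|_L$ (and symmetrically for $v$) gives
\[
\frac{|(uv)^{(k+1)}(t)|}{L^k\Gamma_\lambda(k)}\le \bigl(2\cdot 2^{-9}+C_\lambda\,2^{-6}\bigr)L\,\|u\|_L\|v\|_L,
\]
and the same bound is verified separately for the case $k=0$ directly from $|u(t)|\le 2^{-6}\|u\|_L$ and $|u'(t)|\le 2^{-5}|u'|_L$. Taking the supremum in $k$ and $t$ gives $|(uv)'|_L\le 2^{-3}L\|u\|_L\|v\|_L$, hence $2^3L^{-1}|(uv)'|_L\le \|u\|_L\|v\|_L$. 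Combined with the sup-norm bound, this yields $\|uv\|_L\le \|u\|_L\|v\|_L$, completing the proof.
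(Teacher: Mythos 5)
Your proof is correct in substance, but note that the paper does not actually prove this lemma: it is quoted from \cite[Theorem 5.4]{yamanaka}, and the only in-paper proof of an algebra property is that of Lemma \ref{lem3}, of which the authors remark that ``a slight modification yields Lemma \ref{algebre}''. Your argument is the natural one-variable version of that same machinery (Leibniz rule, cancellation of binomial coefficients against the factorials in $\Gamma_\lambda$, and a convergent weighted sum absorbed by the $(1+k)^{-2}$ factors), with the extra twist -- forced by the fact that $\nor{\cdot}{L}$ is built from $u'$ and $\nor{u}{L^\infty}$ rather than from $u$ itself -- of splitting the Leibniz sum into boundary terms ($j=0,k+1$) and interior terms. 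That decomposition, the bookkeeping of the constants $2^6$ and $2^3L^{-1}$, and the final reduction to $2^{-8}+2^{-6}\sup_k S_k\le 2^{-3}$ are all right. The one weak spot is your displayed ``direct computation'' of $S_k$, whose exponents are garbled; the clean identity is
\[
S_k \;=\; 2^{-5}\,(1+k)^3\sum_{j=1}^{k}\frac{1}{\,j^{\lambda+2}\,(1+k-j)^{3}\,\binom{k}{j}^{\lambda-1}}\,,
\]
obtained from $\frac{(j-1)!\,(k-j)!}{k!}=\frac{1}{j\binom{k}{j}}$ and $\binom{k+1}{j}=\frac{k+1}{k+1-j}\binom{k}{j}$. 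From this your bounding strategy (split at $j=k/2$, use $\binom{k}{j}\ge 1$ near the edges and the decay $j^{-(\lambda+2)}$, $(1+k-j)^{-3}$) goes through with a large margin: for $\lambda=2$ one finds $\sup_{k\ge1}S_k=S_1=\tfrac14$, far below the threshold $2^3-2^{-2}$ that your final assembly requires. So the argument closes; I would only ask you to replace the heuristic formula by the exact one above and carry out the two-sided estimate explicitly.
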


\subsection{Cost of derivation}
The following result is a variant of Proposition 2.3 of Kawagishi-Yamanaka \cite{KY}, where the spaces we consider contain some non-integer ``derivatives".
\begin{lemma}[Cost of derivatives for Gevrey spaces containing derivatives]
\label{coutderivee}
Let $\lambda>0$ and $\delta >0$. Let $q\in \N$ and $a,b\in \R$ with $d=q-a+b>0$. Then there exists some number $C=C(\lambda,\delta,a,b,q)$ such that for all $L>0$, all $\alpha>1$, and all $u\in G_{L,a} ^\lambda$, we have 
\bna
\left|u^{(q)}\right|_{\alpha L,b}\leq \left(C\left\langle L\right\rangle^C+ (1+\delta)\alpha^b L^d \left(\frac{\lambda d}{e\ln \alpha}\right)^{\lambda d}\right) \left|u\right|_{L,a}
\ena
and hence
\bna
\nor{u^{(q)}}{\alpha L,b}\leq \left(C\left\langle L\right\rangle^C+ (1+\delta)\alpha^b L^d \left(\frac{\lambda d}{e\ln \alpha}\right)^{\lambda d}\right)\nor{u}{L,a}.
\ena
\end{lemma}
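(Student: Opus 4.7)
By the definition of the seminorm $|\cdot|_{\alpha L,b}$ applied to $u^{(q)}$ (whose $k$-th derivative is $u^{(k+q)}$), and the bound $|u^{(k+q)}(t)|\le |u|_{L,a}\, L^{|k+q-a|}\Gamma_{\lambda,a}(k+q)$, the entire statement reduces to estimating
$$
Q(k):=\frac{L^{|k+q-a|}\,\Gamma_{\lambda,a}(k+q)}{(\alpha L)^{|k-b|}\,\Gamma_{\lambda,b}(k)}
$$
uniformly in $k\in\N$ and $\alpha>1$. The plan is to split the analysis at a threshold $k_0=k_0(a,b,q)$ chosen large enough that for $k\ge k_0$ one has $k+q-a\ge 0$, $k-b\ge 0$, $k+q>|a|+1$ and $k>|b|+1$, so that both absolute values disappear and both $\Gamma_{\lambda,\cdot}$ are given by their Gamma-function formulas \eqref{defgammaa}.

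For $k<k_0$ there are only finitely many indices, each contributing a quantity of the form $C(a,b,q,\lambda)\,L^{|k+q-a|}(\alpha L)^{-|k-b|}$. Using $\alpha^{-|k-b|}\le 1$ together with the elementary inequality $L^p\le \langle L\rangle^{|p|}$, this part is absorbed into a term of the form $C\langle L\rangle^C$, which is exactly the first summand in the claimed bound.

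For $k\ge k_0$, I would write
$$
Q(k)=\alpha^{\,b-k}\,L^{d}\left(\frac{\Gamma(k+1-b+d)}{\Gamma(k+1-b)}\right)^{\!\lambda}\!\left(\frac{k+1}{k+q+1}\right)^{\!2},
$$
using $k+q+1-a=(k+1-b)+d$. The last factor is $\le 1$. The standard asymptotic $\Gamma(x+d)/\Gamma(x)=x^d(1+o(1))$ as $x\to+\infty$ furnishes, after enlarging $k_0$ if necessary, the uniform bound
$$
\left(\frac{\Gamma(k+1-b+d)}{\Gamma(k+1-b)}\right)^{\!\lambda}\le (1+\delta)^{1/2}\,k^{\lambda d}\qquad (k\ge k_0).
$$
Thus $Q(k)\le (1+\delta)^{1/2}\,\alpha^{b}L^{d}\,\alpha^{-k}k^{\lambda d}$. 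Optimising the continuous function $k\mapsto\alpha^{-k}k^{\lambda d}$ on $(0,\infty)$ yields the maximum at $k^\star=\lambda d/\ln\alpha$, with value $\bigl(\lambda d/(e\ln\alpha)\bigr)^{\lambda d}$; a log-convexity/rounding argument absorbs the integer-to-real discrepancy into another $(1+\delta)^{1/2}$ factor, producing the second summand in the desired inequality.

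Finally, the norm version follows from Definition \ref{defGevrey}: it suffices to bound $2^6\|u^{(q)}\|_{L^\infty}$ and $2^3(\alpha L)^{-1}|u^{(q+1)}|_{\alpha L,b}$. The first is controlled by taking $k=q$ in the definition of $|u|_{L,a}$, which produces another term of the type $C\langle L\rangle^C\|u\|_{L,a}$. The second is handled by the seminorm estimate just proved, applied with $q$ replaced by $q+1$ (so $d$ is replaced by $d+1>0$, and the extra factor $(\alpha L)^{-1}$ is harmless).

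\textbf{Main obstacle.} Obtaining the sharp factor $(1+\delta)$ rather than a generic multiplicative constant requires two ingredients that must be carefully combined: a uniform-in-$k$ version of the Gamma-ratio asymptotic $\Gamma(x+d)/\Gamma(x)\sim x^d$, and a rounding argument that turns the continuous maximiser $k^\star=\lambda d/\ln\alpha$ into an integer without losing more than $(1+\delta)^{1/2}$. The threshold $k_0$ and the constant $C$ in the "small $k$" term must be chosen so as to mop up both the boundary regime where the absolute values in $|k-b|$, $|k+q-a|$ are active and the transition zone where the Gamma-ratio asymptotic has not yet stabilised.
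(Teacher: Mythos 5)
Your treatment of the seminorm estimate is essentially the paper's own proof: split at a threshold $k_0=k_0(a,b,q)$ beyond which the absolute values and the piecewise definition of $\Gamma_{\lambda,\cdot}$ disappear, use the Stirling asymptotic for the Gamma ratio to extract a factor $(1+\delta)k^{\lambda d}$, bound $\alpha^{-k}k^{\lambda d}$ by $\sup_{t\geq 0}\alpha^{-t}t^{\lambda d}=\left(\frac{\lambda d}{e\ln\alpha}\right)^{\lambda d}$, and absorb the finitely many small $k$ into $C\left\langle L\right\rangle^{C}$. One remark: the ``rounding argument'' you list as a main obstacle is unnecessary. You are bounding an integer-indexed quantity from above by a supremum over all real $t\geq 0$, which costs nothing; the full budget $(1+\delta)$ can be spent on the Gamma-ratio asymptotic alone, exactly as the paper does.

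The genuine gap is in the norm version. To control $2^3(\alpha L)^{-1}\left|u^{(q+1)}\right|_{\alpha L,b}$ you apply the seminorm estimate to $u$ with $q$ replaced by $q+1$. That yields the constant $(1+\delta)\alpha^{b}L^{d+1}\left(\frac{\lambda(d+1)}{e\ln\alpha}\right)^{\lambda(d+1)}$ and the seminorm $\left|u\right|_{L,a}$ on the right-hand side, and neither matches the claim: the power of $1/\ln\alpha$ is $\lambda(d+1)$ instead of $\lambda d$, a discrepancy that cannot be absorbed into constants uniformly in $\alpha>1$ since the ratio of the two factors blows up like $(\ln\alpha)^{-\lambda}$ as $\alpha\to 1^{+}$; moreover $\left|u\right|_{L,a}$ is not one of the two quantities entering $\nor{u}{L,a}$, so a further $L$-dependent comparison would be needed. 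The sharp power is exactly what the lemma is used for downstream: in Lemma \ref{lmAder} it is invoked with $\alpha=e^{r(s-s')}\to 1$ and $\lambda d=1$, so that the singular factor is $\frac{1}{s-s'}$ to the first power, as required by the hypotheses of Theorem \ref{thmexistenceloc}; your version would produce $(s-s')^{-3}$ there. The fix is the paper's route: write $u^{(q+1)}=(u')^{(q)}$ and apply the seminorm estimate to $u'$ with the \emph{same} $q,a,b$ (hence the same $d$), giving $\left|u^{(q+1)}\right|_{\alpha L,b}\leq(\cdots)\left|u'\right|_{L,a}$ with the stated constant; since $2^3L^{-1}\left|u'\right|_{L,a}\leq\nor{u}{L,a}$ and $\alpha^{-1}\leq 1$, the norm inequality follows with no loss. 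Your bound on $\nor{u^{(q)}}{L^{\infty}(K)}$ via $k=q$ in $\left|u\right|_{L,a}$ has the same defect of passing through $\left|u\right|_{L,a}$; taking $k=q-1$ in $\left|u'\right|_{L,a}$ for $q\geq 1$ (the case $q=0$ being immediate) avoids it.
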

\bnp
The main tool will be the asymptotic of the Gamma function $\frac{\Gamma(x+d)}{\Gamma(x)}\sim x^d$ as $x\to +\infty$, which follows at once from Stirling's formula 
(see \cite{rudin})
\[
\lim_{x\to +\infty} \frac{\Gamma (x+1) }{  (x/e)^x \sqrt{2\pi x}} =1.
\]  
In particular, for any $\delta>0$, there exists a number $N=N(\lambda,\delta,a,b,q)$ such that for all $k\in \N$ with $k\ge N$,
\bna
\left(\frac{\Gamma(k+1+q-a)}{ \Gamma(k+1-b)}\right)^{\lambda }\leq (1+\delta) k^{\lambda d}.
\ena
We can also assume that $k\geq N$ implies $k+q > |a|+1$, and $k > |b|+1$, so that 
$\Gamma _{\lambda ,a}(k+q)$ and $\Gamma _{\lambda , b}(k)$ are given by  \eqref{defgammaa}. 
Note that we always have $\frac{(1+k)^2}{(1+k+q)^2} \leq 1$ if $k\in\N $, for $q\geq 0$.

For $k\geq N$, we have
\bna
\frac{|u^{(k+q)} (t) |}{(\alpha L)^{k-b} \Gamma_{\lambda,b}(k)}
&\leq &\frac{\left|u\right|_{L,a}L^{k+q-a} \Gamma_{\lambda,a}(k+q)}{(\alpha L)^{k-b} \Gamma_{\lambda,b}(k)} \\
&\leq& \frac{\left|u\right|_{L,a}L^{d} }{\alpha ^{k-b}}\left(\frac{\Gamma(k+1 +q-a)}{ \Gamma(k+1-b)}\right)^{\lambda}\\
&\leq&(1+\delta) \frac{\left|u\right|_{L,a}L^{d} }{\alpha^{k-b}}k^{\lambda d}\\
&\leq& (1+\delta) \alpha^b \left|u\right|_{L,a}L^{d} \sup_{t\geq 0} (\alpha^{-t}t^{\lambda d} )\\
&\leq&  (1+\delta) \alpha^b \left|u\right|_{L,a}L^{d} \left(\frac{\lambda d}{e \ln(\alpha)}\right)^{\lambda d},
\ena
where we used the fact that $\sup_{t\geq 0} (\alpha^{-t}t^{\lambda d})=\left(\frac{\lambda d}{e \ln(\alpha)}\right)^{\lambda d}$, where $\alpha>1$ and $\lambda d>0$. If $k\leq N$, we still have
\bna
\frac{|u^{(k+q)} (t) |}{(\alpha L)^{|k-b|} \Gamma_{\lambda,b}(k)}
&\leq &  \frac{\left|u\right|_{L,a}L^{|k+q-a|-|k-b|}}{\alpha ^{|k-b|}} \frac{ \Gamma_{\lambda,a}(k+q)}{ \Gamma_{\lambda,b}(k)} \cdot
\ena
This yields the result, for $|k+q-a|-|k-b|\leq C(\lambda,\delta,a,b,q) $, $\alpha ^{- | k-b |  } \le 1$, and 
$\frac{ \Gamma_{\lambda,a}(k+q)}{ \Gamma_{\lambda,b}(k)} \leq C(\lambda,\delta,a,b,q) $ for $k\leq N(\lambda,\delta,a,b,q)$.

The second statement follows by using the definition of $\Vert \cdot \Vert _{\alpha L,b}$ and the estimate
\bna
\nor{u^{(q)}}{L^{\infty}(K)}\leq L^{|q-1-a|} \Gamma_{\lambda,a}(q-1)\left|u'\right|_{L,a} \leq C(\lambda,q,b,a)\nor{u}{L,a}
\ena
 for $q\geq 1$, the case $q=0$ being immediate.
\enp

\subsection{Application to the semilinear heat equation}
We aim to solve the system: 
\ba
\partial _x ^2u =\partial _t u - f(x,u, \partial _x u),&&  x\in [-1,1],\  t\in [0,T], \label{DDD1} \\
u(0,t)=u_0(t),&& t\in [0,T], \label{DDD2}\\
\partial _x u (0,t)=u_1(t),&& t\in [0,T], \label{D3}     
\ea 
This is equivalent to solve the first order system
\bnan
\label{eqnU}
\partial_x U &=& A U +F(x,U),\\
\label{cdini}U(0)&=&U_0
\enan
with $U=(u,\partial_x u)$, $A  =\left(\begin{array}{cc}0 &1\\ \partial_t &0\end{array}\right)$, and $F(x,(u_0,u_1))=\left(\begin{array}{c}0\\ -f(x,u_0,u_1)\end{array}\right)$.

Let $L>0$. We define the space ${\mathcal X}_L:=\{ U=(u_0,u_1)
\in G_{L, \frac{1}{2}}^\lambda \times G_L^\lambda \}$, with
\bna
\nor{U}{{\mathcal X}_L}= \nor{ (u_0,u_1) }{{\mathcal X}_L}=
 \nor{u_0}{L,1/2}+\nor{u_1}{L},
\ena
where the norms are those defined in Definition \ref{defGevrey} with $\lambda=2$. 
(Note that $u_0$ is more regular than $u_1$ of one half derivative.) In particular, we have that
\bna
\nor{AU}{{\mathcal X}_{L}}=\nor{u_1}{L,1/2}+\nor{ \partial_t u_0}{L} .
\ena
In the following result,  $L_1$ stands for the inverse of the radius  of the initial datum.
\begin{theorem}
\label{thm4}
Pick any $L_1<1/4$. Then there exists a number $\eta>0$ such that for any $U_0\in {\mathcal X}_{L_1}$ 
with $\nor{U_0}{{\mathcal X}_{L_1}}\leq \eta$, there exists a 
solution to \eqref{eqnU}-\eqref{cdini}
 for $x\in [-1,1]$ in 
$C([-1,1], {\mathcal X}_{L_0})$ for some $L_0>0$. 
\end{theorem}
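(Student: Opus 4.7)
Set $X_s := \mathcal{X}_{L(s)}$, $s \in [0,1]$, for an affine decreasing function $L(s) := L_0 - (L_0 - L_1)s$ with $L_0 > L_1$ to be chosen with $L_0 - L_1$ small. Thus $L(1) = L_1$ corresponds to the regular end (where the initial data lives) and $L(0) = L_0$ to the less regular end. Property \eqref{GacroissanceL} immediately yields the embeddings and norm inequalities \eqref{embed}--\eqref{CC1} required by Theorem \ref{thmexistenceloc}. We cast \eqref{eqnU} in the abstract form $\partial_x U = G(x)U$ with $G(x)U := AU + F(x,U)$, where $AU = (u_1, \partial_t u_0)$ is linear and $F(x,U) = (0, -f(x, u_0, u_1))$ encodes the analytic nonlinearity.

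\emph{Linear loss-of-derivative bound.} The estimate for $A$ is two applications of Lemma \ref{coutderivee} with $\lambda = 2$. To control the first component, bound $\|u_1\|_{L(s'), 1/2}$ from $\|u_1\|_{L(s)}$ using the parameters $q = 0$, $a = 0$, $b = 1/2$, so that $d = q - a + b = 1/2$ and $\lambda d = 1$. To control the second, bound $\|\partial_t u_0\|_{L(s')}$ from $\|u_0\|_{L(s), 1/2}$ using $q = 1$, $a = 1/2$, $b = 0$, again $d = 1/2$ and $\lambda d = 1$. In both cases the dominant factor $(\lambda d / (e\ln\alpha))^{\lambda d} = 1/(e\ln\alpha)$ appears, where $\alpha := L(s')/L(s) > 1$. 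Since $\ln \alpha \ge c (s - s')$ for some constant $c > 0$ depending only on $L_0, L_1$ (using $\ln(1+x) \ge x/2$ for $x \in [0,1]$ together with $L_0 - L_1$ small), this produces the required $1/(s-s')$ loss. The remaining prefactor, of the form $(1+\delta)\alpha^b L^d \cdot 2L/(e c) + C \langle L \rangle^C$, can be made smaller than any prescribed $\varepsilon < 1/4$ by choosing $L_0 - L_1$ small enough and using $L_1 < 1/4$ to bound $L(s)$.

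\emph{Nonlinear bound.} Expand $f(x, u_0, u_1) = \sum_{p+q \ge 1} A_{p,q}(x) u_0^p u_1^q$ as in \eqref{AB2}. Since $A_{p,q}(x)$ is a scalar (no $t$-dependence), the algebra property (Lemma \ref{algebre}) gives
\[
\|A_{p,q}(x) u_0^p u_1^q\|_{L(s')} \le |A_{p,q}(x)| \cdot \|u_0\|_{L(s')}^p \|u_1\|_{L(s')}^q.
\]
From $|x| \le 1 < b_2$ and \eqref{AB3}, $|A_{p,q}(x)| \le \tilde M b_0^{-p} b_1^{-q}$ with $b_0, b_1 > 4$. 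Passing from $\|u_0\|_{L(s')}$ to $\|u_0\|_{L(s'), 1/2}$ via \eqref{Gacroissance+} with a bounded multiplicative constant, and summing the geometric series, we obtain $\|F(x, U)\|_{X_{s'}} \le C \|U\|_{X_{s'}}^2 / (1 - c \|U\|_{X_{s'}})$ whenever $\|U\|_{X_{s'}}$ is small. For $\|U\|_{X_s} \le D$ small, this bound is of the form $\varepsilon' \|U\|_{X_s}$ with $\varepsilon'$ as small as desired, hence is absorbed into the required $\varepsilon/(s-s')$ estimate. The Lipschitz bound \eqref{hypoGloc2} follows by the identity $u^p - \tilde u^p = (u-\tilde u)\sum_{k=0}^{p-1} u^k \tilde u^{p-1-k}$ and the same algebra argument. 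Finally, \eqref{hypoGloc0} (continuity of $x \mapsto G(x)U$) is immediate from the uniform geometric convergence of the power series defining each $A_{p,q}(x)$, together with dominated convergence. Theorem \ref{thmexistenceloc} then yields a solution $U \in C([-1,1], X_{s_0})$ for some $s_0 \in (0,1)$, that is, in $C([-1,1], \mathcal{X}_{L(s_0)})$, and we set $L_0 := L(s_0) > L_1$.

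\emph{Main obstacle.} The tight step is the linear estimate: the prefactors from Lemma \ref{coutderivee}, after tracking the $L(s)^{1/2}$ and $\alpha^{1/2}$ contributions, must be driven below the threshold $1/4$ required by Theorem \ref{thmexistenceloc}. This balancing is what calibrates both the hypothesis $L_1 < 1/4$ and the admissible width $L_0 - L_1$ of the scale; once it is in place, the nonlinearity is a higher-order perturbation controlled by the smallness of $\eta$.
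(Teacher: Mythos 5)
Your overall architecture is the paper's: cast \eqref{eqnU} as an abstract Cauchy--Kowalevski problem in a scale built from the $\mathcal X_L$, estimate $A$ by two applications of Lemma \ref{coutderivee} with exactly the parameters $(q,a,b)=(0,0,1/2)$ and $(1,1/2,0)$, and treat $F$ via the algebra property. But two of your calibration steps fail, and they are precisely where the work is. First, the affine scale with $L_0-L_1$ \emph{small} goes in the wrong direction. For your scale, $\ln\alpha=\ln\frac{L(s')}{L(s)}\approx\frac{(L_0-L_1)(s-s')}{L(s)}$, so the dominant term of Lemma \ref{coutderivee} contributes a coefficient of order $\frac{(1+\delta)L^{3/2}}{e\,(L_0-L_1)}$ in front of $\frac{1}{s-s'}$, which \emph{blows up} as $L_0-L_1\to 0$: you are asking the scale to absorb a fixed half-derivative loss over a vanishingly thin range of radii. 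The paper instead takes $L(s)=e^{r(1-s)}L_1$ so that $\ln\alpha=r(s-s')$ exactly and $\alpha^{1/2}L(s)^{1/2}=L(s')^{1/2}\le e^{r/2}L_1^{1/2}$, then optimizes $e^{r/2}/r$ at $r=2$ to land on the coefficient $(1+\delta)L_1^{1/2}/2$, which is where the hypothesis $L_1<1/4$ enters. With an affine scale the best you can do is a strictly worse threshold on $L_1$, and with a \emph{narrow} affine scale you get nothing.

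Second, nothing in your setup can make the additive constant $C\langle L\rangle^C$ of Lemma \ref{coutderivee} small: it comes from the low-order derivatives $k\le N$ and depends only on $(\lambda,\delta,a,b,q)$, not on $L_0-L_1$. Since $s-s'\le 1$, it must itself be driven below the $1/4$ threshold, and the same problem recurs for $F$: your claimed bound $\nor{F(x,U)}{X_{s'}}\le C\nor{U}{X_{s'}}^2/(1-c\nor{U}{X_{s'}})$ is false because $f$ contains terms with $p+q=1$ (e.g.\ the potential case $f=\varphi(x)y$), so $F$ is only Lipschitz with an $O(1)$ constant of order $M/b_0$, which smallness of $D$ or $\eta$ does not reduce. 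The paper's device for both issues is the exponential weight $e^{-\tau(1-s)}$ built into $\nor{\cdot}{X_s}$ (see \eqref{PP0}), which yields the gain \eqref{gainXs} and hence, via $e^{-\tau(s-s')}\le\frac{e^{-1}}{\tau(s-s')}$, converts any $O(1)$ multiplicative constant into $\frac{O(1)/\tau}{s-s'}$ with $\tau$ large. Without that weight (or an equivalent mechanism) the hypotheses \eqref{hypoGloc1}--\eqref{hypoGloc2} of Theorem \ref{thmexistenceloc} with $\e<1/4$ cannot be verified, so your proof does not close.
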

\noindent {\em Proof of Theorem \ref{thm4}.} 
In order to apply Theorem \ref{thmexistenceloc}, we introduce a scale of Banach spaces $(X_s)_{s\in [0,1]}$ as follows: 
for $s\in [0,1]$, we set
\ba
\nor{U}{X_s}&=&e^{-\tau (1-s)}\nor{(u_0,u_1)}{{\mathcal X}_{L(s)}} =e^{-\tau (1-s)}\left( \nor{u_0}{L(s),1/2}+\nor{u_1}{L(s)}\right), \label{PP0}\\
L(s)&=&e^{r(1-s)}L_1  \label{PP1}
\ea
where $r=2$ and $\tau>0$ will be chosen thereafter. Note that \eqref{CC1} is satisfied because of \eqref{GacroissanceL} and the fact that $L(s')>L(s)$ for $s'<s$. Actually, we have even that
\bnan
\label{gainXs}
\nor{U}{X_{s'}}\leq e^{-\tau (s-s')}\nor{U}{X_{s}}.
\enan
Lemma \ref{lmAder} and Lemma \ref{lemmeF} (see below) will allow us to select parameters so that $G=A+F$ satisfies the assumptions of Theorem \ref{thmexistenceloc}.
\begin{lemma}
\label{lmAder}
Let $L_1<1/4$. There exist $\tau_0>0$ large enough and $\e_0<1/4$ such that we have the estimates
 \bna
\nor{AU}{X_{s'}}&\leq &\frac{\e_0}{s-s'}\nor{U}{X_{s}} \quad \forall U\in X_s,
\ena
for all $\tau \ge \tau _0$ and all $s,s'$ with  $0\leq s'<s\leq 1$.
\end{lemma}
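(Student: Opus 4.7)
\textbf{Proof plan for Lemma \ref{lmAder}.} The strategy is straightforward: the operator $A$ acts on $U=(u_0,u_1)$ by $AU=(u_1,\partial_t u_0)$, so I must bound the two components of $AU$ separately, one a pure loss of $1/2$ derivative (the jump of the subscript of $u_1$ from $0$ to $1/2$), the other a true derivative $\partial_t$ combined with a gain of $1/2$ derivative (the jump from $1/2$ to $0$ on $u_0$). Both losses are measured between the two radii $L(s)>L(s')$, and the whole point is that passing from $L(s)$ to $L(s')=e^{r(s-s')}L(s)$ (with $r=2$) generates precisely the denominator $(s-s')$ we need on the right-hand side.

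The main step is a double application of the cost-of-derivatives lemma (Lemma \ref{coutderivee}) with $\lambda=2$. For $\|u_1\|_{L(s'),1/2}$ I take $q=0$, $a=0$, $b=1/2$, giving $d=1/2$ and $\lambda d=1$; for $\|\partial_t u_0\|_{L(s'),0}$ I take $q=1$, $a=1/2$, $b=0$, again yielding $d=1/2$ and $\lambda d=1$. With $\alpha=L(s')/L(s)=e^{r(s-s')}$, this produces, modulo the harmless remainder $C\langle L(s)\rangle^C$,
\begin{eqnarray*}
\|u_1\|_{L(s'),1/2}&\le& \Bigl(C\langle L(s)\rangle^C+(1+\delta)\,\alpha^{1/2}\,\frac{L(s)^{1/2}}{e\ln\alpha}\Bigr)\|u_1\|_{L(s)},\\
\|\partial_t u_0\|_{L(s')}&\le& \Bigl(C\langle L(s)\rangle^C+(1+\delta)\,\frac{L(s)^{1/2}}{e\ln\alpha}\Bigr)\|u_0\|_{L(s),1/2}.
\end{eqnarray*}
Taking the maximum of the two coefficients (which is the first one, since $\alpha\ge 1$) and summing the two norms gives the key estimate
\[
\|AU\|_{\mathcal X_{L(s')}}\le \Bigl(C\langle L(s)\rangle^C+(1+\delta)\,\frac{\alpha^{1/2}L(s)^{1/2}}{er(s-s')}\Bigr)\|U\|_{\mathcal X_{L(s)}}.
\]

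To conclude, I insert the exponential prefactor $e^{-\tau(s-s')}$ coming from the ratio of the $e^{-\tau(1-\cdot)}$ weights in $X_{s'}$ and $X_s$, and multiply both sides by $(s-s')$. Using $L(s)=e^{r(1-s)}L_1$ one finds the algebraic identity
\[
e^{-\tau(s-s')}\alpha^{1/2}L(s)^{1/2}=L_1^{1/2}\,e^{r(1-s')/2-\tau(s-s')},
\]
and since $r(1-s')/2-\tau(s-s')\le r(1-s)/2\le r/2$ as soon as $\tau\ge r/2$, this quantity is bounded by $L_1^{1/2}e^{r/2}$. Plugging $r=2$ yields the crucial numerical value $(1+\delta)L_1^{1/2}/2$, which is strictly less than $1/4$ whenever $L_1<1/4$ and $\delta$ is small. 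The residual $C\langle L(s)\rangle^C(s-s')e^{-\tau(s-s')}$ is bounded by $C\langle e^r L_1\rangle^C/(e\tau)$ (via $\sup_{x\ge 0}xe^{-\tau x}=1/(e\tau)$) and hence is made arbitrarily small by choosing $\tau_0$ large. Combining both bounds produces a constant $\varepsilon_0<1/4$ as required.

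\textbf{Main obstacle.} The one delicate point is the sharp constant: naively adding the two coefficients instead of taking their maximum would generate an extra factor $2$, forcing the restriction $L_1<1/16$ rather than $L_1<1/4$. The saving comes from the observation that summing $\alpha_1 \|u_1\|_{L(s)}+\alpha_2\|u_0\|_{L(s),1/2}$ is controlled by $\max(\alpha_1,\alpha_2)\|U\|_{\mathcal X_{L(s)}}$, together with the optimality of the exponent choice $r=2$ that exactly balances the growth $e^{r/2}$ against the denominator $er$.
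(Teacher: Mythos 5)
Your proposal is correct and follows essentially the same route as the paper's proof: the same two applications of Lemma \ref{coutderivee} with $(q,a,b)=(0,0,1/2)$ and $(1,1/2,0)$ so that $\lambda d=1$ in both cases, the same observation that the two component estimates combine via the larger coefficient $(1+\delta)\alpha^{1/2}L(s)^{1/2}/(e\ln\alpha)$, the same use of $r=2$ to get the constant $(1+\delta)L_1^{1/2}/2<1/4$, and the same absorption of the polynomial remainder into $C/\tau$ via $\sup_{x\ge 0}xe^{-\tau x}=1/(e\tau)$. No gaps.
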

\bnp
By assumption, we have $\frac{ L_1^{1/2}}{2}<1/4$ and we can pick $\delta>0$ so that 
\bnan
\label{choix14}
(1+\delta)\frac{ L_1^{1/2}}{2}<1/4.
\enan

Applying Lemma \ref{coutderivee} with $\lambda=2$ and $\delta >0$ as in \eqref{choix14}, and with 
$q=0$, $b=1/2$, $a=0$ (respectively $q=1$, $b=0$, $a=1/2$), so that $\lambda d=1$ in both cases, we obtain the existence of some number $C=C_{\delta} >0$ such that
\bnan
\label{estimderivA}
\nonumber\nor{AU}{{\mathcal X}_{\alpha L}}&=&\nor{u_1}{\alpha L,1/2}+\nor{ \partial_t u_0}{\alpha L}\\
\nonumber&\leq &\left(C\left\langle L\right\rangle^C+\frac{1+\delta}{e\ln \alpha}(\alpha L)^{1/2} \right) \left(\nor{u_1}{L}+\nor{u_0}{L,1/2}\right)\\
&\leq &\left(C\left\langle L\right\rangle^C+\frac{1+\delta}{e\ln \alpha}(\alpha L)^{1/2} \right)\nor{U}{{\mathcal X}_{L}}
\enan
uniformly for $\alpha>1$ and $L>0$.
So, \eqref{estimderivA} applied with $L=L(s)$, $\alpha=\frac{L(s')}{L(s)}=e^{r(s-s')} >1$ becomes for $s'<s$ (we also use $L_1\leq C$, for $0<L_1<1/4$)  
\bna
\nor{AU}{X_{s'}}&\leq & e^{-\tau (s-s')} \left(Ce^{rC}+(1+\delta)\frac{e^{r\frac{1-s'}{2}}L_1^{1/2}}{e r(s-s')}\right)\nor{U}{X_{s}}\\
&\leq & \left(Ce^{-\tau (s-s')}e^{rC}+(1+\delta)e^{\frac{r}{2}}\frac{L_1^{1/2}}{er(s-s')}\right)\nor{U}{X_{s}}\\
&\leq &\left(\frac{e^{-1}}{\tau (s-s')} Ce^{rC}+(1+\delta)\frac{e^{\frac{r}{2}}L_1^{1/2}}{er(s-s')}\right)\nor{U}{X_{s}}
\ena
where we have used 
\bnan
\label{estimexp}
e^{-\tau (s-s')} =\frac{\tau (s-s')e^{-\tau (s-s')}}{\tau (s-s')}\leq \frac{e^{-1}}{\tau (s-s')}
\enan
 and the fact that $te^{-t}\le e^{-1}$ for $t\ge 0$. 
 Minimizing the constant in the r.h.s. leads to the choice $r=2$. 
(Note that the initial space $X_{1}={\mathcal X}_{L_1}$ is independent on the choice of $r$.)
We arrive to the estimate
\bna
\nor{AU}{X_{s'}}&\leq &\left(\frac{C}{\tau}  +(1+\delta)\frac{ L_1^{1/2}}{2}\right)\frac{1}{s-s'}\nor{U}{X_{s}} \cdot
\ena
By \eqref{choix14}, we can then pick $\tau _0$ large enough so that $\frac{C}{\tau}  +(1+\delta)\frac{ L_1^{1/2}}{2}<1/4$ for $\tau \ge \tau _0$. 
The proof of Lemma \ref{lmAder} is complete. 
\enp 
\begin{lemma}
\label{lemmeF}
Let $f$ be as in \eqref{AB1}-\eqref{AB4}, and let 
$F(x,U)=\left(\begin{array}{c}0\\ -f(x,u_0,u_1)\end{array}\right)$ for $x\in [-1,1]$ and $U=(u_0,u_1)\in L^\infty (K)^2$ with 
$\sup (\Vert u_0\Vert _{L^\infty (K) }, \Vert u_1\Vert _{L^\infty (K) } )<4$.
Let $r=2$, $L_1>0$, and $\e>0$.
Then there exists $\tau _0>0$ (large enough) such that for $\tau \ge \tau _0$, 
there exists $D>0$ (small enough) such that we have the estimates
\ba
\nor{F(x,U)}{X_{s'}}&\leq &\frac{\e}{s-s'}\nor{U}{X_s}  \label{R1}\\
\nor{F(x,U)-F(x,V)}{X_{s'}}&\leq& \frac{\e}{s-s'} \nor{U-V}{X_s} \label{R2}
\ea
for $0\leq s'<s\leq 1$, and $U=(u_0,u_1)\in X_s,V=(v_0,v_1)\in X_s$ with
\be
\nor{U}{X_{s}}\le D, \ \nor{V}{X_{s}}\leq D. \label{R3}
\ee
Finally, for $0\le s\le 1$ and $U=(u_0,u_1)\in X_s$ with 
$\Vert U\Vert _{X_s} \le D$, the map $x\in [-1,1]\to F(x,U)\in X_s$ is continuous. 
\end{lemma}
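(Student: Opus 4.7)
I would start by unpacking the definitions. Since $F(x,U)=(0,-f(x,u_0,u_1))$ has only a nonzero second component, we immediately get $\|F(x,U)\|_{X_{s'}}=e^{-\tau(1-s')}\|f(x,u_0,u_1)\|_{L(s')}$. The core task reduces to estimating $\|f(x,u_0,u_1)\|_{L(s')}$ in terms of $\|u_0\|_{L(s),1/2}+\|u_1\|_{L(s)}$. Once this linear-in-$U$ bound is in hand, the gap $e^{-\tau(1-s')}/e^{-\tau(1-s)}=e^{-\tau(s-s')}$ built into the weight of the scale $X_s$ will manufacture the required $1/(s-s')$ factor after a good choice of $\tau$.

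To control $\|f(x,u_0,u_1)\|_{L(s')}$, I substitute the series $f(x,y_0,y_1)=\sum_{p+q>0}A_{p,q}(x)y_0^p y_1^q$ and invoke the algebra property of Lemma~\ref{algebre} together with the scalar identity $\|cu\|_L=|c|\|u\|_L$, obtaining
\begin{equation*}
\|f(x,u_0,u_1)\|_{L(s')}\le\sum_{p+q>0}|A_{p,q}(x)|\,\|u_0\|_{L(s')}^{p}\,\|u_1\|_{L(s')}^{q}.
\end{equation*}
Since $|x|\le 1<b_2$, one has $|A_{p,q}(x)|\le \widetilde M/(b_0^p b_1^q)$ with $\widetilde M=M/(1-1/b_2)$; and since $L(s)\in[L_1,e^2L_1]$ stays in a compact subset of $(0,\infty)$, \eqref{Gacroissance+} with $a=1/2$ followed by the monotonicity \eqref{GacroissanceL} gives $\|u_0\|_{L(s')}\le K_0\,\|u_0\|_{L(s),1/2}$ for a constant $K_0=K_0(L_1)$, and likewise $\|u_1\|_{L(s')}\le \|u_1\|_{L(s)}$. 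Setting $\alpha=K_0\|u_0\|_{L(s),1/2}/b_0$ and $\beta=\|u_1\|_{L(s)}/b_1$, the double geometric series sums under the smallness $\alpha,\beta\le 1/2$ to
\begin{equation*}
\|f(x,u_0,u_1)\|_{L(s')}\le \widetilde M\Bigl[\tfrac{1}{(1-\alpha)(1-\beta)}-1\Bigr]\le 4\widetilde M(\alpha+\beta)\le C_0\bigl(\|u_0\|_{L(s),1/2}+\|u_1\|_{L(s)}\bigr).
\end{equation*}

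Combining with the weight yields $\|F(x,U)\|_{X_{s'}}\le C_0\,e^{-\tau(s-s')}\|U\|_{X_s}$, and the elementary inequality $xe^{-\tau x}\le 1/(e\tau)$ on $[0,\infty)$ converts this into
\begin{equation*}
\|F(x,U)\|_{X_{s'}}\le \frac{C_0}{e\tau}\cdot\frac{1}{s-s'}\|U\|_{X_s},
\end{equation*}
so choosing $\tau_0\ge C_0/(e\varepsilon)$ delivers \eqref{R1}. The smallness hypothesis $\alpha,\beta\le 1/2$ is secured by noting $\|u_0\|_{L(s),1/2}+\|u_1\|_{L(s)}=e^{\tau(1-s)}\|U\|_{X_s}\le e^{\tau}D$, so it suffices to take $D\le \tfrac12\min(b_0/K_0,b_1)\,e^{-\tau}$; the same choice ensures $\|u_j\|_{L^\infty(K)}\le 2^{-6}\|u_j\|_{L(s),\cdot}<4$, so that $f(x,u_0(t),u_1(t))$ is even well-defined through the absolutely convergent series.

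The Lipschitz estimate \eqref{R2} follows the same scheme after writing $u_0^pu_1^q-v_0^pv_1^q=u_0^p(u_1^q-v_1^q)+(u_0^p-v_0^p)v_1^q$ and telescoping each difference (equivalently, $f(U)-f(V)=(U-V)\cdot\int_0^1\nabla_y f(V+t(U-V))\,dt$ applied to $\partial_{y_0}f$ and $\partial_{y_1}f$, which still satisfy \eqref{AB3} after a harmless shrinkage of $b_0,b_1$). Continuity of $x\mapsto F(x,U)\in X_s$ is immediate from the continuity (in fact analyticity on $(-b_2,b_2)$) of each $A_{p,q}$ and the uniform-in-$x$ dominated bound on the series established above. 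The conceptual obstacle, in my view, is the fact that the zeroth-order term $F$ has no derivative loss of its own; the factor $1/(s-s')$ required by \eqref{hypoGloc1}--\eqref{hypoGloc2} must therefore be \emph{harvested} from the exponential gap $e^{-\tau(s-s')}$ encoded in the norm of $X_s$, which is precisely what the weight $e^{-\tau(1-s)}$ is there for. Once this point is recognized, the rest is a careful bookkeeping of scale constants and smallness thresholds.
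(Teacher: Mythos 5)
Your proposal is correct and follows essentially the same route as the paper: the algebra property of Lemma~\ref{algebre} applied to the power series of $f$, a geometric-series summation under a smallness condition of the form $D\lesssim e^{-\tau}\min(b_0,b_1)$, and harvesting the factor $1/(s-s')$ from the gap $e^{-\tau(s-s')}$ via $te^{-t}\le e^{-1}$ with $\tau$ large. The only (immaterial) organizational difference is that you prove \eqref{R1} directly and treat \eqref{R2} as analogous, whereas the paper proves the Lipschitz estimate \eqref{R2} in detail by the same telescoping you indicate and deduces \eqref{R1} from $F(x,0)=0$.
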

\bnp
Since \eqref{R1} follows from \eqref{R2}, for $F(x,0)=0$, it is sufficient to prove \eqref{R2}. Pick 
$0\le s'<s\le 1$, $D>0$ and $U,V\in X_s$ satisfying \eqref{R3}. Then 
\begin{eqnarray*}
\Vert F(x,U)-F(x,V) \Vert _{X_{s'}}
&=& 
\Vert 
-\left( 
\begin{array}{c}
0\\
f(x,U)-f(x,V)
\end{array}
\right) 
\Vert _{X_{s'}}\\
&= &  e^{-\tau (1-s') } \Vert f(x,u_0,u_1)-f(x,v_0,v_1)\Vert _{L(s')} \\
&\le& e^{-\tau (1-s')} \sum_{p+q>0} \Vert  A_{p,q}(x)  [u_0^p u_1^q -v_0^pv_1^q] \Vert  _{L(s')}\\
&\le& e^{-\tau (1-s')} \sum_{p+q>0} \vert A_{p,q}(x)\vert \big(   \Vert u_0^p-v_0^p\Vert _{L(s')} \Vert u_1 ^q\Vert _{L(s')}  \\
&& \qquad\qquad +\Vert v_0 ^p\Vert _{L(s')}\Vert u_1^q-v_1^q\Vert _{L(s')} \big)
\end{eqnarray*} 
where we used the triangle inequality and Lemma \ref{algebre}. 
Note that, by \eqref{Gacroissance+}, we have for a constant $C=C(L_1)\ge 1$ and any $0\le s'<1$
\be
\label{TTT1}
\nor{u_0}{L(s')}+\nor{u_1}{L(s')}\leq C \nor{u_0}{L(s'),1/2}+\nor{u_1}{L(s')}
\leq C e^{  \tau(1-s')}\nor{U}{X_{s'}}\leq CD e^{\tau}, 
\ee
and similarly 
\bna
\nor{v_0}{L(s')}+\nor{v_1}{L(s')}\leq CD e^{\tau}. 
\ena
Since, by Lemma \ref{algebre}, 
\begin{eqnarray*}
\Vert u_0^p - v_0^p \Vert _{L(s')} 
&=& \Vert (u_0-v_0)(u_0^{p-1} +u_0^{p-2}v_0+\cdots + v_0^{p-1} ) \Vert _{L(s')}  \\
&\le& \Vert u_0-v_0\Vert _{L(s')} 
\left( 
\Vert u_0\Vert _{L(s')}^{p-1} + \Vert u_0\Vert _{L(s')} ^{p-2} \Vert v_0\Vert _{L(s')}  + \cdots + 
\Vert v_0\Vert _{L(s')} ^{p-1} 
\right) \\
&\le& p(CDe^{\tau})^{p-1} \Vert u_0-v_0\Vert _{L(s')},
\end{eqnarray*}
we infer that 
\begin{eqnarray}
\Vert F(x,U) - F(x,V) \Vert _{X_{s'}}
&\le& e^{  - \tau (1-s')} \left( \sum_{p+q>0} \vert A_{p,q}(x)\vert p(CDe^\tau )^{p+q-1} \Vert u_0-v_0\Vert _{ L(s') } \right.   \nonumber \\
&&\qquad \qquad \left.  +\sum_{p+q>0} \vert A_{p,q} (x)\vert q(CD e^\tau )^{p+q-1} \Vert u_1-v_1\Vert _{ L(s') }   \right) \nonumber  \\
&=:& e^{-\tau (1-s') }  (S_1+S_2).
\label{ABC4}
\end{eqnarray}
Let us estimate $S_1$. Set $M':=M/(1-b_2^{-1})$. Since
\[
\vert A_{p,q} (x) \vert \le \frac{M}{b_0^pb_1^q} (1-\frac{|x|}{b_2})^{-1} \le \frac{M'}{b_0^pb_1^q} 
\quad \textrm{ for } |x |\le 1, 
\]
we have that 
\begin{eqnarray*}
S_1 &\le& \sum_{p>0} \frac{M'}{b_0} p \left( \frac{CDe^\tau}{b_0} \right) ^{p-1} \sum_{q\ge 0}   \left( \frac{CDe^\tau}{b_1} \right) ^{q}
\Vert u_0-v_0\Vert _{L(s')} \\
&=& \frac{M'}{b_0} \sum_{p\ge 0} (p+1) \left( \frac{CDe^\tau }{b_0}\right) ^p (1-\frac{CDe^\tau}{b_1})^{-1} \Vert u_0-v_0\Vert _{L(s')} \\ 
&=& \frac{M'}{b_0} (1-\frac{CDe^\tau}{b_0})^{-2} (1-\frac{CDe^\tau}{b_1})^{-1} \Vert u_0-v_0\Vert _{L(s')}\\
&\le& \frac{8M'}{b_0} \Vert u_0-v_0\Vert _{L(s')}  \\
&\le& 2 M' \Vert u_0-v_0\Vert _{L(s')}  
\end{eqnarray*} 
provided that 
\be
\label{AB20}
D\le \min ( \frac{ b_0 e^{-\tau} }{2C}, \frac{b_1 e^{-\tau} }{2C}) \cdot
\ee
Similarly, we can prove that 
\[
S_2\le 2M'\Vert u_1-v_1\Vert _{L(s')}. 
\]
Therefore, using \eqref{gainXs}, \eqref{estimexp} and \eqref{TTT1}, we infer that 
\begin{eqnarray*}
\Vert F(x,U)-F(x,V)\Vert _{X_{s'}} 
&\le& 2M' e^{-\tau (1-s')} \left( \Vert u_0-v_0\Vert _{L(s')} +\Vert u_1-v_1\Vert _{L(s')}  \right) \\
&\le& 2M' e^{-\tau (1-s')} \left( C \Vert u_0-v_0\Vert _{L(s'), \frac{1}{2}} +\Vert u_1-v_1\Vert _{L(s')}  \right) \\
&\le& 2CM'  \Vert U-V\Vert _{X_{s'}} \\
&\le& 2CM' e^{-\tau (s-s')} \Vert U-V\Vert _{X_s} \\
&\le& \frac{2CM'}{e} \, \frac{1}{\tau (s-s')} \Vert U-V\Vert _{X_s}   \cdot
\end{eqnarray*}  
To complete the proof of \eqref{R2}, it is sufficient to pick $\tau \ge \tau _0$ with $\tau _0$ such that  $2CM'/(e \tau _0) \le \epsilon $,
 and $D$ as in \eqref{AB20}.
  
For given $0\le s\le 1$ and $U=(u_0,u_1)\in X_s$ with 
$\Vert U\Vert _{X_s} \le D$, let us prove that the map 
$x\in [-1,1]\to F(x,U)\in X_s$ is continuous.   Pick any $x,x'\in [-1,1]$. 
From the mean value theorem, we have for $r\in \N$ that 
$|x^r-x'^r|\le r|x-x'|$ for $r\in \N$, and hence
\[
| A_{p,q}(x)-A_{p,q}(x') |\le |x-x'|\sum_{r\in \N} \frac{rM}{b_0^pb_1^qb_2^r}
=\frac{M}{b_0^pb_1^q}(1-\frac{1}{b_2})^{-2} |x-x'|.
\]
We infer that 
\begin{eqnarray*}
\Vert F(x,U)-F(x',U)\Vert _{X_{s}} 
&=& e^{-\tau (1-s)} \Vert f(x,u_0,u_1)-f(x',u_0,u_1)\Vert _{L(s)} \\
&\le& e^{-\tau (1-s)} \sum_{p+q>0} |A_{p,q}(x)-A_{p,q}(x')| 
\Vert u_0^pu_1^q\Vert _{L(s)} \\
&\le& e^{-\tau (1-s)} 
M(1-\frac{1}{b_2})^{-2} |x-x'|
\sum_{p+q>0} \left( \frac{ \Vert u_0\Vert _{L(s)}}{b_0}\right) ^p 
\left( \frac{\Vert u_1\Vert _{L(s)}}{ b_1}\right) ^q, 
\end{eqnarray*}
the last series being convergent for $\Vert U\Vert _{X_s}\le D$. 
\enp

We are in a position to prove Theorem \ref{thm2}. \\
{\em Proof of Theorem \ref{thm2}}. Let $f=f(x,y_0,y_1)$ be as in \eqref{AB1}-\eqref{AB4}, 
$ - \infty < t_1 < t_2< + \infty$ and $R>4$. Pick $g_0,g_1\in G^2( [t_1,t_2] )$ such that 
\eqref{AA} holds. We will show that Theorem \ref{thm4} can be applied provided that $C$ is small enough.   
Pick $L_1 \in (1/R, 1/4)$. Let $\eta =\eta (L_1)>0$ be as in Theorem \ref{thm4}. Let
$U_0=(u_0,u_1)=(g_0,g_1)$. We have to show that 
\[
\Vert U_0\Vert _{{\mathcal X}_{L_1}}= \Vert g_0\Vert _{L_1,\frac{1}{2}} 
+\Vert g_1\Vert _{L_1} \le \eta
\]
for $C$ small enough.    It is sufficient to have 
\ba
 \Vert g_0\Vert _{L_1,\frac{1}{2}}          &\le& \frac{\eta}{2}, \label{YY1} \\  
\Vert g_1\Vert _{L_1}    &\le& \frac{\eta}{2}.\label{YY2}
\ea
Recall that 
\ba
\Vert g_0\Vert _{L_1\frac{1}{2}} &=& \max \left( 2^6 \Vert g_0\Vert _{L^\infty ( [t_1,t_2] )}, 2^3L_1^{-1} 
\sup_{t\in [t_1,t_2],k\in \N} \frac{\vert g_0^{(k+1)} (t)\vert }{ L_1^{ |k-\frac{1}{2}| } \Gamma _{2,\frac{1}{2}} (k)} \right) , \label{YY3}\\
\Vert g_1\Vert _{L_1} &=& \max \left( 2^6 \Vert g_1\Vert _{L^\infty ( [t_1,t_2] )}, 2^3L_1^{-1} 
\sup_{t\in [t_1,t_2],k\in \N} \frac{\vert g_1^{(k+1)} (t)\vert }{ L_1^k 2^{-5} (k!)^2 (1+k)^{-2}} \right) ,
\label{YY4}
\ea
where 
\[
\Gamma _{2, \frac{1}{2}} (k) = 
\left\{ 
\begin{array}{ll}
2^{-5} \big( \Gamma (k+\frac{1}{2}) \big) ^2 (1+k)^{-2}, &\textrm{ if } k > 3/2, \\
2^{-5} (k!)^2 (1+k)^{-2} &\textrm{ if } 0\le k\le 3/2.
\end{array}
\right. 
\]


Then, if follows that \eqref{YY1} is satisfied provided that 
\begin{eqnarray}
 \Vert g_0\Vert _{L^\infty ( [t_1,t_2] )} &\le& 2^{-7}\eta, \label{YY5}\\
 \Vert g_0 ^{(k+1)} \Vert _{L^\infty ( [t_1,t_2] )} &\le& 2^{-4}\eta   L_1^{1+|k-\frac{1}{2} |} 
\Gamma _{2, \frac{1}{2}} (k) 
  \quad \forall k\in \N.  \label{YY6}
\end{eqnarray}
Since $\Gamma (k+\frac{1}{2})\sim \Gamma (k) k^\frac{1}{2} \sim (k-1)! k^\frac{1}{2}$ as $k\to +\infty$, we have that  $\big( \Gamma ( k+\frac{1}{2} ) \big) ^2\sim (k!)^2/k$. Thus, the r.h.s. of \eqref{YY6}
is equivalent to $2^{-9} \eta L_1 ^{k+\frac{1}{2}}  (k!)^2 k^{-3}$  as $k\to +\infty$. Using 
\eqref{AA} and the fact that $L_1>1/R$, we have that \eqref{YY6} holds if $C$ is small enough. 
The same is true for \eqref{YY5}. 
Similarly, we see that \eqref{YY2} is satisfied provided that 
\begin{eqnarray}
 \Vert g_1\Vert _{L^\infty ( [t_1,t_2] )} &\le& 2^{-7}\eta, \label{YY7}\\
 \Vert g_1 ^{(k+1)} \Vert _{L^\infty ( [t_1,t_2] )} &\le& 2^{-9}\eta   L_1^{k+1} 
(k!)^2 (k+1)^{-2} 
  \quad \forall k\in \N.  \label{YY8}
\end{eqnarray}
Again, \eqref{YY7} and \eqref{YY8} are satisfied if the constant $C$ in \eqref{AA} is small enough.  

We infer from Theorem \ref{thm4} the existence of a solution $U=(y,\partial _x y) \in C([-1,1], X_{s_0})$ 
 for some $s_0\in (0,1)$ of \eqref{C1}-\eqref{C3}. Let us check that $y\in C^\infty ([-1,1]\times [t_1,t_2])$.
To this end, we prove by induction on $n\in \N$ the following statement 
 \be
 \label{PPP1}
 U\in C^n([-1,1], C^k([t_1,t_2])^2)  \quad \forall k\in \N . 
 \ee
The assertion \eqref{PPP1} is true for $n=0$, for $X_{s_0}\subset C^k([t_1,t_2])^2$ for all $k\in \N$. 
 Assume \eqref{PPP1} true for some $n\in \N$. 
Since $A$ is a continuous linear map from $X_s$ to $X_{s'}$ for $0\le s' < s \le 1$, we have that 
\[
AU\in C^n([-1,1],X_s) \subset C^n([-1,1],C^k([t_1,t_2])^2) \quad \forall s\in (0,s_0),  \ \forall k\in \N .
\]
On the other hand, as $f$ is analytic and hence of class $C^\infty$, we infer from \eqref{PPP1} that 
$F(x,U)\in C^n([-1,1],$ $C^k([t_1,t_2])^2 )$ for all $k\in \N$.   Since $\partial _x U=AU+F(x,U)$, we obtain that 
\eqref{PPP1} is true with $n$ replaced by $n+1$. The proof of  
$y\in C^\infty ([-1,1]\times [t_1,t_2])$ is complete.
Finally, the proof of  $y\in  G^{1,2}([-1,1]\times [t_1,t_2])$, which uses some estimates of the next section,  
is given in appendix, with eventually a stronger smallness assumption on the initial data. \qed

\section{Correspondence between the space derivatives and the time derivatives}
\label{section3}

We are concerned with the relationship between the time derivatives and the space derivatives of any solution  of a general nonlinear heat equation 
\be
\label{A1}
\partial _t y = \partial _x ^2 y + f(x,y,\partial _x y),
\ee
where $f=f(x,y_0,y_1)$ is of class $C^\infty$ on $\R ^3$. 

When $f=0$, then the jet $(\partial _x^n y (0,0))_{n\ge 0}$ is nothing but the reunion of the jets  
$(\partial _t^n y (0,0))_{n\ge 0}$ and 
$(\partial _t^n \partial _x y (0,0))_{n\ge 0}$, for
\ba
\partial _t^n y=\partial _x^{2n} y,&& \quad \forall n\in \N ,\label{J1}\\
\partial _t ^n \partial _x y = \partial _x^{2n+1}y,&& \quad \forall n\in \N. \label{J2}  
\ea
When $f$ is no longer assumed to be $0$, then the relations \eqref{J1}-\eqref{J2} do not hold anymore. Nevertheless, there is still a 
one-to-one correspondence between the jet  $(\partial _x^n y (0,0) )_{n\ge 0}$  and the 
jets $(\partial _t^n y (0,0))_{n\ge 0}$ and 
$(\partial _t^n \partial _x y (0,0) )_{n\ge 0}$.

\begin{proposition}
\label{prop1}
Let $-\infty < t_1 \le \tau \le t_2 < +\infty$. 
Assume that $f\in C^\infty (\R ^3)$ and that  $y\in C^\infty ([-1,1]\times [t_1,t_2])$ satisfies \eqref{A1} on $[-1,1]\times [t_1,t_2]$. 
Then the determination of the jet $(\partial _x ^n y(0,\tau))_{n\ge 0}$ is equivalent to the determination of the jets 
$(\partial _t ^n y(0,\tau ))_{n\ge 0}$ and $(\partial _t^n\partial _x y(0,\tau ))_{n \ge 0}$.  
\end{proposition}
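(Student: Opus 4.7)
The proof proceeds by establishing the two implications separately, both by induction exploiting the parabolic scaling in which $\partial_t$ has weight $2$ and $\partial_x$ has weight $1$, so that \eqref{A1} relates $\partial_t y$ and $\partial_x^2 y$, both of weight $2$, modulo a lower-weight contribution coming from $f(x,y,\partial_x y)$.

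For the implication ``space jet $\Rightarrow$ time jets'', I would prove by induction on $n\in\N$ that there exists a polynomial $P_n$ in $2n+1$ arguments, whose coefficients are partial derivatives of $f$ evaluated at $(x,y,\partial_x y)$, such that
\be
\partial_t^n y = P_n(x, y, \partial_x y, \ldots, \partial_x^{2n} y).
\ee
The case $n=0$ is trivial; for the induction step, apply $\partial_t$ to this identity and use the chain rule together with $\partial_t y = \partial_x^2 y + f$ to rewrite every time derivative of $y$ as a polynomial in space derivatives of order $\le 2(n+1)$. Evaluating at $(0,\tau)$ thus determines $\partial_t^n y(0,\tau)$, and applying one further $\partial_x$ determines $\partial_t^n \partial_x y(0,\tau)$, from the space jet.

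For the converse direction, I would assign to each mixed partial $\partial_t^a \partial_x^b y$ the \emph{parabolic weight} $2a+b$ and prove by strong induction on $N\in\N$ that every value $\partial_t^a \partial_x^b y(0,\tau)$ with $2a+b\le N$ is determined by the two given time jets. The cases $N\in\{0,1\}$ are immediate, since only $b\in\{0,1\}$ can occur and the corresponding values are elements of the data. For the inductive step, given $\partial_t^a \partial_x^b y(0,\tau)$ of weight $N$ with $b\ge 2$, apply $\partial_t^a \partial_x^{b-2}$ to \eqref{A1} to obtain
\be
\partial_t^a \partial_x^b y \;=\; \partial_t^{a+1}\partial_x^{b-2} y \;-\; \partial_t^a \partial_x^{b-2}\big[ f(x,y,\partial_x y)\big] .
\ee
Iterating the reduction $(a,b)\mapsto (a+1,b-2)$ eventually lowers $b$ to $0$ or $1$, landing in one of the two time jets, while each correction $\partial_t^{a+j}\partial_x^{b-2j-2}[f(x,y,\partial_x y)]$, expanded by Fa\`a di Bruno, becomes a polynomial in mixed partials of $y$ of parabolic weight $\le 2a+b-1<N$, determined by the induction hypothesis. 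Taking $a=0$, $b=n$, $N=n$ recovers $\partial_x^n y(0,\tau)$.

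The main technical obstacle is the weight bookkeeping of the correction: when $\partial_t^{a'}\partial_x^{m}[f(x,y,\partial_x y)]$ is expanded by the chain rule, each $\partial_x$ applied to the argument $\partial_x y$ raises the order of a $y$-derivative by one, and one has to verify that no monomial produced exceeds weight $2a'+m+1$, which is strictly less than the weight of the term being eliminated. This is a direct check using the smoothness of $f$ together with the commutation of partial derivatives on $y\in C^\infty$, and once it is recorded the double recursion closes.
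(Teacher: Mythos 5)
Your proof is correct. The forward direction is essentially the paper's Lemma~\ref{lem1}: the paper proves by induction that $\partial_t^n y=\partial_x^{2n}y+H_n(x,y,\partial_x y,\dots,\partial_x^{2n-1}y)$ and $\partial_t^n\partial_x y=\partial_x^{2n+1}y+\tilde H_n(x,y,\dots,\partial_x^{2n}y)$, which is your polynomial identity with the top-order term isolated. Where you genuinely diverge is the converse. The paper gets it for free from the same lemma: because the leading terms $\partial_x^{2n}y$ and $\partial_x^{2n+1}y$ appear with coefficient one and $H_n$, $\tilde H_n$ involve only \emph{strictly lower-order} space derivatives, the system is triangular and one solves for $\partial_x^{2n}y(0,\tau)$ and $\partial_x^{2n+1}y(0,\tau)$ by induction on $n$, alternating between the two identities. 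You instead run a strong induction on the parabolic weight $2a+b$ over the full two-parameter family of mixed partials, using the PDE to trade $\partial_x^2$ for $\partial_t$ and checking that the Fa\`a di Bruno expansion of $\partial_t^{a}\partial_x^{b-2}[f(x,y,\partial_x y)]$ only produces terms of weight at most $2a+b-1$; your bookkeeping there is sound (the worst factor is $\partial_t^{a}\partial_x^{b-1}y$, of weight $N-1$). The trade-off: the paper's single lemma serves both directions at once and never needs to consider mixed partials $\partial_t^a\partial_x^b y$ with $a\ge 1$ and $b\ge 2$, so it is more economical; your weight induction costs more bookkeeping but makes the ``strictly lower order'' mechanism explicit and quantitative, which is in the spirit of what the paper must do anyway in Proposition~\ref{prop10} when it turns this qualitative correspondence into the estimates \eqref{D2}.
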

\begin{proof}
The proof of Proposition \ref{prop1} is a direct consequence of the following 
\begin{lemma}
\label{lem1}
Let $f\in C^\infty (\R ^3)$ and $n\in \N ^*$. Then there exist two smooth functions $H_n=H_n(x,y_0,y_1,..., y_{2n-1})$ and 
$\tilde  H_n =\tilde H _n (x,y_0,y_1, ... , y_{2n})$ such that any solution $y\in C^\infty ([-1,1]\times [t_1,t_2])$ of \eqref{A1} satisfies 
\ba
\partial _t ^n y &=& \partial _x^{2n} y + H_n(x,y, \partial _x y, ... , \partial _x ^{2n-1}y) \quad \textrm{ for } 
(x,t)\in [-1,1]\times [t_1,t_2], \label{B1}  \\
\partial _t^n\partial _x y &=& \partial _x ^{2n+1} y + \tilde H _n (x,y, \partial _x y, ... ,  \partial _ x ^{2n} y)  \quad \textrm{ for } 
(x,t)\in [-1,1]\times [t_1,t_2]. \label{B2} 
\ea
\end{lemma}
\noindent
{\em Proof of Lemma \ref{lem1}.}  Assume first that $n=1$. Then \eqref{B1} holds with 
$H_1(x,y_0,y_1) =  f(x,y_0,y_1)$. Taking the derivative with respect to $x$ in \eqref{A1}
yields 
\[
\partial _x\partial _t  y = \partial _x ^3 y  +\frac{\partial f}{\partial x} (x,y, \partial _x y) 
+ \frac{\partial f}{ \partial y_0} (x,y, \partial _x y) \partial _x y 
+  \frac{\partial f}{ \partial y_1} (x,y, \partial _x y) \partial _x^2 y,  
\] 
and hence \eqref{B2} holds with $\tilde H_1 (x,y_0,y_1,y_2)= 
\frac{\partial f }{\partial x}(x,y_0,y_1) 
+ \frac{\partial f}{\partial y_0} (x,y_0,y_1) y_1 
+ \frac{\partial f}{\partial y_1} (x,y_0,y_1)  y_2$. 

Assume now that \eqref{B1} and \eqref{B2} are satisfied at rank $n-1$, and let us prove that they are satisfied at rank $n$. 
For \eqref{B1}, we notice that
\begin{eqnarray}
\partial _t ^n y &=& \partial _t (\partial _t ^{n-1} y ) \nonumber \\
&=& \partial _t (\partial _x ^{2n-2} y + H_{n-1} (x,y, \partial _x y , ... , \partial _x ^{2n-3}y)) \nonumber \\
&=& \partial _x ^{2n-2} \partial _t y + \sum_{k=0} ^{2n-3} \frac{ \partial H_{n-1}}{\partial y_k} (x,y, \partial _x y , ... , \partial _x ^{2n-3}y)
\partial _t \partial _x ^k y \nonumber \\
&=& \partial _x ^{2n-2} (\partial _x ^2 y + f (x,y, \partial _x y) ) + \sum_{k=0} ^{2n-3} \frac{ \partial H_{n-1}}{\partial y_k} (x,y, \partial _x y , ... , \partial _x ^{2n-3}y)  \partial _x ^k  (\partial _x ^2 y + f(x,y, \partial _x y))  \label{BBB1} \\
&=:& \partial _x ^{2n} y + H_n (x,y, \partial _x y , ... , \partial _x ^{2n-1} y) \nonumber 
\end{eqnarray}
for some smooth function $H_n=H_n(x,y_0 , ..., y_{2n-1})$. For \eqref{B2}, we notice that 
\begin{eqnarray*}
\partial _t ^n \partial _x y &=& \partial _t (\partial _t ^{n-1} \partial _x y ) \\
&=& \partial _t (\partial _x ^{2n-1} y + \tilde H_{n-1} (x,y, \partial _x y , ... , \partial _x ^{2n-2}y)) \\
&=& \partial _x ^{2n-1} \partial _t y + \sum_{k=0} ^{2n-2} \frac{ \partial \tilde H_{n-1}}{\partial y_k} (x,y, \partial _x y , ... , \partial _x ^{2n-2}y)
\partial _t \partial _x^k y\\
&=& \partial _x ^{2n-1} (\partial _x ^2 y + f (x,y, \partial _x y) ) + \sum_{k=0} ^{2n-2} \frac{ \partial \tilde H_{n-1}}{\partial y_k} (x,y, \partial _x y , ... , \partial _x ^{2n-2}y)  \partial _x ^k  (\partial _x ^2 y + f(x,y, \partial _x y)) \\
&=:& \partial _x ^{2n+1} y + \tilde H_n (x,y, \partial _x y , ... , \partial _x ^{2n} y)
\end{eqnarray*}
for some smooth function $\tilde H_n=\tilde H_n (x,y_0, y_1, ... , y_{2n})$.  
\end{proof}

Next, we relate the behaviour as $n\to +\infty$ of the jets
$(\partial _t ^n y(0,\tau ))_{n\ge 0}$ and $(\partial _t^n\partial _x y(0,\tau ))_{n \ge 0}$  to those 
of the jet $(\partial _x ^n y(0,\tau ))_{n\ge 0}$. To do that, we assume that in \eqref{A1} the nonlinear term reads 
\be
\label{AAA}
f(x,y_0,y_1)= \sum_{ (p,q,r)\in \N^3} a_{p,q,r} (y_0)^p (y_1)^q x^r  \quad \forall (x,y_0,y_1)\in (-4,4)^3, 
\ee
where the coefficients $a_{p,q,r}$, $(p,q,r)\in \N ^3$, satisfy \eqref{AB3}-\eqref{AB4}. 
\begin{proposition}
\label{prop10}
Let $-\infty < t_1 \le \tau \le t_2 < +\infty$ and $f=f(x,y_0,y_1)$ 
be as in  \eqref{AB1}-\eqref{AB2} with the 
coefficients  $a_{p,q,r}$, $(p,q,r)\in \N ^3$, satisfying \eqref{AB3}-\eqref{AB4}.
Pick any $\widetilde R>4$ and any numbers $R,R'$ with $4<R'<R<\min (\widetilde R,b_2)$. Then there exists some number $\widetilde C>0$ such that for any 
$C\in (0,\widetilde C ]$, ,
we can find a number $C' =C'(C,R,R')>0$ with $\lim _{C\to 0^+} C'(C,R,R') =0$ such that for any function 
 $y\in C^\infty([-1,1]\times [t_1,t_2] )$ satisfying
 \eqref{A1} on $[-1,1]\times [t_0,t_1]$ and 
\be
y(x,\tau )=y^0(x)=\sum_{n=0}^\infty a_n \frac{x^n}{n!} , \quad \forall x\in [-1,1] \label{D1}\\
\ee
for some $y^0\in {\mathcal R}_{\tilde R,C}$,
is such that
\be
\vert \partial _x^k\partial _t ^n y(0,\tau)\vert \le C' \frac{(2n+k)!}{R^k R'^{2n}}, \quad \forall k,n\in \N .\label{D2} 
\ee
In particular, we have 
\ba
\vert \partial _t ^n y(0,\tau)\vert &\le& C' \frac{(2n)!}{R'^{2n}}, \quad \forall n\in \N ,\label{D21}\\ 
\vert \partial _x\partial _t ^n y(0,\tau)\vert &\le& C' \frac{(2n+1)!}{R R'^{2n}}, \quad \forall n\in \N .\label{D22} 
\ea
\end{proposition}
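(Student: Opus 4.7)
The approach relies on Lemma \ref{lem1}, which expresses the mixed derivatives of $y$ at $(0,\tau)$ in terms of the Taylor coefficients $(a_k)_{k\ge 0}$ of the initial datum and the Taylor coefficients $(a_{p,q,r})$ of $f$, combined with an analytic extension of $y^0$ to the complex disk and a careful majorant estimate of the nonlinear correction.

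Since $y^0\in\mathcal R_{\widetilde R,C}$, the series $\sum_n a_n z^n/n!$ converges on the complex disk $\{|z|<\widetilde R\}$ and defines a holomorphic extension of $y^0$ with $|y^0(z)|\le C/(1-|z|/\widetilde R)$ and, by Cauchy's estimates, $|\partial_x^j y^0(z)|$ of size $O(C)$ on any compact subdisk (provided $C\le\widetilde C$ small enough). By Lemma \ref{lem1}, for $n\ge 1$,
\[
\partial_x^k\partial_t^n y(0,\tau) \;=\; a_{2n+k} \;+\; \partial_x^k\!\left[H_n\bigl(x,y^0(x),\partial_x y^0(x),\ldots,\partial_x^{2n-1}y^0(x)\bigr)\right]_{x=0}.
\]
The linear contribution is controlled directly: $|a_{2n+k}|\le C(2n+k)!/\widetilde R^{2n+k}\le C(2n+k)!/(R^kR'^{2n})$, since $R<\widetilde R$ and $R'<\widetilde R$ imply $\widetilde R^{2n+k}\ge R^kR'^{2n}$. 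This is already consistent with \eqref{D2}.

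The technical core is to bound the nonlinear contribution $\partial_x^k\Psi_n(0)$, where $\Psi_n(x):=H_n(x,y^0(x),\ldots,\partial_x^{2n-1}y^0(x))$. From the inductive construction of $H_n$ in the proof of Lemma \ref{lem1}, $\Psi_n$ expands, via the chain rule, into a finite sum of monomials of the form $c\cdot\prod_i\partial_x^{j_i}y^0(x)$ where the coefficient $c$ is itself a polynomial expression in the partial derivatives of $f$ evaluated along the jet, and where a scaling argument shows that the total order of differentiation per monomial equals $2n$. Taylor-expanding about $z=0$ and inserting the bounds $|a_j|\le Cj!/\widetilde R^j$ and $|a_{p,q,r}|\le M/(b_0^pb_1^qb_2^r)$ from \eqref{AB3}-\eqref{AB4}, one observes that every monomial in $\partial_x^k\Psi_n(0)$ contains at least one factor coming from the jet of $y^0$ (hence a factor of size $O(C)$), while the series over $(p,q,r)$ converge geometrically thanks to $b_0,b_1,b_2>4$. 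A careful majorant argument, organized by the recursion of Lemma \ref{lem1} and using inverse Cauchy bounds on a decreasing sequence of complex subdisks of $\{|z|<\widetilde R\}$, then produces an estimate of the form $|\partial_x^k\Psi_n(0)|\le C'(C)\,(2n+k)!/(R^kR'^{2n})$ with $C'(C)\to 0$ as $C\to 0^+$. Summing with the linear contribution yields \eqref{D2}, and estimates \eqref{D21}-\eqref{D22} are the particular cases $k=0,1$.

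\emph{Main obstacle.} The chief difficulty is the inductive/combinatorial control of $H_n$: its degree in the jet variables grows linearly in $n$, and a naive expansion has exponentially many monomials, so the $(2n+k)!/(R^kR'^{2n})$ rate must not be degraded by combinatorial overcounting. The three ingredients that make the bookkeeping close are (i) the geometric decay of the coefficients $a_{p,q,r}$ from \eqref{AB3}-\eqref{AB4}, which ensures convergence of the series indexed by $(p,q,r)$; (ii) the smallness of $C$, which guarantees that each jet factor contributes $O(C)$ and drives $C'(C)\to 0$; and (iii) the strict inequality $R'<R<\widetilde R$, which leaves the room needed to absorb the nonlinear correction into an $R'^{-2n}$-scaled majorant without losing the sharp $(2n+k)!$ factorial growth.
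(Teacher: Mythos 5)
There is a genuine gap: the core estimate of your argument is asserted rather than proved. You reduce \eqref{D2} to bounding $\partial_x^k\Psi_n(0)$ where $\Psi_n(x)=H_n(x,y^0(x),\dots,\partial_x^{2n-1}y^0(x))$, and then state that ``a careful majorant argument, organized by the recursion of Lemma \ref{lem1} and using inverse Cauchy bounds on a decreasing sequence of complex subdisks, then produces'' the bound $C'(C)\,(2n+k)!/(R^kR'^{2n})$. That is precisely the hard step, and you have correctly identified it as the main obstacle without resolving it. The functions $H_n$ are built by the recursion $H_n=\partial_x^{2n-2}f+\sum_{k\le 2n-3}\partial_{y_k}H_{n-1}\cdot\partial_x^k(\partial_x^2y+f)$, so controlling $\partial_x^k\Psi_n(0)$ requires quantitative bounds on all jet-derivatives of $H_{n-1}$ propagated through $n$ levels of a Fa\`a di Bruno--type expansion; nothing in your three ``ingredients'' (geometric decay of $a_{p,q,r}$, smallness of $C$, the gap $R'<R<\widetilde R$) shows that the exponentially many monomials do not degrade the $(2n+k)!$ rate. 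The paper itself flags this: it remarks that a direct proof of \eqref{D21}--\eqref{D22} from the jet correspondence ``seems hard to be derived.''

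The paper's actual route avoids $H_n$ entirely. It proves the strengthened bound $\vert\partial_x^k\partial_t^ny(0,\tau)\vert\le C_n(2n+k)!/(R^kR'^{2n}(2n+k+1)^\mu)$ by induction on the number of \emph{time} derivatives $n$, applying the PDE once per step via $\partial_t^{n+1}y=\partial_t^n(\partial_x^2y+f(x,y,\partial_xy))$ and using the algebra property of the mixed Gevrey class (Lemma \ref{lem3}) to estimate $\partial_x^k\partial_t^n(A_{p,q}y^p(\partial_xy)^q)$ directly from the rank-$n$ bounds. The nonlinearity is then handled by summing geometric series in $(p,q)$ (using $C_nK/b_0\le\delta$, $C_nK/(b_1R)\le\delta$), and the constants obey $C_{n+1}=\lambda_nC_n$ with $\lambda_n\le1$ for $n$ large because the linear term contributes a factor $1-\varepsilon$ (from $R'^2\le(1-\varepsilon)R^2$) while the nonlinear contributions decay like $1/n$. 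This one-step-at-a-time use of the equation is what makes the bookkeeping close; if you want to salvage your approach you would need to supply the full combinatorial majorant analysis of $H_n$, which is a substantial piece of work not contained in your proposal.
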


\begin{proof}
We know from Proposition \ref{prop1} that the jets 
$(\partial _t ^n y(0, \tau ))_{n\ge 0} $ and 
$(\partial _t ^n\partial _x y(0,\tau ))_{n\ge 0}$ are completely determined by the jet 
$(\partial _x ^n y(0,\tau ))_{n\ge 0}$, that is by $y^0$. A {\em direct} proof of estimates \eqref{D21} and \eqref{D22}  (which follow at once from \eqref{D2}) seems 
hard to be derived, whereas  
a proof of \eqref{D2} can be obtained by induction on $n$. We shall need several lemmas. 
\begin{lemma}
\label{lem2}
(see \cite[Lemma A.1]{KiNi}) For all $k,q\in \N$ and $a\in \{ 0 , ... , k+q\}$, we have 
\[
\sum_{\tiny
\begin{array}{c} 
j+p=a\\
0\le j \le k\\
0\le p\le q
\end{array}}
\left( \begin{array}{c} k\\j \end{array} \right) \, 
\left( \begin{array}{c} q\\p \end{array} \right)  =
\left( \begin{array}{c} k + q \\a \end{array} \right) . 
\]
\end{lemma}
The following Lemma gives  the algebra property for the mixed Gevrey spaces $G^{1,2}([-1,1]\times [t_1,t_2])$. A slight modification of its proof actually yields Lemma \ref{algebre}, making the paper almost self-contained.
\begin{lemma}
\label{lem3}
Let $(x_0,t_0)\in [-1,1]\times [t_1,t_2]$, $R,R' \in (0,+\infty ) $, $q\in \N$, $\mu  \in (q+2,+\infty )$,  
 $k_0,n_0\in \N$, $C_1,C_2\in (0,+\infty )$, and $y_1,y_2\in C^\infty ([-1,1]\times [t_1,t_2] )$ be such that 
\be
\vert \partial _x ^k \partial _t ^n y_i (x_0,t_0)\vert \le C_i \frac{ (2n+k+q)!}{R^k R'^{2n} (2n+k+1)^\mu  } \quad \forall i=1,2, \quad  \forall k\in \{ 0, ..., k_0\} ,\ \forall n\in \{ 0, ..., n_0 \} .  
\label{A49}
\ee
Then we have 
\be
\vert \partial _x^k \partial _t ^n (y_1y_2) (x_0,t_0) \vert \le K_{q,\mu}  C_1C_2 \frac{ (2n+k+q )!}{R^k R'^{2n} (2n+k+1)^\mu } \quad  \forall k\in \{ 0, ..., k_0\} ,\ \forall n\in \{ 0, ..., n_0 \} ,
 \label{A50}
\ee
where
\[
K_{q,\mu} :=2^{\mu -q} (1+q)^{2q} \sum_{j\ge 0} \sum_{i\ge 0} \frac{1}{ (2 i +j+1)^{\mu -q} } <\infty  .
\]
\end{lemma}
\noindent 
{\em Proof of Lemma \ref{lem3}:} 
Using $(2n+k+q)^q\leq (1+q)^q\left(1+2n+k\right)^q$, we obtain 
\[
(2n+k+q)!\leq (2n+k)! (2n+k+q )^q\leq (1+q)^q(2n+k)!\left(1+2n+k\right)^q.
\]
So, denoting $\mub :=\mu-q>2$ and $\widetilde{C}_i:=(1+q)^qC_i$, we have
\be
\vert \partial _x ^k \partial _t ^n y_i (x_0,t_0)\vert \le \widetilde{C}_i \frac{ (2n+k)!}{R^k R'^{2n} (2n+k+1)^{\mub}  } \quad \forall i=1,2, \quad  \forall k\in \{ 0, ..., k_0\} ,\ \forall n\in \{ 0, ..., n_0 \} .  
\ee
From Leibniz' rule, we have that 
\begin{eqnarray*}
&& \vert \partial _x ^k \partial _t ^n (y_1y_2) (x_0,t_0) \vert \\
&&\qquad = 
\left\vert \sum _{0\le j\le k}  \,  \sum_{0\le i\le n}  \left( \begin{array}{c} k\\j \end{array} \right) \, 
\left( \begin{array}{c} n\\i \end{array} \right)  
(\partial _x^j\partial_t^i y_1) (x_0,t_0)  (\partial _x^{k-j} \partial _t ^{n-i} y_2) (x_0,t_0)  \right\vert \\
&&\qquad \le 
\sum _{0\le j\le k}  \,  \sum_{0\le i\le n}  \left( \begin{array}{c} k\\j \end{array} \right) \, 
\left( \begin{array}{c} n\\i \end{array} \right)  
\widetilde{C}_1 \frac{(2i +j)!}{R^jR'^{2i} (2i+j+1)^{\mub} }\widetilde{C}_2 \frac{(2(n-i) +k-j )!}{R^{k-j}R'^{2(n-i)}
(2(n-i)+k-j+1)^{\mub}} \\
&&\qquad = \frac{\widetilde{C}_1\widetilde{C}_2}{R^kR'^{2n}} (2n+k) !   
\underbrace{\sum _{0\le j\le k}  \,  \sum_{0\le i\le n} 
\frac{ \left( \begin{array}{c} k\\j \end{array} \right) \, \left( \begin{array}{c} n\\i \end{array} \right) \, 
 \left( \begin{array}{c} 2n+k\\ 2i+j   \end{array} \right) ^{-1} } 
 {(2i+j+1)^{\mub}(2(n-i)+k-j+1)^{\mub}}}_{I} \cdot
\end{eqnarray*}
We infer from Lemma \ref{lem2} that 
\be
\label{AB0}
\left( \begin{array}{c} k\\j \end{array} \right) \, \left( \begin{array}{c} q\\p \end{array} \right)
\le \left( \begin{array}{c} k+q \\j  + p\end{array} \right), \quad \textrm{ for  } 0\le j\le k, \ 0\le p\le q.    
\ee
This yields
\[
\left( \begin{array}{c} n\\i \end{array} \right) 
\le \left( \begin{array}{c} n\\i \end{array} \right) ^2
\le \left( \begin{array}{c} 2n\\ 2i \end{array} \right) ,
\]
and hence (using again \eqref{AB0}) 
\[
\left( \begin{array}{c} k\\j \end{array} \right) \, \left( \begin{array}{c} n\\i \end{array} \right) 
\le \left( \begin{array}{c} k\\j \end{array} \right) \, \left( \begin{array}{c} 2n\\ 2i \end{array} \right)
\le\left( \begin{array}{c} 2n+ k\\ 2i+ j \end{array} \right) \cdot 
\]
Finally, by convexity of $x\to x^{\mub}$, we have that 
 \begin{eqnarray*}
&&\sum _{0\le j\le k}  \sum_{0\le i\le n}  \frac{ (2n+k+1)^{\mub}}{(2i+j+1)^{\mub}(2(n-i)+k-j+1)^{\mub}} 
= \sum _{0\le j\le k}\sum_{0\le i\le n}  \left( \frac{1}{ 2i+j+1}  + \frac{1}{2(n-i)+k-j+1}  \right)^{\mub} \\
&\le& 2^{\mub -1} \sum _{0\le j\le k}\sum_{0\le i\le n}  \left( \frac{1}{ (2i+j+1)^{\mub} }  + \frac{1}{ (2(n-i)+k-j+1)^{\mub} }  \right)  \le 2^{\mub} \sum_{j\ge 0}\sum_{i\ge 0} \frac{1}{ (2i+j+1)^{\mub} } <\infty ,
\end{eqnarray*}
where we used the fact that ${\mub} =\mu  - q >2$. 

It follows that 
\[ 
I\le 2^{\mub} 
\left( \sum_{j\ge 0} \sum_{i\ge 0} \frac{1}{ (2i+j+1)^{\mub}} \right) \frac{1}{ (2n+k+1)^{\mub}}
=2^{\mu -q} \left( \sum_{j\ge 0}\sum_{i\ge 0} \frac{1}{ (2i+j+1)^{\mu -q}} \right) 
\frac{  (2n+k+1)^q }{ (2n+k+1)^\mu },
\] 
and hence the proof of Lemma \ref{lem3} is complete once we have
noticed that $ (2n+k) !(2n+k+1)^{q}\leq  (2n+k+q) !$. \qed


Let us go back to the proof of Proposition \ref{prop10}. Pick any number $\mu >3$. We shall prove by induction on 
$n\in \N$ that 
\be
\vert \partial _x^k\partial _t ^n y(0,\tau)\vert \le C_n \frac{(2n+k)!}{R^k R'^{2n} (2n+k+1)^\mu }, \quad  \forall k\in \N ,\label{D20} 
\ee
where $0<C_n \le C'< +\infty$. 
For $n=0$, using the fact that $R<\widetilde R$, we have that 
\[
\vert \partial _x^k y(0,\tau)\vert  
= \vert  a_k \vert  
\le  C\frac{k ! }{ \widetilde R ^k} 
\le  C \left(\sup_{p\in \N} (\frac{R}{\widetilde R})^p (p+1)^\mu \right) \frac{k ! }{ R^k(k+1)^\mu } 
\le  C_0\frac{k ! }{ R^k (k+1)^\mu } 
\]
provided that 
\be
\label{cond1}
C\le\widetilde C = \left(\sup_{p\in \N} (\frac{R}{\widetilde R })^p (p+1)^\mu \right) ^{-1} C_0.
\ee
Assume that \eqref{D20} is satisfied at the rank $n\in \N$ for some constant  $C_n>0$. 
Then, by \eqref{W1}, \eqref{AB2}, we have that 
\begin{eqnarray}
\vert \partial _x^k \partial _t ^{n+1} y(0,\tau ) \vert 
&=& \vert \partial _x ^k \partial _t ^n 
\big( \partial _x^2 y + \sum_{p,q,r\in \N} 
a_{p,q,r} y^p(\partial _x y)^qx^r \big) (0,\tau)\vert \nonumber \\
&=& \vert \partial _x ^k \partial _t ^n 
\big( \partial _x^2 y + \sum_{p,q\in \N} 
A_{p,q}(x) y^p(\partial _x y)^q \big) (0,\tau)\vert \nonumber \\
&\le& \vert \partial _t^n \partial _x ^{k+2} y(0,\tau )\vert
+\sum_{p\ge 1} 
\vert \partial _x^k\partial _t^n \big( A_{p,0}(x) y^p\big) (0,\tau)\vert \nonumber \\
&&\quad 
+
\sum _{q\ge 1}\sum_{p\ge 0} 
\vert \partial _x^k\partial _t^n \big( A_{p,q}(x) y^p (\partial _x y)^q \big) (0,\tau)\vert \nonumber \\
&=:& I_1+I_2+I_3. \label{E1}
\end{eqnarray}
(Note that the sum for $I_2$ is over $p\ge 1$, for $A_{0,0}(x)=0$.)

Since $R'<R$, we can pick some $\varepsilon \in (0,1)$ such that
\[
R'^2\le (1-\varepsilon) R^2. 
\]
For $I_1$, we have that 
\be
I_1 \le C_n\frac{(2n+k+2)!}{R^{k+2} R'^{2n} (2n+k+3)^\mu   }\le 
(1-\varepsilon ) C_n\frac{(2n+k+2)!}{R^{k} R'^{2n+2} (2n+k+3)^\mu} \cdot
\label{E2}
\ee

Since $A_{p,q}$ does not depend on $t$, we have that 
$\partial _x^k\partial _t^nA_{p,k}=0$ for $n\ge 1$ and $k\ge 0$. Next, for $k\ge 0$, we have that 
\[
\vert \partial _x ^k A_{p,q}(0)\vert =k!\, \vert a_{p,q,k}\vert \le \frac{k !}{b_2^k}\, \frac{M}{b_0^p b_1^q}
\le \frac{\overline{C}\, k!}{(k+1)^\mu R^k b_0^p b_1^q}
\]
for some constant $\overline{C}>0$ depending on $R$, $b_2$, $\mu$, for $R<b_2$.  

Note that, still by  \eqref{D20}, the function $\partial _x y$ satisfies the estimate
\[
\vert \partial _x^k\partial _t ^n (\partial _x  y)(0,\tau)\vert \le \frac{C_n}{R}
 \frac{(2n+k +1)!}{R^k R'^{2n} (2n+k+2)^\mu }, \quad  \forall k\in \N . 
\]

Using $\mu -1>2$, we infer from iterated applications of Lemma \ref{lem3}  that 
\ba
\left\vert 
\partial_x ^k\partial_t^n \big( A_{p,0}y^p\big) (0, \tau ) 
\right\vert 
&\le& \frac{\overline{C} C_n^p K^{p} (2n+k)!}{R^k{R'}^{2n} (2n+k+1)^\mu b_0^p} ,
\label{E3zero}\\
\left\vert 
\partial_x ^k\partial_t^n \big( A_{p,q}y^p(\partial_x y)^q\big) (0, \tau ) 
\right\vert 
&\le& \frac{\overline{C} C_n^{p+q} K^{p+q} (2n+k+1)!}{R^k{R'}^{2n} (2n+k+1)^\mu b_0^pb_1^qR^q}
\quad \forall q\ge 1, 
\label{E3}
\ea
where we denote $K:=\max (K_{0, \mu }, K_{1, \mu })$.
We infer from \eqref{E3zero}-\eqref{E3}  that
\ba
I_2 &\le& \sum_{p\ge 1} \frac{\overline{C} C_n^p K^p (2n+k)!}{R^k{R'}^{2n} (2n+k+1)^\mu b_0^p}, \label{E4}\\
I_3 &\le& \sum_{q\ge 1} \sum_{p\ge 0} \frac{\overline{C} C_n^{p+q} K^{p+q} (2n+k+1)!}{R^k{R'}^{2n} (2n+k+1)^\mu 
b_0^pb_1^qR^q}\cdot  \label{E5}
\ea
Using \eqref{E1}-\eqref{E2} and \eqref{E4}-\eqref{E5}, we see that  the condition 
\[
\vert \partial _x^k\partial _t ^{n+1} y(0,\tau)\vert \le C_{n+1} \frac{(2n+k+2)!}{R^k R'^{2n+2} 
(2n+k+3)^\mu }, \quad  \forall k\in \N ,
\]
is satisfied provided that 
\begin{multline}
(1-\varepsilon)C_n
+\sum_{p\ge 1} \frac{\overline{C}R'^2}{(2n+k+1)(2n+k+2)} \left( \frac{2n+k+3}{2n+k+1}\right) ^\mu  (\frac{C_nK}{b_0})^p \\
+\sum_{q\ge 1}\sum_{p\ge 0} \frac{\overline{C}R'^2}{(2n+k+2)} 
\left( \frac{2n+k+3}{2n+k+1}\right) ^\mu   (\frac{C_nK}{b_0})^p  (\frac{C_nK}{b_1R})^q   
\le 
C_{n+1}\cdot \label{E100}
\end{multline}
Pick a number $\delta\in (0,1)$. Assume that
\be 
C_n\le \delta \cdot \min (\frac{b_0}{K}, \frac{b_1R}{K}),
\label{E150}
\ee 
so that $C_nK/b_0\le \delta$ and $C_nK/(b_1R)\le \delta$.
Set
\begin{eqnarray*}
C_{n+1}&=& \lambda _nC_n:= [(1-\varepsilon )   + \frac{K}{b_0} 
\frac{\overline{C}R'^2}{(2n+1)(2n+2)} \frac{3^\mu }{1-\delta} 
+ \frac{K}{b_1R} \frac{\overline{C}R'^2}{(2n+2)} \frac{3^\mu}{(1-\delta )^2}  ]C_n  \cdot
\end{eqnarray*}
Then, with this choice of $C_{n+1}$,  \eqref{E100} holds provided that \eqref{E150} is satisfied. 
Next, one can pick some $n_0\in \N$ such that for
$n\ge n_0$,  we have 
\[
\lambda _n\le 1 .
\]  
This yields $C_{n+1}\le C_n$ for $n\ge n_0$, provided that \eqref{E150} holds for $n=n_0$. To ensure \eqref{E150} for $n=0,1,..., n_0$, it is sufficient to choose $C_0$ small enough (or, equivalently, $\widetilde C$ small enough) so that 
\[
\max \big( C_0, \lambda _0C_0, \lambda_1\lambda _0C_0, ..., \lambda_{n_0-1}\cdots \lambda _0
C_0 \big) \le \delta \cdot \min ( \frac{b_0}{K}, \frac{b_1R}{K}) \cdot
\] 
The proof by induction of \eqref{D20} is achieved. 

We can pick 
\[
C'(C,R,R') := \max \big( C_0, \lambda _0C_0, \lambda_1\lambda _0C_0, ..., \lambda_{n_0-1}\cdots \lambda _0\, C_0 \big)
\] 
with 
$C_0=C \sup_{p\in \N} (\frac{R}{\widetilde R })^p (p+1)^\mu$, so that $C'(C,R,R')\to 0$ as $C\to 0$.  
The proof of Proposition \ref{prop10} is complete. 
\end{proof}


\begin{proposition}
\label{prop100}
Let $-\infty < t_1\le \tau \le t_2<+\infty$ and $f=f(x,y_0,y_1)$ be as in \eqref{AB1}-\eqref{AB2} with the coefficients $a_{p,q,r}$, $(p,q,r)\in \N ^3$,
satisfying \eqref{AB3}-\eqref{AB4}.  Assume in addition that $b_2> \hat R := 4e^{(2e)^{-1}}\approx 4.81$.
Let $\tilde R>\hat R$. 
Then there exists some number $\widetilde C>0$ such that 
for any $C\in (0,\widetilde C ]$ and any numbers $R,R', L$ with $\hat R<R'<R<\min (\tilde R, b_2)$ and $4e^{e^{-1}} /R'^2 < L< 1/4$,  there exists a number $C''=C''(C,R,R', L)>0$ with 
$\lim_{C\to 0^+} C''(C,R,R',L)=0$ such that for any $y^0\in {\mathcal R} _{\tilde R,C}$, we can pick a function $y\in G^{1,2}( [-1,1]\times [t_1,t_2] )$ 
satisfying  \eqref{A1} for $(x,t)\in [-1,1]\times [t_1,t_2]$ and 
\be
\label{F1}
y(x,\tau ) =y^0(x)=\sum_{n=0}^\infty a_n \frac{x^n}{n!}, \quad \forall x\in [-1,1], 
\ee
and such that for all $t\in [t_1,t_2]$ 
\ba
\label{F2}
\left\vert 
\partial _t^n y(0, t)
\right\vert \le C''  L ^n (n!)^2, \\
\label{F3}
\left\vert 
\partial _x \partial _t^n y(0, t)
\right\vert \le C'' L^n (n!)^2.
\ea  
\end{proposition}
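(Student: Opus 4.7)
My plan is to reduce the evolution problem for \eqref{A1} with analytic initial datum $y^0$ to the $x$-Cauchy problem solved by Theorem \ref{thm2}, passing through a Borel-type extension of formally defined time-jets at $x=0$, $t=\tau$. First I will use the recursion of Lemma \ref{lem1} to define sequences $(\beta_n)_{n \ge 0}$ and $(\gamma_n)_{n \ge 0}$ from the Taylor coefficients $(a_n)_{n \ge 0}$ of $y^0$: set $\beta_0 = a_0$, $\gamma_0 = a_1$, and for $n \ge 1$
$$\beta_n := a_{2n} + H_n(0, a_0, \ldots, a_{2n-1}), \qquad \gamma_n := a_{2n+1} + \tilde H_n(0, a_0, \ldots, a_{2n}).$$
These are the only possible values of $\partial_t^n y(0, \tau)$ and $\partial_t^n \partial_x y(0,\tau)$ for any $C^\infty$ solution of \eqref{A1} with $y(\cdot, \tau) = y^0$. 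The inductive estimate carried out in the proof of Proposition \ref{prop10} depends on $y^0$ only through this recursion and therefore also applies to these formal jets, yielding
$$|\beta_n| \le C' \frac{(2n)!}{R'^{2n}}, \qquad |\gamma_n| \le C' \frac{(2n+1)!}{R R'^{2n}}, \qquad \forall n \in \N,$$
with $C'(C, R, R') \to 0$ as $C \to 0^+$.

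Since $(2n)! \le 4^n (n!)^2$, both sequences are Gevrey of order $2$ with type $4/R'^2$. I will then invoke a Borel-type theorem (as in \cite{petzsche,MRRreachable}) to produce $g_0, g_1 \in G^2([t_1, t_2])$ with $g_0^{(n)}(\tau) = \beta_n$, $g_1^{(n)}(\tau) = \gamma_n$, and pointwise bounds $|g_i^{(n)}(t)| \le C'' L^n (n!)^2$ uniformly in $t \in [t_1, t_2]$ and $i \in \{0, 1\}$, with $C'' \to 0$ as $C \to 0$. The loss factor inherent in the Petzsche construction is precisely what forces the hypothesis $L > 4 e^{e^{-1}}/R'^2$. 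The constraint $L < 1/4$ then allows an application of Theorem \ref{thm2} (with $R = 1/L > 4$ in its notation), producing a solution $y \in G^{1,2}([-1,1] \times [t_1, t_2])$ of \eqref{C1}-\eqref{C3} with boundary traces $y(0, \cdot) = g_0$ and $\partial_x y(0, \cdot) = g_1$.

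It remains to check that $y(x, \tau) = y^0(x)$ on $[-1, 1]$. By construction, $\partial_t^n y(0, \tau) = g_0^{(n)}(\tau) = \beta_n$ and $\partial_t^n \partial_x y(0, \tau) = g_1^{(n)}(\tau) = \gamma_n$. Proposition \ref{prop1} asserts that the space-jet $(\partial_x^n y(0, \tau))_{n \ge 0}$ is determined by these two time-jets through the very recursion used to define $(\beta_n, \gamma_n)$ from $(a_n)$; this recursion is triangular and hence invertible, forcing $\partial_x^n y(0, \tau) = a_n$ for every $n$. Since Theorem \ref{thm2} guarantees that $y(\cdot, \tau)$ lies in a Gevrey-$1$ class in $x$, it is real-analytic on $[-1,1]$, and we conclude $y(x, \tau) = y^0(x)$. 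The bounds \eqref{F2}--\eqref{F3} follow at once from $y(0, t) = g_0(t)$, $\partial_x y(0, t) = g_1(t)$ and the Borel estimates on $g_0, g_1$. The main technical point will be controlling the Borel loss quantitatively so as to reconcile the Gevrey-$2$ type $4/R'^2$ coming from Proposition \ref{prop10} with the smallness threshold required by Theorem \ref{thm2}; tracking this compatibility is what produces the explicit constant $\hat R = 4 e^{(2e)^{-1}}$ and the admissible range $4 e^{e^{-1}}/R'^2 < L < 1/4$ stated in the proposition.
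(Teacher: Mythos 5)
Your proposal is correct and follows essentially the same route as the paper: define the formal time-jets at $(0,\tau)$ from the coefficients $a_n$ via the recursion of Proposition \ref{prop1}, bound them by the induction of Proposition \ref{prop10}, extend them to Gevrey-$2$ traces $g_0,g_1$ by the Borel lemma (Lemma \ref{lem4}, with the $e^{e^{-1}}$ loss accounting for the constraint $L>4e^{e^{-1}}/R'^2$), solve the sideways Cauchy problem by Theorem \ref{thm2}, and recover $y(\cdot,\tau)=y^0$ from the invertibility of the jet correspondence plus analyticity in $x$. Your explicit remark that matching all $x$-derivatives at $x=0$ must be combined with the $G^1$ regularity in $x$ to conclude $y(x,\tau)=y^0(x)$ on all of $[-1,1]$ is a point the paper leaves implicit.
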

\begin{proof}
Let $\hat R:=4 e^{(2e)^{-1}}$, $\tilde R>\hat R$ and $R,R'$ with $\hat R<R'<R <\min (\tilde R, b_2)$. 
Pick $\widetilde C,C$ as in Proposition \ref{prop10}, and pick any $y^0\in {\mathcal R} _{\tilde R,C}$.
If a function $y$ as in Proposition \ref{prop100} does exists, then both sequences of numbers 
\begin{eqnarray*}
d_n &:=& \partial _t^n y(0,\tau ), \quad n\in \N, \\
\tilde d_n &:=& \partial _x \partial _t^n y (0,\tau ), \quad n\in \N
\end{eqnarray*}
can be computed inductively in terms of the coefficients $a_n=\partial _x ^n y^0 (0)$, $n\in \N$, according to Proposition \ref{prop1}. 
Furthermore, it follows from Proposition \ref{prop10} (see \eqref{D21}-\eqref{D22})   that we have for some $C'=C'(C,R,R')>0$ and all 
$n\in \N$ 
\begin{eqnarray*}
|d_n|&\le& C'\frac{(2n)!}{R'^{2n}}, \\
|\tilde d_n | &\le& C'\frac{(2n+1)!}{ R R'^{2n}}. 
\end{eqnarray*}

Note that both sequences $(d_n)_{n\in \N}$ and $(\tilde d_n)_{n\in \N}$ (as above) can be defined in terms of the coefficients $a_n$'s, 
even if the existence of the function $y$ is not 
yet established. 

Let $L\in (\frac{4e^{e^{-1}}}{R'^2}, \frac{1}{4})$,  $R''\in (\sqrt{ \frac{4e^{e^{-1} }} {L} }, R')$
and $M=M(R,R',R'') >0$ such that 
\[
|\tilde d_n | \le M C'\frac{(2n)!}{ R''^{2n}}, \quad \forall n\in \N. 
\] 
The following lemma is a particular case of \cite[Proposition 3.6]{MRRreachable} (with $a_p=[2p(2p-1)]^{-1}$ for 
$p\ge1$).  
\begin{lemma}
\label{lem4}
Let $(d_q)_{q\ge 0}$ be a sequence of real numbers such that  
\[
|d_q|\le C H^k (2q)!\quad \forall q\ge 0  
\]
for some $H>0$ and $C>0$. Then for all $\tilde H>e^{e^{-1}}H$ there exists a function $f\in C^\infty(\R )$ such that 
\ba
f^{ (q) } (0) &=& d_q\quad \forall q\ge 0, \\
| f^{ (q) } (t) |&\le&  C \tilde H^q (2q)! \quad \forall q\ge 0, \ \forall t\in \R .  
\ea 
\end{lemma}
Pick $H=1/R''^2$ and $\tilde H=L/4>e^{e^{-1}}H$.
Then by Lemma \ref{lem4}, there exist two functions $g_0,g_1\in G^2( [-1,1] )$
such that 
\ba
g_0^{(n)} (0) &=& d_n, \quad n\ge 0,\label{K1}\\
g_1^{(n)} (0) &=& \tilde d_n, \quad n\ge 0, \label{K2}\\
|g_0^{(n)} (t) | &\le&  C' \tilde H^n (2n)!, \quad n\ge 0,\ t\in [-1,1], \label{K3}\\
|g_1^{(n)} (t) | &\le& M C' \tilde H^n  (2n)! , \quad n\ge 0,\ t\in [-1,1]. \label{K4}
\ea 
It follows at once from Stirling' formula that $(2n)!\le C_s 4^n (n!)^2$ for some universal constant $C_s>0$, so that (with $4\tilde H=L$)
\ba
|g_0^{(n)} (t) | &\le&  C' C_s(4\tilde H)^n (n!)^2, \quad n\ge 0,\ t\in [-1,1], \label{K33}\\
|g_1^{(n)} (t) | &\le& M C' C_s (4\tilde H)^n  (n!)^2 , \quad n\ge 0,\ t\in [-1,1]. \label{K44}
\ea 
Note that $4\tilde H =L<1/4$. 
If $C$ is sufficiently small, then $C'$ is as small as desired, and 
it follows then from Theorem \ref{thm2}  that we can pick a function $y\in G^{1,2}([-1,1]\times [t_1,t_2])$ satisfying 
\eqref{C1}-\eqref{C3}. Using again Proposition \ref{prop1}, we infer that 
$\partial _x^k y (0,\tau )= a_k= \partial _x^k y^0(0)$ for all $k\ge 0$, and hence \eqref{F1} holds. 
The estimate \eqref{F2}-\eqref{F3}  follow from \eqref{C2}-\eqref{C3} and \eqref{K33}-\eqref{K44} with $L=4\tilde H$
and $C''=\max (C'C_s, MC'C_s)$.  
The proof of  Proposition \ref{prop100}  is complete.
\end{proof}

\section{Proofs of the main results.}
\label{section4}
Let us start with the proof of Theorem \ref{thm1}. 
Let $R>\hat R = 4e^{(2e)^{-1}}$ and let $\hat C$ be (for the moment) the constant $\tilde C$ given by Proposition \ref{prop100}. Pick any $y^0,y^1\in {\mathcal R} _{R, \hat C}$. 
We infer from Proposition \ref{prop100}  applied with $[t_1,t_2] =[0,T]$ and $\tau =0$ (resp. $\tau =T$) the existence of 
two functions $\hat y,\tilde y \in G^{1,2}([-1,1]\times [0,T])$ satisfying \eqref{C1} 
and such that 
\[ \hat y(x,0)=y^0(x) \ \textrm{ and } \  \tilde y(x,T)=y^1(x), \quad \forall x\in [-1,1].\]
Let $\rho\in C^\infty (\R )$ be such that
\[
\rho (t) =\left\{
\begin{array}{ll}
1 & \textrm{ if } t \le \frac{T}{4} , \\[3mm]
0 & \textrm{ if } t \ge \frac{3T}{4} ,  
\end{array}
\right.
\]
and $\rho _{\vert [0,T]}  \in G^{\frac{3}{2}}([0,T])$. 
Let 
\begin{eqnarray*}
g_0(t)&=& \rho (t) \hat y (0,t) + (1-\rho (t)) \tilde y(0,t), \quad t\in [0,T], \\
g_1(t)&=& \rho (t) \partial _x \hat y (0,t) + (1-\rho (t)) \partial _x \tilde y(0,t), \quad t\in [0,T].
\end{eqnarray*}
Then by \cite[Lemma 3.7]{MRRreachable} $g_0,g_1\in G^2([0,T])$ and using \eqref{F2}-\eqref{F3} and picking a smaller value of $\hat C$ if necessary, we can assume 
that \eqref{AA} is satisfied with $R=1/L$. It follows then from Theorem \ref{thm2} that there exists a solution $y\in G^{1,2}([-1,1]\times [0,T])$
of \eqref{C1}-\eqref{C3}. The control inputs $h_{-1}$ and $h_1$ are defined by using \eqref{W2}-\eqref{W3}. Then $y$ satisfies \eqref{W1}-\eqref{W4} together with $y(x,T)=y_1(x)$ for $x\in [-1,1]$.  
Indeed, since $\rho (t)=0$ for $t>3T/4$, we have 
\begin{eqnarray*}
\partial _t ^n y(0, T)=g_0^{ (n) } (T ) &=&\partial _t^n \tilde y(0,T),\quad \forall n\in \N, \\
\partial _x \partial _t ^n y(0, T)=g_1^{ (n) } (T) &=&\partial _x \partial _t^n \tilde y(0,T),\quad \forall n\in \N .
\end{eqnarray*}
It follows then from Proposition \ref{prop1} that  $\partial _x ^ny(0,T)=\partial _x ^n \tilde y (0,T)=\partial _x^n y^1(0)$ 
for all $n\in \N$, and hence $y(.,T)=y^1$. The proof of \eqref{W4} is similar.
The proof of Theorem \ref{thm1} is achieved.\qed \\

Let us now proceed to the proof of Corollary \ref{cor1}. Pick any solution $y=y(x,t)$ for  $x\in [-1,1]$ and $t\in [t_1,t_2]$  of \eqref{A1}, and 
set $y^0(x)=y(x,\tau )$ where $\tau\in [t_1,t_2]$. Assume that $y^0(-x)=-y^0(x)$ for $x\in [-1,1]$. The following claims are needed. \\[3mm]
{\sc Claim 1}. For all $n\ge 0$, there exists a smooth function $\widehat H_n$ such that we have $\partial _x^n [\partial _x^2y + f(x,y,\partial _x y) ] =\widehat H_n (x,y,\partial _x y , ...,  \partial _x ^{n+2}y)$,
where 
\[
\widehat H _n (-x,-y_0,y_1,-y_2, ..., (-1)^{n+1} y_{n+2} ) = (-1)^{n+1}\widehat H_n (x,y_0,y_1, ..., y_{n+2}).  
\] 
The proof is by induction on $n\ge 0$. 
Claim 1 is obvious for $n=0$ (take $\widehat H_0(x,y_0,y_1,y_{2})=y_{2}+f(x,y_0,y_1)$), 
and if it is true for some $n\in \N$, then 
\begin{eqnarray*}
\partial _x^{n+1} [\partial _x^2y + f(x,y,\partial _x y) ] 
&=& \partial _x \partial _x^n [\partial _x^2y + f(x,y,\partial _x y) ] \\
&=&\partial _x  [\widehat H_n (x,y,\partial _x y , ...,  \partial _x ^{n+2}y)]\\
&=& \partial _x \widehat H_n (x,y,\partial _x y , ...,  \partial _x ^{n+2} y) 
+ \sum_{k=0}^{n+2} \partial _{y_k} \widehat H_n (x,y,\partial _x y , ...,  \partial _x ^{n+2}y) \partial _x^{k+1}y\\
&=:& \widehat H_{n+1} (x,y,\partial _x y,..., \partial _x^{n+2} y, \partial _x ^{n+3} y). 
\end{eqnarray*}
Then it can be seen that 
\[
\widehat H _{n+1} (-x,-y_0,y_1,-y_2, ..., (-1)^{n+1} y_{n+2} , (-1)^{n+2} y_{n+3}) = (-1)^{n+2}\widehat H_{n+1} (x,y_0,y_1, ..., y_{n+3}).  
\] 
Our second claim is concerned with the function $H_n$ in Lemma \ref{lem1}. \\[3mm]
{\sc Claim 2}. For all $n\ge 1$ we have $H_n ( -x, -y_0 , y_1 ,....,  (-1)^{2n-1}y_{2n-1} ) = - H_n(x , y_0 , y_1 ,...,  y_{2n-1})$. \\
We prove Claim 2 by induction on $n\ge 1$. For $n=1$, the result is obvious, for $H_1(x,y_0,y_1)=f(x,y_0,y_1)$. Assume the result true at rank $n-1 \ge 1$. Then  we infer from \eqref{B1} and \eqref{BBB1} that 
\begin{eqnarray*}
H_n(x,y,\partial _x y, ..., \partial _x ^{2n-1} y)&=& \partial _x ^{2n-2}  [ \partial _x^2 y  +  f(x,y,\partial _xy)] -\partial _x^{2n} y \\
&&\quad +\sum_{k=0}^{2n-3} 
\frac{\partial H_{n-1}}{\partial y_k} (x,y,\partial_x y , ... , \partial _x ^{2n-3} y)\partial _x ^k (\partial _x^2 y + f(x,y,\partial _x y))\\
&=&  \widehat H_{2n-2} (x,y, \partial _x y , ...,   \partial _x ^{2n}  y) -\partial _x ^{2n} y \\
&&\quad +\sum_{k=0}^{2n-3} 
\frac{\partial H_{n-1}}{\partial y_k} (x,y,\partial_x y , ... , \partial _x ^{2n-3} y) 
\widehat H_k (x,y, \partial _x y , ..., \partial _x^{k+2} y).
\end{eqnarray*}
Using Claim 1 and the induction hypothesis, one readily sees that
\[
H_n ( -x, -y_0 , y_1 ,....,  (-1)^{2n-1}y_{2n-1} ) = - H_n(x , y_0 , y_1 ,...,  y_{2n-1}). 
\]
Claim 2 is proved. \\[3mm]
{\sc Claim 3}. $\partial _t^n y(0,\tau)=0\ \  \forall n\in \N $. \\
Note that the result is true for  $n=0$, for $y(0,\tau )=y^0(0)=0$. By Claim 2, we have 
\begin{eqnarray*}
\partial _t^n y (0, \tau ) 
&=&\partial _x ^{2n} y (0, \tau ) +H_n(0, y(0, \tau),\partial _x y(0, \tau),  ..., \partial _x^{2n-1} y(0, \tau ))\\
&=& \partial _x^{2n} y^0(0) + H_n(0, y^0(0), \partial _x y^0(0), ..., \partial _x ^{2n-1} y^0(0)).  
\end{eqnarray*}
It is clear that the function $\partial _x^{2n} y^0$ is odd, and it follows from Claim 2 that the function 
\[ x\to H_n(x, y^0(x), \partial _x y^0(x), ..., \partial _x ^{2n-1} y^0(x))\] is odd as well. It follows that 
$\partial _t^n y (0, \tau )=0$.  The proof of Claim 3 is achieved. \\
Let us go back to the proof of Corollary \ref{cor1}. Let us show that $\hat y(0,t)=\tilde y(0,t)=0$ for all $t\in [0,T]$.
Let us consider $\hat y(0,t)$ only, the property for $\tilde y (0,t)$ being similar. The function $\hat y$ is given by Proposition \ref{prop100}. But in the proof of Proposition \ref{prop100}, as $d_n=\partial _t y^n (0, \tau)=0$ for all $n\in \N$, it is sufficient to pick $g_0(t)=0$ for all $t\in [0,T ] $, so that 
$\hat y(0,t)=0$ for $t\in [0,T]$. 
Finally, the function $y=y(x,t)$ for $(x,t)\in [-1,1]\times [0,T]$ given by Theorem \ref{thm1} yields by restriction to $[0,1]\times [0,T]$ 
the solution of the control problem  \eqref{WW1}-\eqref{WW4}. \qed
  
\section*{Appendix: Gevrey regularity of the solution of \eqref{C1}-\eqref{C3} provided in Theorem \ref{thm2}}
Assume that $f$ satisfies \eqref{AB1}-\eqref{AB4}.
Let us show that $y\in G^{1,2}([-1,1]\times [t_1,t_2])$. Pick any numbers $R_1,R_2$ such that 
$4/e<R_1<R_2$, and let us prove that there exists some constant $M>0$ such that \eqref{AAAAA} holds. 
To this end, picking any $\mu >3$, we prove by induction on $l\in \N$ that 
\ba
\vert \partial _x^k \partial _t ^n y(x,t) \vert &\le& C_l \frac{ (2n+k)!}{R_1^kR_2^{2n} (2n+k+1)^\mu} 
\quad \forall (x,t)\in [-1,1]\times [t_1,t_2], \ \forall n \in \N , 
\label{TT1}
\ea
for $l\in \N$ and $k\in \{2l , 2l+1 \}$, with $\sup_{l\in \N} C_l <\infty$.  
Let us start with $l=0$. Then \eqref{TT1} reads
\begin{eqnarray}
\vert \partial _t ^n y(x,t) \vert &\le& C_0 \frac{ (2n)!}{R_2^{2n} (2n+1)^\mu},\label{TT3}\\
\vert  \partial _x \partial _t ^n y(x,t) \vert &\le& C_0 \frac{ (2n+1)!}{R_1R_2^{2n} (2n+2)^\mu} \label{TT4}
\end{eqnarray}
for $(x,t)\in [-1,1]\times [t_1,t_2]$ and $n\in \N$. 

We already know that $y\in C^\infty ([-1,1]\times [t_1,t_2] ) $ and that $(y,\partial _x y)\in C ( [-1,1] ,X_{s_0} )$
for some $s_0\in (0,1)$, i.e.  $(y,\partial _x y)\in C( [-1,1],{\mathcal X}_{L_0} )$ with 
$L_0=L(s_0)=e^{2(1-s_0)}L_1\le e^2L_1< (e/2)^2$. 
Thus we have for some constant $C>0$ and for all $n\in \N$ and all $(x,t)\in [-1,1]\times [t_1,t_2]$, 
\begin{eqnarray*}
\vert \partial _t ^{n+1} y(x,t) \vert &\le & C\, L_0^{| n -\frac{1}{2}|+1} \Gamma (n+ \frac{1}{2})^2 (1+n)^{-2},
 \\
\vert \partial _x \partial _t ^{n+1} y(x,t) \vert  &\le & C\,   L_0^{n+1} (n!)^2 (1+n)^{-2}.
\end{eqnarray*}  
Using the estimate $\Gamma (n+\frac{1}{2})\sim \Gamma (n+1)(n+\frac{1}{2})^{-\frac{1}{2}}$ and the estimate
$(n!)^2\sim \sqrt {\pi n } (2n)! / 2^{2n}$ that follows from Stirling formula, we infer the existence of a universal 
constant $C_0>0$ such that  \eqref{TT3}-\eqref{TT4} hold for some constants $R_1,R_2$ with 
$4/e <R_1<R_2<\sqrt{4/L_0}$.  

Assume now that \eqref{TT1} is true for all $k\in \{ 0 , 1, ..., 2l+1 \}$ for some $l\in \N $. Let us pick $k\in \{ 2l, 2l+1\}$, and let us
check that \eqref{TT1} is true for $k+2\in \{ 2l+2, 2l+3\}$. Then 
\begin{eqnarray*}
\vert \partial _x ^{k+2} \partial _t ^n y(x,t)\vert 
&=& \vert \partial _x^k \partial _t^n \partial _x^2 y\vert \\
&=& \vert \partial _x^k \partial _t^n (\partial _t y - f(x,y,\partial _x y)\vert \\
&\le& \vert \partial _x^k \partial _t ^{n+1} y \vert + \sum_{p\ge 1} \vert \partial _x^k\partial _t^n (A_{p,0} (x)y^p)\vert + 
\sum_{q\ge 1} \sum_{p\ge 0} \vert \partial _x^k \partial _t^n (A_{p,q} (x) y^p (\partial _x y)^q)\vert \\
&=:& I_1 + I_2 + I_3. 
\end{eqnarray*}  
Then 
\[
I_1 \le C_l \frac{(2n+2+k)!}{R_1^kR_2^{2n+2} (2n+k+3)^\mu } 
=C_l \left( \frac{R_1}{R_2}  \right) ^2  
\frac{(2n+2+k)!}{R_1^{k+2}R_2^{2n} (2n+k+3)^\mu }\cdot
\]
On the other hand, we have as in the proof of Proposition \ref{prop10} that for some positive constant $K=K(\mu )$
\begin{eqnarray*}
\vert \partial _x ^k\partial _t ^n (A_{p,0} (x) y^p )\vert 
&\le&
\frac{\overline{C} C_l^{p}  K^{p} (2n+k)! }{ R_1^k R_2^{2n} (2n+k+1)^\mu b_0^p } ,\\
\vert \partial _x ^k\partial _t ^n (A_{p,q} (x) y^p (\partial _x y) ^q)\vert 
&\le& 
\frac{\overline{C} C_l^{p+q}  K^{p+q} (2n+k+1)! }{ R_1^k R_2^{2n} (2n+k+1)^\mu  b_0^p b_1^q R_1^q}
\quad \forall q\ge 1.
\end{eqnarray*}
This yields 
\[
I_2+I_3\le 
\sum_{p\ge 1} \frac{\overline{C} C_l^p K^p (2n+k)!}{R_1^k R_2^{2n} (2n+k+1)^\mu b_0^p}
+\sum_{q\ge 1}\sum_{p\ge 0} \frac{\overline{C} C_l^{p+q} K^{p+q} (2n+k+1)!}{R_1^k R_2^{2n} (2n + k+1)^\mu
 b_0^p b_1^qR_1^q}
\cdot
\]
The desired estimate
\be
\vert \partial _x ^{k+2}\partial _t ^n y (x,t)\vert \le C_{l+1} \frac{ (2n+k+2) ! }{R_1^{k+2} R_2^{2n} (2n + k+3)^\mu}  
\label{TT8}
\ee
is satisfied provided that 
\begin{eqnarray}
&&\left( \frac{R_1}{R_2}  \right) ^2  C_l
+ \overline{C} R_1^2 \sum_{p\ge 1}  \frac{1}{(2n+k+1)(2n+k+2)} 
\left( \frac{2n+k+3}{2n+k+1} \right) ^\mu 
\left( \frac{C_lK}{b_0}  \right) ^p \nonumber\\
&&\quad+ \overline{C} R_1^2  \sum_{q\ge 1} \sum_{p\ge 0}  \frac{1}{2n+k+2} 
\left( \frac{2n+k+3}{2n+k+1} \right) ^\mu 
\left( \frac{C_lK}{b_0}  \right) ^{p}  \left( \frac{C_lK}{b_1R_1}  \right) ^{q}  
\le  C_{l+1}\cdot \label{TT10}
\end{eqnarray}
We assume that for some number $\delta \in (0,1)$,
\be
\label{YYY1}
C_l\le \delta \cdot \min (\frac{b_0}{K}, \frac{b_1R_1}{K}). 
\ee
We set 
\[
C_{l+1}:=\lambda_l C_l := [ \left( \frac{R_1}{R_2}  \right) ^2    +   \frac{K}{b_0}
  \frac{\overline{C} R_1^2}{(2l+1)(2l+2)} \frac{3^\mu }{1-\delta} + \frac{K}{b_1}
 \frac{\overline{C} R_1}{2l+2}  \frac{3^\mu }{(1-\delta)^2} ] C_l. 
\]
With this choice, \eqref{TT10}  and \eqref{TT8} are satisfied. 
Since $R_1<R_2$, there exist some number $l_0\in \N$ such that $\lambda _l\le 1$ (and hence $C_{l+1}\le C_l$) for
$l\ge l_0$. For \eqref{YYY1} to be satisfied for all $l\ge 0$, it  remains then to choose $C_0$ sufficiently small so that 
\[
\max (C_0, \lambda _0C_0, \lambda _1\lambda _0C_0, ...., \lambda _{l_0-1} \cdots \lambda _0C_0)\le 
 \delta \cdot \min (\frac{b_0}{K}, \frac{b_1R_1}{K})  \cdot
\]
 
\section*{Acknowledgements}
The first author would like to thank Jean-Michel Coron for introducing him to the use of Gevrey functions in control. The first author was supported by ANR project ISDEEC (ANR-16-CE40-0013). The second author was supported by the ANR project Finite4SoS (ANR-15-CE23-0007).


\end{document}